\DeclareSymbolFont{cyrillic}{T2A}{cmr}{m}{n}
\def\makecyrsymbol#1#2{%
  \begingroup\edef\temp{\endgroup
    \noexpand\DeclareMathSymbol{\noexpand#1}
    {\noexpand\mathalpha}{cyrillic}%
    {\expandafter\expandafter\expandafter
     \calccyr\expandafter\meaning\csname T2A\string#2\endcsname\end}}%
  \temp}
\def\expandafter\calccyr\string\char#1\end{#1}
\newtheoremstyle{dotless}{}{}{\itshape}{}{\bfseries}{}{ }{}
\newtheorem{thm}{Theorem}
\newtheorem{lem}[thm]{Lemma}
\newtheorem{cor}[thm]{Corollary}
\newtheorem{defn}[thm]{Definition}
\newtheorem{prop}[thm]{Proposition}
\theoremstyle{dotless}
\newcommand{\End}{\mathop{\text{End}}}
\newcommand{\Hom}{\mathop{\text{Hom}}}
\newcommand{\Z}{\mathbb{Z}}
\newcommand{\R}{\mathbb{R}}
\newcommand{\Q}{\mathbb{Q}}
\renewcommand{\C}{\mathbb{C}}
\newcommand{\Nm}{\mathrm{Nm}}
\newcommand{\F}{\mathbb{F}}
\newcommand{\id}{\mathop{\mathrm{id}}}
\newcommand{\N}{\mathbb{N}}
\newcommand{\Gal}{\mathop{\mathrm{Gal}}}
\newcommand{\pfrak}{\mathfrak{p}}
\newcommand{\qfrak}{\mathfrak{q}}
\newcommand{\mfrak}{\mathfrak{m}}
\newcommand{\inj}{\hookrightarrow}
\newcommand{\surj}{\twoheadrightarrow}
\newcommand{\Ind}{\mathrm{Ind}}
\newcommand{\triv}{\mathrm{triv}}
\newcommand{\Res}{\mathrm{Res}}
\newcommand{\SL}{\mathrm{SL}}
\newcommand{\GL}{\mathrm{GL}}
\newcommand{\PSL}{\mathrm{PSL}}
\newcommand{\actson}{\curvearrowright}
\newcommand{\mat}[4]{\left(\begin{array}{cc} #1 & #2\\ #3 & #4\end{array}\right)}
\newcommand{\twobytwo}[4]{\mat{#1}{#2}{#3}{#4}}
\newcommand{\iso}{\cong}
\newcommand{\eps}{\varepsilon}
\newcommand{\Frob}{\mathrm{Frob}}
\newcommand{\afrak}{\mathfrak{a}}
\newcommand{\ofrak}{\mathfrak{o}}
\newcommand{\A}{\mathbb{A}}
\newcommand{\I}{\mathbb{I}}
\newcommand{\G}{\mathbb{G}}
\renewcommand{\o}{\ofrak}
\newcommand{\Cl}{\mathrm{Cl}}
\newcommand{\Qbar}{\overline{\Q}}
\renewcommand{\P}{\mathbb{P}}
\renewcommand{\I}{\mathrm{I}}
\newcommand{\tr}{\mathrm{tr}}
\newcommand{\diag}{\mathrm{diag}}
\newcommand{\Fbar}{\overline{\F}}
\newcommand{\nfrak}{\mathfrak{n}}
\newcommand{\rhobar}{\overline{\rho}}
\newcommand{\chibar}{\overline{\chi}}
\newcommand{\CM}{\text{CM}}
\renewcommand{\End}{\text{End}}
\newcommand{\opp}{\text{opp.}}
\newcommand{\BT}{\text{BT}}
\renewcommand{\to}{\longrightarrow}
\renewcommand{\inj}{\lhook\joinrel\longrightarrow}
\renewcommand{\surj}{\longrightarrow\mathrel{\mkern-25mu}\longrightarrow}
\newcommand{\shortinj}{\hookrightarrow}
\let\phi\varphi
\let\emptyset\varnothing
\let\@@pmod\pmod
\DeclareRobustCommand{\pmod}{\@ifstar\@pmods\@@pmod}
\def\@pmods#1{\mkern4mu({\operator@font mod}\mkern 6mu#1)}
 \DeclareFontFamily{U}{wncy}{}
    \DeclareFontShape{U}{wncy}{m}{n}{<->wncyr10}{}
    \DeclareSymbolFont{mcy}{U}{wncy}{m}{n}
    \DeclareMathSymbol{\Sha}{\mathord}{mcy}{"58}
\renewcommand{\to}{\longrightarrow}
\renewcommand{\inj}{\lhook\joinrel\longrightarrow}
\renewcommand{\surj}{\longrightarrow\mathrel{\mkern-25mu}\longrightarrow}
\title{Un peu d'effectivit\'{e} pour les vari\'{e}t\'{e}s modulaires de Hilbert-Blumenthal.}
\author{\Large Levent Alp\"{o}ge}
\date{}
\begin{document}

\maketitle

\renewcommand{\abstractname}{Abstract.}
\begin{abstract}
We prove a "height-free" effective isogeny estimate for abelian varieties of $\GL_2$-type.

More precisely, let $g\in \Z^+$, $K$ a number field, $S$ a finite set of places of $K$, and $A,B/K$ $g$-dimensional abelian varieties with good reduction outside $S$ which are $K$-isogenous and of $\GL_2$-type over $\Qbar$. We show that there is a $K$-isogeny $A\rightarrow B$ of degree effectively bounded in terms of $g$, $K$, and $S$ only.

We deduce among other things an effective upper bound on the number of $S$-integral $K$-points on a Hilbert modular variety.
\end{abstract}

\tableofcontents

\section{Introduction.}
The crux of Faltings' Great \cite{faltings} is his proof of the finiteness of isogeny classes of abelian varieties over number fields, from which he deduces a finiteness conjecture of Shafarevich. His proof does not\footnote{Experience with the numerous ineffective theorems of number theory leads one to expect that, in any particular ineffective argument, even if one cannot give a finite-time algorithm to determine the exceptional set that one proves is finite, one should at least be able to give an effective upper bound for its size. However this is not the case for Faltings' proof of the Shafarevich conjecture. On the other hand it \emph{is} the case for his proof of the Mordell conjecture, thanks to Raynaud's effectivization of his isogeny theorem and the very clever idea of Parshin to invoke Mumford's gap principle --- see Szpiro's \cite{szpiro}. There are now of course numerous other ways to proceed.} give an estimate on the size $\#|\mathcal{A}_g(\o_{K,S})|$ of the relevant finite set, and nor do the subsequent effectivizations and improvements of Raynaud, Masser-W\"{u}stholz, Bost, David, Gaudron-R\'{e}mond, and others\footnote{Let us also note work of the Chudnovsky brothers \cite{chudnovsky-brothers} and Koshikawa \cite{koshikawa}. Our list certainly omits important work, and we ask the reader to forgive our ignorance of the literature.}. This is because all known\footnote{We exclude the case of elliptic curves because it is trivialized by Baker's explicit height bounds (since the relevant moduli space is so simple).} proofs of his isogeny theorem give bounds depending on the height of a representative in the relevant isogeny class, a quantity over which one still has no a priori effective control.

The purpose of this paper is to prove a "height-free" isogeny estimate for a special class of abelian varieties, namely those of $\GL_2$-type.

(We have already explained elsewhere why sufficiently accurate study of these abelian varieties suffices for an effective form of the Mordell conjecture for e.g.\ hyperelliptic curves.) We will give an a priori effective isogeny estimate depending only on their dimensions, conductors, and fields of definition. The key point in the argument will be provided by a lemma which greatly simplified the argument in \cite{my-first-effective-mordell-paper}.\footnote{Before writing \cite{my-thesis} it was clear to us that this lemma must be known at least with ineffective implied constant (indeed it is a special case of a theorem of Zarhin which relies on Faltings' finiteness of isogeny classes). However in the context of \cite{my-first-effective-mordell-paper} the lemma can also be phrased as a statement about congruences of Hilbert modular forms, whence it was natural to look in the literature on Hilbert modular forms for such a statement. And exactly such a statement was proven by Mladen Dimitrov \cite{dimitrov} via a technique that we effectivized with a trick. This paper then amounts to "reversing the steps" and using said lemma to deduce something about Faltings' finiteness of isogeny classes.}

Such an isogeny estimate easily gives an a priori estimate on the number of such abelian varieties of bounded dimension, conductor, and field of definition (and their endomorphism rings, and e.g.\ an effective adelic surjectivity statement \`{a} la Serre), as we will explain.

Let us now list the main results in this paper.

Theorem \ref{a priori isogeny estimate for GL2-type abelian varieties} constitutes our "height-free" isogeny estimate for abelian varieties of $\GL_2$-type over a number field.

The resulting effective upper bound for the number of (isomorphism classes of) such abelian varieties is Theorem \ref{upper bound on the number of GL2-type abelian varieties}, from which we of course produce an effective upper bound on the number of $S$-integral points on moduli spaces of such abelian varieties --- see Corollary \ref{upper bound on the number of S-integral K-points on a Hilbert modular variety}.

From said upper bound we deduce effective upper bounds on the number of rational points on a large class of curves (including e.g.\ all hyperelliptic curves over number fields) --- see Corollary \ref{un peu d'effectivite for sirin curves}. Of course by now this is known in a number of ways, and our only interest in this is that our argument follows Szpiro's \cite{szpiro} but avoids Parshin's very clever idea of using Mumford's gap principle by instead invoking our isogeny estimate.

We also prove that the (isomorphism classes of) endomorphism rings of $g$-dimensional abelian varieties of $\GL_2$-type over a number field $K$ and with good reduction outside a finite set $S$ of places of $K$ lie in an effectively computable finite set of possibilities --- see Theorem \ref{the endomorphism ring is one of an explicit finite set of possibilities}.

Finally our adelic surjectivity theorem is Theorem \ref{serre open image theorem for GL2-type abelian varieties}, we effectivize a theorem of M.\ Dimitrov on the residual representations associated to Hilbert modular eigencuspforms in Corollary \ref{an explicit form of a theorem of dimitrov}, and we explain in Appendix \ref{the appendix} how to prove the finiteness of isogeny classes of elliptic curves and fake elliptic curves by reasoning only about Galois representations.

\subsection{Main theorems.}

We prove the following.

\begin{thm}\label{a priori isogeny estimate for GL2-type abelian varieties}
Let $g\in \Z^+$. Let $K/\Q$ be a number field. Let $S$ be a finite set of places of $K$. Then: there is an effectively computable constant $\text{\c{s}}_{g,K,S}\in \Z^+$ depending only on $g$, $K$, and $S$ such that the following holds.
\begin{itemize}
\item Let $A,B/K$ be $g$-dimensional abelian varieties over $K$ which are $K$-isogenous, have good reduction outside $S$, and are of $\GL_2$-type over $\Qbar$. Then: $\Hom_K(A,B)$ is generated as an abelian group by $K$-isogenies of degree $\leq \text{\c{s}}_{g,K,S}$.
\end{itemize}
\end{thm}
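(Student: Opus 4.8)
The plan is to reduce everything to a statement about the residual Galois representations $A[\mathfrak{p}]$ and $B[\mathfrak{p}]$, exploiting the $\GL_2$-type hypothesis to control them via (effective) Chebotarev + ramification bounds. Since $A$ and $B$ are $K$-isogenous, they are in particular isogenous over $\Qbar$, so they carry compatible systems of two-dimensional Galois representations over the coefficient field. Fix a $\Qbar$-simple factor and let $E$ be its field of multiplications; then for each finite place $\lambda$ of $E$ the $\lambda$-adic Tate module gives $\rho_{A,\lambda},\rho_{B,\lambda}\colon \Gal(\overline{K}/K)\to\GL_2(E_\lambda)$, unramified outside $S\cup\{\ell\}$ and with determinant and Frobenius traces controlled. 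The point of \emph{height-freeness} is that, unlike Faltings' semistability/Tate-module argument, we will never need the height of $A$ or $B$: the conductor (i.e.\ the set $S$) and the dimension $g$ bound the ramification of every $A[\mathfrak{p}]$, and that is enough.

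\emph{Step 1: an effective finiteness of residual representations.} First I would invoke (the effective form of) Dimitrov's theorem, promised as Corollary~\ref{an explicit form of a theorem of dimitrov} in the introduction: for all but finitely many $\mathfrak{p}$ (with the exceptional set effectively bounded in terms of $g,K,S$) the residual representation $A[\mathfrak{p}]$ is "large" — e.g.\ its image contains $\SL_2$ of the residue field, up to the usual small-image exceptions — and in particular \emph{absolutely irreducible}. Crucially, the Artin conductor of $A[\mathfrak{p}]$ is bounded purely in terms of $S$ and $g$ (the tame part from $S$, the wild part from $p\le$ something times $e(K/\Q)$), so the fixed field $K(A[\mathfrak{p}])$ has degree over $K$ and discriminant effectively bounded; by Hermite--Minkowski and effective Chebotarev we get effective control of the relevant Frobenius data.

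\emph{Step 2: comparing $A$ and $B$ via a single good prime.} Because $A\sim_K B$, the semisimplifications of $A[\mathfrak{p}]$ and $B[\mathfrak{p}]$ have the same characteristic polynomials of Frobenius at every place outside $S\cup\{p\}$, hence (Brauer--Nesbitt) $A[\mathfrak{p}]^{\mathrm{ss}}\cong B[\mathfrak{p}]^{\mathrm{ss}}$ as $\Gal(\overline K/K)$-modules. Choosing one $\mathfrak{p}=\mathfrak{p}_0$ from the effective large-image range of Step~1 (absolute irreducibility makes semisimplification harmless), we get an honest isomorphism of $\F_{\mathfrak{p}_0}[\Gal]$-modules $A[\mathfrak{p}_0]\xrightarrow{\sim}B[\mathfrak{p}_0]$, i.e.\ a $K$-isogeny $\psi\colon A\to B$ whose kernel is killed by $\mathfrak{p}_0$. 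Its degree is $(\#\F_{\mathfrak{p}_0})^{2}$ up to the factor coming from the number of $\Qbar$-simple factors and the dimension — in any case effectively bounded by $\text{\c{s}}_{g,K,S}$. This produces \emph{one} isogeny of controlled degree; I would then upgrade it to a statement about the whole group $\Hom_K(A,B)$.

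\emph{Step 3: generating $\Hom_K(A,B)$ by small-degree isogenies.} Fix the small isogeny $\psi_0\colon A\to B$ from Step~2. Composition with $\psi_0$ identifies $\Hom_K(A,B)$ with a sub-$\Z$-module of $\End_K(A)\otimes\Q$, and $\End_K(A)$ is an order in a semisimple algebra of $\Q$-dimension $\le 4g$ whose discriminant we can bound (again from $S$ and $g$, using that $\End$ is "seen" inside $\End_{\F_\ell}(A[\ell])$ for a controlled $\ell$, or via the effective bound on $\End$ promised in Theorem~\ref{the endomorphism ring is one of an explicit finite set of possibilities}). Hence $\Hom_K(A,B)$, as a lattice, is generated by elements of bounded "size", and each such element, when composed appropriately with $\psi_0$ or its dual (degree $=(\#\F_{\mathfrak{p}_0})^{2}$), yields an actual $K$-isogeny whose degree is the norm of a small element of a controlled order — hence $\le\text{\c{s}}_{g,K,S}$ after absorbing constants. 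One must take a little care that the generators can be chosen to be isogenies (not merely isogenies composed with multiplication-by-$N$); this is handled by a standard saturation argument, clearing the (bounded) index of $\Z[\psi_0^\vee\psi_0]$-type subrings.

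\emph{The main obstacle} is Step~1 — making Dimitrov's large-image/irreducibility statement genuinely effective with an exceptional set bounded only by $g,K,S$ (and not by any height). Everything downstream (Brauer--Nesbitt, lattice generation) is soft once a single prime $\mathfrak{p}_0$ of controlled size with $A[\mathfrak{p}_0]$ absolutely irreducible is in hand. The effectivization hinges on two inputs: (i) an effective open-image theorem for the compatible system $\{A[\mathfrak{p}]\}$ that quantifies the exceptional primes in terms of the conductor — here the footnoted "trick" effectivizing Dimitrov is what we cite — and (ii) effective Chebotarev (Lagarias--Odlyzko, or GRH-free with a worse bound) applied to the number field $K(A[\mathfrak{p}])$, whose ramification and degree are a priori bounded. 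A secondary subtlety is the passage between $\GL_2$-type over $\Qbar$ and over $K$: one descends to a subfield $K'/K$ of bounded degree where all the multiplications and all $\Qbar$-simple factors are defined, runs the argument there, and then takes invariants/co-invariants under $\Gal(K'/K)$, which only changes degrees by an effectively bounded factor. I expect the write-up to first isolate Step~2 as the key lemma (existence of one small isogeny), then deduce the $\Hom$-statement formally.
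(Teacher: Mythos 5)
Your Step 2 contains the fatal gap, and it is exactly the point where all the work in this theorem has to happen. From $A\sim_K B$ and Brauer--Nesbitt you may indeed conclude $A[\pfrak_0]^{\mathrm{ss}}\cong B[\pfrak_0]^{\mathrm{ss}}$, and absolute irreducibility makes this an isomorphism of $\Gal(\Qbar/K)$-modules; but an isomorphism of mod-$\pfrak_0$ Galois modules is \emph{not} induced by any homomorphism of abelian varieties and does not produce a $K$-isogeny $A\to B$ with kernel killed by $\pfrak_0$. Indeed the mod-$\pfrak_0$ data cannot possibly control the minimal isogeny degree: if $B=A/C$ with $C$ a Galois-stable subgroup of huge order prime to $p_0$, then $A[\pfrak_0]\cong B[\pfrak_0]$ automatically, while the minimal degree of an isogeny $A\to B$ is a priori unbounded. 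So "one small prime with large residual image" gives no small isogeny, and everything downstream (your Step 3) collapses, since it presupposes the small $\psi_0$. The genuine problem is to take the \emph{given} isogeny $\phi\colon A\to B$, whose kernel $G$ may be supported at arbitrarily many primes $p$ and at arbitrarily deep $p$-power level, and show that $G$ is within bounded index of the kernel of an element of $\End_K(A)$.

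That is what the paper does, and it requires strictly more than residual irreducibility: for every $p$ one needs that $\Z_p[\rho_{A,p}(\Gal(\Qbar/K))]$ is the full order $M_2(\o_{F,p})$ (Proposition \ref{large prime lemma}, using the trick $g+(\det g)g^{-1}=(\tr g)\cdot\id$ plus Faltings' lemma to capture $\o_{F,p}$, then Nakayama), and for the finitely many small $p$ one needs a bounded-index version (Proposition \ref{small prime lemma}), which is not soft --- it forces the new lemmas on mod-$\pfrak^n$ characters and the CM-lifting/computability trick, precisely because the kernel can be deep at a small prime; your proposal ignores small primes entirely. Granting full-algebra image, Galois-stability of $G_p$ makes it an $M_2(\o_{F,p})$-module, Morita and the PID structure of $\o_{F,p}$ write the preimage lattice as $\alpha_p\cdot T_p(A)$, and then gluing the local data into a single $\gamma\in M_n(\o_F)$ with $\ker\gamma\supseteq G$ and $[\ker\gamma:G]\ll_{g,K,S}1$ is an adelic double-coset/class-number (Minkowski, Borel--Harish-Chandra) argument --- there is no single "good prime" shortcut. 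The paper also needs the preliminary reductions you wave at only briefly: a priori control of $\End_K^0(A)$ including the quaternionic case (Proposition \ref{the endomorphism algebra is one of an explicit finite set of possibilities}), passage to maximal orders via a (noncommutative) Serre tensor construction, and the descent of the $\Hom$-generation statement from $K'$ to $K$ by averaging. Your Step 1 is consistent with the paper's Lemma \ref{irreducibility after some point}, but by itself it is far from sufficient.
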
\noindent
Our argument is in principle completely explicit except for one step where we avoid a calculation with Breuil-Kisin modules via a trick. In other words we believe there is no obstruction to making the map $(g,K,S)\mapsto \textit{\c{s}}_{g,K,S}$ explicit.

This easily implies the following two theorems.
\begin{thm}\label{the endomorphism ring is one of an explicit finite set of possibilities}
Let $g\in \Z^+$. Let $K/\Q$ be a number field. Let $S$ be a finite set of places of $K$. Then: there is an effectively computable finite set of (isomorphism classes of) rings $\mathcal{R}_{g,K,S}$ depending only on $g$, $K$, and $S$ such that the following holds.
\begin{itemize}
\item Let $A/K$ be a $g$-dimensional abelian variety of $\GL_2$-type over $\Qbar$ which has good reduction outside $S$. Then: $\End_K(A)\in \mathcal{R}_{g,K,S}$.
\end{itemize}
\end{thm}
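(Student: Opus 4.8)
The plan is to deduce Theorem \ref{the endomorphism ring is one of an explicit finite set of possibilities} from the isogeny estimate of Theorem \ref{a priori isogeny estimate for GL2-type abelian varieties} by a pigeonhole-style argument. Fix $g$, $K$, $S$ and set $\text{\c{s}} = \text{\c{s}}_{g,K,S}$ as produced by that theorem. Let $A/K$ be a $g$-dimensional abelian variety of $\GL_2$-type over $\Qbar$ with good reduction outside $S$. The first step is to observe that $\End_K(A)$ is a finite free $\Z$-module of rank at most $(2g)^2$ (in fact at most $2g$, since $A$ is of $\GL_2$-type, but the crude bound suffices), so it is enough to bound the discriminant of $\End_K(A)$ as a $\Z$-algebra by an effectively computable constant: there are only finitely many (isomorphism classes of) orders in semisimple $\Q$-algebras of bounded $\Q$-dimension and bounded discriminant, and this finite list is effectively computable.

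To bound that discriminant, I would exploit that $A\times_K A$ is again of $\GL_2$-type over $\Qbar$ (a product of $\GL_2$-type abelian varieties is of $\GL_2$-type), $2g$-dimensional, and has good reduction outside $S$; more generally so is $A^{\times m}$ for any fixed $m$. The key idea is that each endomorphism $f\in\End_K(A)$, up to multiplication by a bounded integer, can be realized through a $K$-isogeny of controlled degree. Concretely, one applies Theorem \ref{a priori isogeny estimate for GL2-type abelian varieties} with $A=B$: the ring $\End_K(A) = \Hom_K(A,A)$ is generated as an abelian group by $K$-endomorphisms of degree $\leq \text{\c{s}}_{2g,K,S}$ (applying the theorem with dimension $2g$ to absorb the products that arise, or directly with dimension $g$). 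The set of $K$-endomorphisms of $A$ of degree $\leq \text{\c{s}}$ is finite with effectively bounded cardinality — indeed an isogeny of degree $\leq \text{\c{s}}$ is determined by its kernel, a subgroup scheme of $A[\text{\c{s}}!]$, and there are effectively boundedly many such subgroup schemes. Hence $\End_K(A)$ is generated as a $\Z$-module by an effectively bounded number of elements each lying in an effectively bounded set; composing these generators and reading off the multiplication table gives an effective bound on the discriminant.

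The main obstacle I expect is the passage from "degree bound on a generating set of isogenies" to a genuine structural bound on the ring: one must be careful that finitely many $\Z$-module generators, each of bounded degree, really do pin down the ring up to finitely many effectively computable possibilities. The cleanest route is to bound everything through the faithful action of $\End_K(A)$ on the $\ell$-adic Tate module $T_\ell A$ for a small auxiliary prime $\ell$ (say the smallest prime of good reduction not in $S$): an endomorphism of degree $\leq \text{\c{s}}$ acts on $T_\ell A$ (a free $\Z_\ell$-module of rank $2g$) by a matrix whose determinant has $\ell$-adic valuation bounded in terms of $\text{\c{s}}$, so the $\Z$-subalgebra it generates is trapped, after clearing bounded denominators, inside a single effectively specified order in $M_{2g}(\Q_\ell)\cap M_{2g}(\Q)$; there are only finitely many subrings of a fixed finitely generated $\Z$-module, and one enumerates those that are $\Z$-algebras of the right dimension. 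I would also remark that an entirely parallel argument, run with $A$ replaced by $A\times B$ for varying $B$ in the same isogeny class, recovers the finiteness of the isogeny class itself — but for the present theorem the single-variety version above is all that is needed. One then collects, over the finite list of possible $(\ell, \text{\c{s}}_{2g,K,S})$ data, the resulting finite set of ring isomorphism classes and calls it $\mathcal{R}_{g,K,S}$.
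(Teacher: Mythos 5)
Your top-level strategy — specialize Theorem \ref{a priori isogeny estimate for GL2-type abelian varieties} to $B=A$ — is exactly the paper's stated proof, but the way you try to complete the deduction contains a genuine error. The claim that "the set of $K$-endomorphisms of $A$ of degree $\leq \text{\c{s}}$ is finite with effectively bounded cardinality" is false, and so is the reason you give: an isogeny is \emph{not} determined by its kernel (only up to post-composition with an automorphism of the target), and $\Aut_K(A)$ is typically infinite in precisely the $\GL_2$-type setting at hand — e.g.\ if $\End_K(A)$ is an order $\o$ in a real quadratic field, every unit in $\o^\times$ is an endomorphism of degree $1$. Worse, the bare statement "generated as an abelian group by isogenies of degree $\leq \text{\c{s}}$" does not by itself confine the ring to finitely many isomorphism classes: inside $M_2(\Z)=\End_K(E\times E)$ the order $\left\{\twobytwo{a}{b}{Nc}{d}\right\}$ is, for \emph{every} $N$, spanned over $\Z$ by $\id$, $\twobytwo{1}{1}{0}{1}$, $\twobytwo{1}{0}{N}{1}$, $\twobytwo{1}{0}{0}{2}$, all isogenies of $E\times E$ of degree $\leq 4$, yet these orders have unbounded discriminant. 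Your Tate-module patch does not repair this: bounding the $\ell$-adic valuation of $\det$ at one auxiliary $\ell$ neither traps the generators in a fixed finitely generated $\Z$-module (unipotents and units have determinant $1$ and unbounded entries) nor sees the discriminant of the global order at the other primes, and $M_{2g}(\Z_\ell)$ has infinitely many subrings in any case.

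What actually closes the argument is the route the paper records in its footnote: pass to the explicit extension $K'/K$ of Lemma \ref{everything happens over an explicit finite extension} so that $A/K'$ is of $\GL_2$-type over $K'$ with all geometric endomorphisms defined; invoke Proposition \ref{the endomorphism algebra is one of an explicit finite set of possibilities} to put $\End^0$ in an explicit finite list, and Proposition \ref{everybody is isogenous to someone with maximal endomorphisms} to produce $A'\sim_K A$ with $\End_K(A')$ one of an explicit finite set of maximal orders; then Theorem \ref{a priori isogeny estimate for GL2-type abelian varieties} (plus Lemma \ref{reversal of arrows}) gives an isogeny $u:A\to A'$ of degree $d\ll_{g,K,S}1$, and conjugating by $u$ identifies the two endomorphism algebras so that $d\cdot\End_K(A')\subseteq \End_K(A)\subseteq \frac{1}{d}\End_K(A')$. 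This bounds the index/discriminant of $\End_K(A)$ relative to a ring from a finite effective list, which is what pins down $\End_K(A)$ up to an effectively computable finite set of isomorphism classes — the structural input about maximal orders and a bounded-degree isogeny to a "maximal" representative is what your pigeonhole-on-generators argument is missing.
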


\begin{thm}\label{upper bound on the number of GL2-type abelian varieties}
Let $g\in \Z^+$. Let $K/\Q$ be a number field. Let $S$ be a finite set of places of $K$. Then: there is an effectively computable constant $\bigiy_{g,K,S}\in \Z^+$ depending only on $g$, $K$, and $S$ such that the number of $K$-isomorphism classes of abelian varieties $A/K$ which are of $\GL_2$-type over $\Qbar$ and have good reduction outside $S$ is $\leq \bigiy_{g,K,S}$.
\end{thm}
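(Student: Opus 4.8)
The plan is to use Theorem~\ref{a priori isogeny estimate for GL2-type abelian varieties} to see that each $K$-isogeny class contains only boundedly many of the abelian varieties in question, and then to bound the number of $K$-isogeny classes by a Galois-theoretic argument. For the first point, fix one such abelian variety $A/K$ and let $B/K$ be another one which is $K$-isogenous to $A$. Since $\Hom_K(A,B)\neq 0$, Theorem~\ref{a priori isogeny estimate for GL2-type abelian varieties} provides in particular a $K$-isogeny $f\colon A\to B$ of degree $\leq \text{\c{s}}_{g,K,S}$; its kernel is then a finite flat subgroup scheme of $A$ of order $\leq \text{\c{s}}_{g,K,S}$, hence is contained in $A[N]$ with $N:=\operatorname{lcm}(1,\dots,\text{\c{s}}_{g,K,S})$, and $B\cong A/\ker f$. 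As the number of finite flat subgroup schemes of $A[N]$ is at most the number of subgroups of $(\Z/N\Z)^{2g}$ --- an effectively computable quantity $M_{g,K,S}$ --- the $K$-isogeny class of $A$ contains at most $M_{g,K,S}$ of our abelian varieties up to $K$-isomorphism. It therefore remains to bound the number of $K$-isogeny classes.

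For that I would pass to Galois representations. By Theorem~\ref{the endomorphism ring is one of an explicit finite set of possibilities}, applied over an extension $K'/K$ of degree bounded in terms of $g$ alone and unramified outside $S$ --- over which all $\Qbar$-endomorphisms of $A$ become defined, and of which there are only finitely many by effective Hermite--Minkowski --- the ring $\End_{\Qbar}(A)$, equal to $\End_{K'}(A)$ by construction, is drawn from an effectively computable finite set, whence so is any degree-$g$ subfield $E$ of $\End^0_{\Qbar}(A)$; in particular $\disc E$ is bounded. Exploiting the $\GL_2$-type structure, over $K'$ the Tate modules of $A$ decompose into a compatible system of two-dimensional representations $\rho_\lambda\colon G_{K'}\to\GL_2(E_\lambda)$ that are unramified outside the (bounded) set of places above $S$ and, above a small rational prime $\ell$ chosen effectively away from $S$, from the ramification of $K'$, and from $\disc E$, are crystalline with Hodge--Tate weights $\{0,1\}$. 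Serre's effective bound on conductor exponents confines the prime-to-$\ell$ conductor of $\rho_\lambda$ to one of finitely many ideals, and the standard bounds on the ramification of the $\ell$-torsion of an abelian variety with good reduction at $\ell$ (Fontaine, Raynaud) bound the degree and discriminant of the field $K'(\overline{\rho}_\lambda)$ cut out by the residual representation, so that $\overline{\rho}_\lambda$ lies in an effectively computable finite list by effective Hermite--Minkowski. Finally, for each residual representation on this list the Faltings--Serre method produces an effectively computable finite set $T$ of places of $K'$ --- effective because the relevant fields have bounded discriminant, so that Chebotarev may be invoked effectively --- with the property that $\rho_\lambda$, and hence by Faltings the $K$-isogeny class of $A$, is determined by the traces $\tr\rho_\lambda(\Frob_v)$, $v\in T$; by purity these traces are algebraic integers of $E$ lying in a bounded region and so take one of effectively finitely many values. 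This bounds the number of $K$-isogeny classes, and multiplying by $M_{g,K,S}$ gives the theorem.

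The main obstacle is the second step, which is in effect an effective form of Faltings' (ineffective) finiteness of isogeny classes. What makes it go through is precisely the $\GL_2$-type hypothesis, which supplies the two-dimensional compatible systems to which the completely explicit Faltings--Serre machinery applies, together with Theorem~\ref{a priori isogeny estimate for GL2-type abelian varieties} and its consequence Theorem~\ref{the endomorphism ring is one of an explicit finite set of possibilities}, which bound $\disc E$ and thereby confine the Weil-bounded Frobenius traces to an effectively finite set. One must be careful to keep every appeal to Chebotarev effective --- which is why the Hermite--Minkowski bounds on all the relevant discriminants have to be established first --- and to control the ramification at $\ell$, which is exactly where good reduction (forcing Hodge--Tate weights $\{0,1\}$ and the clean ramification bounds of Fontaine and Raynaud) enters.
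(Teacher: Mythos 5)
Your first step --- boundedly many $K$-isomorphism classes inside each $K$-isogeny class, obtained from Theorem \ref{a priori isogeny estimate for GL2-type abelian varieties} by counting subgroups of $A[N]$ with $N=\mathrm{lcm}(1,\dots)$ --- is correct and is exactly what the paper does. The gap is in your second step. You assert that the traces $\tr\rho_\lambda(\Frob_v)$, $v\in T$, of the two-dimensional systems over $K'$ determine ``by Faltings the $K$-isogeny class of $A$.'' This is false as stated: those data only see the $\Gal(\Qbar/K')$-module $V_\ell(A)$, hence only the $K'$-isogeny class of $A_{K'}$. For instance, twists of $A$ by quadratic characters of $\Gal(K'/K)$ unramified outside $S$ have identical $\rho_\lambda\vert_{\Gal(\Qbar/K')}$ but are in general pairwise non-$K$-isogenous, so your bound on the number of $K$-isogeny classes does not follow without a further argument bounding the number of $K$-isogeny classes lying over a given $K'$-isogeny class (this is fixable, e.g.\ any such $B/K$ is a $K$-isogeny factor of $\Res_{K'/K}(A_{K'})$, which has boundedly many factors, but you do not say it). A second, smaller, flaw: from the finiteness of the list of possible $\End_{\Qbar}(A)$ you infer that ``any degree-$g$ subfield $E$'' has bounded discriminant; that inference is wrong (already $M_g(\Q)$ contains every totally real field of degree $g$), so the choice of $\ell$ prime to $\disc E$ and the ensuing effective-Chebotarev bookkeeping require $E$ to be chosen more carefully, e.g.\ the centre of $\End_{K'}^0(A)$ as in Proposition \ref{the endomorphism algebra is one of an explicit finite set of possibilities}.

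Both defects are repairable, but note that the whole detour through $K'$, endomorphism rings, conductors and residual representations is unnecessary for this step. The paper bounds the number of $K$-isogeny classes in one stroke: apply Lemma \ref{faltings' lemma} to the full $2g$-dimensional $\ell$-adic representations of $\Gal(\Qbar/K)$ (with $d=2g$, $\o=\Z$, a fixed small auxiliary $\ell$). That lemma already furnishes an effective set $T$ of places of $K$ such that the trace function on all of $\Gal(\Qbar/K)$ --- and hence, by Faltings' semisimplicity and his proof of the Tate conjecture for homomorphisms, the $K$-isogeny class --- is determined by the rational integers $\tr\rho_{A,\ell}(\Frob_\pfrak)$ for $\pfrak\in T$, and by Weil these lie in $[-2g\sqrt{\Nm\,\pfrak},\,2g\sqrt{\Nm\,\pfrak}]$, so they take effectively boundedly many values. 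Working over $K$ throughout, the $K$-versus-$K'$ ambiguity that sinks your argument never arises, and no $\GL_2$-type input is needed for this half of the proof.
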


Of course the following follows immediately.

\begin{cor}\label{upper bound on the number of S-integral K-points on a Hilbert modular variety}
Let $g\in \Z^+$. Let $F/\Q$ be a totally real field of degree $g = [F:\Q]$. Let $\o\subseteq \o_F$ be an order in $\o_F$. Let $K/\Q$ be a number field. Let $S$ be a finite set of places of $K$.\footnote{To ignore subtleties with integral models we implicitly add to $S$ all primes of $K$ of norm $\leq \left(10^{10}\cdot |\Delta_{\o}|\right)^{10^{10}\cdot [K:\Q]}$.} Then: there is an effectively computable constant $\bighardsign_{g,K,S}\in \Z^+$ depending only on $g$, $K$, and $S$ such that the number of $\o_{K,S}$-points on the canonical $\o_{K,S}$-integral model $\mathcal{H}_\o$ of the\footnote{By this we really mean that the statement holds regardless of the choice of polarization data, which our notation leaves implicit.} Hilbert modular stack associated to $\o$ is $\#|\mathcal{H}_\o(\o_{K,S})| \leq \bighardsign_{g,K,S}$.
\end{cor}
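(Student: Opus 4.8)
The plan is to reduce Corollary \ref{upper bound on the number of S-integral K-points on a Hilbert modular variety} to Theorem \ref{upper bound on the number of GL2-type abelian varieties} by interpreting $\o_{K,S}$-points of $\mathcal{H}_\o$ as abelian varieties over $K$ with the requisite extra structure. First I would recall the modular interpretation: an $\o_{K,S}$-point of $\mathcal{H}_\o$ corresponds (up to the polarization data we are suppressing) to a $g$-dimensional abelian variety $A/K$ together with an injection $\o\hookrightarrow \End_K(A)$ of the order $\o\subseteq \o_F$ into the endomorphism ring, plus a compatible polarization; the fact that $\o$ acts means that $A$ acquires, over $\Qbar$, an action of the totally real field $F = \o\otimes\Q$ of degree $g = \dim A$, which is exactly the condition that $A$ be of $\GL_2$-type over $\Qbar$ (the relevant $\End^0_{\Qbar}(A)$ contains a number field of degree equal to $\dim A$). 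Moreover the point being $\o_{K,S}$-integral means, via the Néron–Ogg–Shafarevich criterion and the definition of the canonical integral model, that $A$ has good reduction outside $S$ --- here the footnote enlarging $S$ by all primes of $K$ of norm $\leq (10^{10}|\Delta_\o|)^{10^{10}[K:\Q]}$ is precisely what lets us ignore the primes of bad reduction of the moduli problem itself and the discrepancy between coarse/fine integral models.

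Having set this up, the counting is then immediate: the number of $\o_{K,S}$-points of $\mathcal{H}_\o$ is bounded by the number of $K$-isomorphism classes of such $(A, \iota, \ldots)$, which is at most (the number of $K$-isomorphism classes of $g$-dimensional abelian varieties $A/K$ of $\GL_2$-type over $\Qbar$ with good reduction outside $S$) times (the number of ways to equip a fixed such $A$ with the extra order-action and polarization data). The first factor is $\leq \bigiy_{g,K,S}$ by Theorem \ref{upper bound on the number of GL2-type abelian varieties}. For the second factor one invokes Theorem \ref{the endomorphism ring is one of an explicit finite set of possibilities}: $\End_K(A)$ lies in the explicit finite set $\mathcal{R}_{g,K,S}$, so there are only effectively boundedly many ring embeddings $\o\hookrightarrow \End_K(A)$, and for each the set of compatible polarizations of bounded degree (the degree being controlled again by $g,K,S$ via the polarization data implicit in $\mathcal{H}_\o$, together with standard finiteness of polarizations of bounded degree on a fixed abelian variety over a number field, e.g.\ via Zarhin's trick and the theory of the Néron–Severi lattice) is effectively finite. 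Multiplying these effectively computable bounds gives the desired constant $\bighardsign_{g,K,S}$.

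The main obstacle is bookkeeping at the level of stacks and integral models rather than anything deep: one must check that an $\o_{K,S}$-point of the \emph{stack} $\mathcal{H}_\o$ genuinely gives rise to an abelian variety over $K$ (and not merely over some extension, or a twisted form) with the stated properties, and that the passage from the modular problem --- which remembers a polarization and possibly a level structure --- to the bare abelian variety loses only an effectively bounded amount of information. This is where the enlargement of $S$ in the footnote does the work: it absorbs the primes dividing $\Delta_\o$ and the level, so that over $\o_{K,S}$ the integral model is the expected smooth one and Néron–Ogg–Shafarevich applies cleanly. A secondary point is ensuring that "of $\GL_2$-type over $\Qbar$" is literally the condition produced by the $\o$-action; this is standard, since $F\otimes_{\Q}\Qbar$-multiplication on $H_1(A,\Q)\otimes\Qbar$ forces $A$ to be $\Qbar$-isogenous to a product of $\Qbar$-simple factors each of $\GL_2$-type, and in fact the presence of a degree-$g$ totally real field acting on a $g$-dimensional $A$ is one of the standard equivalent formulations. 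With these checks in place, the corollary follows formally, and the constant is effective because every ingredient --- $\bigiy_{g,K,S}$, $\mathcal{R}_{g,K,S}$, the polarization-degree bound, and the norm bound $(10^{10}|\Delta_\o|)^{10^{10}[K:\Q]}$ --- is.
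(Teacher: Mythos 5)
Your reduction is the same one the paper makes: read off from an $\o_{K,S}$-point of the stack an abelian variety $A/K$ of $\GL_2$-type over $\Qbar$ with good reduction outside $S$, bound the number of such $A$ by Theorem \ref{upper bound on the number of GL2-type abelian varieties}, and then bound the number of ways to put the extra structure ($\o$-action and polarization data) on a fixed $A$. (The paper first passes, via Chevalley--Weil, to the Hilbert modular variety with full level structure so as to count points of a scheme; your decision to work directly with the groupoid of $\o_{K,S}$-points of the stack is a legitimate shortcut, since an $\o_{K,S}$-point of the stack literally is an abelian scheme over $\o_{K,S}$ with $\o$-multiplication, so no twisting issue arises and the footnote's enlargement of $S$ is only needed for the integral-model normalization, as you say.)

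The genuine gap is in your second factor. You assert that, because $\End_K(A)$ lies in the finite set $\mathcal{R}_{g,K,S}$, ``there are only effectively boundedly many ring embeddings $\o\inj \End_K(A)$.'' As stated this is false whenever the unit group of $\End_K(A)$ is infinite (e.g.\ $\End_K(A)\iso M_n(\o_{K_1})$): conjugating one embedding by units produces infinitely many distinct embeddings. What is actually needed --- and what suffices for the count, since two points of the stack with the same underlying $A$ are isomorphic exactly when the embeddings and polarization data are intertwined by an automorphism of $A$ --- is the effective finiteness of embeddings $\o\inj \o'$, $\o'\in\mathcal{R}_{g,K,S}$, \emph{up to conjugation by $\o'^\times$}. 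That finiteness is not a formal consequence of $\mathcal{R}_{g,K,S}$ being finite; it is the step the paper discharges by an easily effectivized case of Borel--Harish-Chandra (equivalently, the generalization of Eichler's trace formula counting optimal embeddings). Your polarization count is fine in spirit (the paper counts $\o$-linear isomorphisms $A\otimes_\o \afrak\simeq A^*$ with $\afrak\in\Cl(\o)$ and calls it easy), but you should replace the embedding-count assertion by the conjugacy-class count and cite or prove its effective finiteness; with that repair the argument closes.
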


We also deduce an effective open image theorem \`{a} la Serre for $\GL_2$-type abelian varieties. For $F/\Q$ a number field and $\o\subseteq F$ an order, write, informally, $G_\o := \{g\in \Res_{\o/\Z}\,{\GL_2} : \det{g}\in \G_m\subseteq \Res_{\o/\Z}\,{\G_m}\}$, so that $G_{\o}(\Z_p) := \{g\in \GL_2(\o_p) : \det{g}\in \Z_p^\times\}$, where as usual we have written $\o_p := \o\otimes_\Z \Z_p$.

\begin{thm}\label{serre open image theorem for GL2-type abelian varieties}
Let $g\in \Z^+$. Let $K/\Q$ be a number field. Let $S$ be a finite set of places of $K$. Then: there is an explicit (thus effectively computable) constant $\emph{\text{\^{\i}}}_{g,K,S}\in \Z^+$ depending only on $g$, $K$, and $S$ such that the following holds.

\begin{itemize}
\item Let $A/K$ be a $g$-dimensional $\Qbar$-simple split semistable abelian variety over $K$ which has good reduction outside $S$, is of $\GL_2$-type over $K$, does not admit sufficiently many complex multiplications over $\Qbar$, and is such that, for all $p\geq \emph{\text{\^{\i}}}_{g,K,S}$, $\det{\rho_{A,p}} = \chi_p$, where $\chi_p$ is the $p$-adic cyclotomic character and, writing $\o$ for the centre of $\End_K(A)$ (thus $\o\subseteq F$ is an order in a CM\footnote{For us a CM field $K/\Q$ is either totally real or else a totally imaginary quadratic extension of a totally real field.} $F/\Q$ with $[F:\Q] = g$ or $\frac{g}{2}$), $\rho_{A,p} : \Gal(\Qbar/K)\to \GL_2(\o_p)$ is the representation corresponding to the $\End_K(A)[\Gal(\Qbar/K)]$-module structure of $T_p(A)$.

Then: $\left(\prod_{p\geq \emph{\text{\^{\i}}}_{g,K,S}} \rho_{A,p}\right)(\Gal(\Qbar/K)) = \prod_{p\geq \emph{\text{\^{\i}}}_{g,K,S}} G_\o(\Z_p)$.
\end{itemize}
\end{thm}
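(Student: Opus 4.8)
The plan is to combine the finite‑level mod‑$p$ image theorem of Dimitrov (in its effective form, Corollary \ref{an explicit form of a theorem of dimitrov}) with our height‑free isogeny estimate to control the index of the image at each prime, and then to run a standard "independence of primes" argument (Serre, Ribet, Nakamura) to glue the local surjectivity statements into the asserted product. First I would use the hypothesis that $A$ is $\Qbar$‑simple, of $\GL_2$‑type over $K$, non‑CM, and split semistable to put the residual representation $\rhobar_{A,p}:\Gal(\Qbar/K)\to\GL_2(\o_p/p)\cong \prod_{\pfrak\mid p}\GL_2(\F_{\pfrak})$ in the shape of (a twist of) the residual representation of a Hilbert modular eigenform over the totally real field $F$ attached to $A$ (the Blasius–Rogawski / modularity‑free dictionary between $\GL_2$‑type abelian varieties and Hilbert modular forms is what makes this legitimate). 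The conductor of this eigenform, its weight, and its field of coefficients are all bounded effectively in terms of $g$, $K$, and $S$: the weight is parallel $2$, the field of coefficients has degree $\leq g$, and the conductor divides a fixed ideal determined by $S$ and $g$ via the good‑reduction‑outside‑$S$ hypothesis together with the conductor–discriminant bound. Feeding these bounds into Corollary \ref{an explicit form of a theorem of dimitrov} yields an explicit constant $\text{\^{\i}}_{g,K,S}^{(0)}$ such that for all $p\geq\text{\^{\i}}_{g,K,S}^{(0)}$ the image of $\rhobar_{A,p}$ is as large as possible given $\det\rhobar_{A,p}=\chibar_p$, i.e. equals $G_\o(\Z_p)\bmod p=\{g\in\GL_2(\o_p/p):\det g\in(\Z/p)^\times\}$.

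Next I would upgrade this to surjectivity onto $G_\o(\Z_p)$ for each such $p$. The key is that the kernel of the reduction $G_\o(\Z_p)\to G_\o(\Z/p)$ is a pro‑$p$ group whose Frattini quotient is an $\F_p[G_\o(\Z/p)]$‑module that, for $p$ large, contains no $G_\o(\Z/p)$‑submodule on which the action factors through a proper quotient — this is the usual lifting‑the‑exponent / $H^1$‑vanishing input in Serre's argument, and it works verbatim once the residual image is all of $G_\o(\Z/p)$ and $p$ exceeds a bound depending only on $g$ (hence on the size of the Lie algebra of $G_\o$). It is here that I would, if necessary, invoke the hypothesis $\det\rho_{A,p}=\chi_p$ for $p\geq\text{\^{\i}}_{g,K,S}$: it pins the determinant exactly, removing the only "abelian" obstruction to full surjectivity. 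So for a suitable $\text{\^{\i}}_{g,K,S}^{(1)}\geq\text{\^{\i}}_{g,K,S}^{(0)}$ one gets $\rho_{A,p}(\Gal(\Qbar/K))=G_\o(\Z_p)$ for every $p\geq\text{\^{\i}}_{g,K,S}^{(1)}$. (This single‑prime surjectivity is essentially Serre–Ribet for $\GL_2$‑type abelian varieties; the only new content is effectivity of the cutoff, which we have from the effective Dimitrov input.)

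Finally I would establish independence of the primes. Let $G:=\left(\prod_{p\geq\text{\^{\i}}}\rho_{A,p}\right)(\Gal(\Qbar/K))$; it is a closed subgroup of $\prod_{p\geq\text{\^{\i}}}G_\o(\Z_p)$ surjecting onto each factor, so by Goursat's lemma together with the fact that, for $p$ large, $G_\o(\Z_p)$ has no nonabelian finite simple quotients in common with $G_\o(\Z_q)$ for $q\neq p$ (the composition factors are of the form $\PSL_2(\F_{\pfrak})$ with $\pfrak\mid p$, and $\PSL_2(\F_{\pfrak})\iso\PSL_2(\F_{\qfrak})$ forces $p=q$ for $p,q$ large), $G$ must be the full product once we also kill the abelian parts. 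The abelian parts are controlled by the determinant: the composite $\Gal(\Qbar/K)\to\prod_p(\Z/p)^\times$ via $\prod_p\det\rho_{A,p}=\prod_p\chi_p$ has open image by the independence of the cyclotomic characters $\chi_p$ (Chinese Remainder Theorem plus the fact that $\Q^{\mathrm{cyc}}$ is linearly disjoint from the remaining abelian extensions cut out by the $A[p]$ once $p$ is large), and a short cohomological argument shows this is the only abelian obstruction. Hence $G=\prod_{p\geq\text{\^{\i}}_{g,K,S}}G_\o(\Z_p)$, which is the claim, with $\text{\^{\i}}_{g,K,S}:=\text{\^{\i}}_{g,K,S}^{(1)}$ (enlarged if needed to exceed the finitely many primes where the disjointness could fail, a set again bounded in terms of $g$, $K$, $S$).

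\medskip

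The main obstacle is the \emph{effective} separation of primes in the last step: ineffectively this is Serre's theorem, but to get an explicit $\text{\^{\i}}_{g,K,S}$ one must bound the primes $p$ for which $\Q^{\mathrm{cyc}}$, or the field cut out by $A[q]$ for some $q<p$, could fail to be linearly disjoint from the field cut out by $A[p]$. I would handle this exactly as in the proof of Theorem \ref{a priori isogeny estimate for GL2-type abelian varieties}: any such failure produces a $\Gal(\Qbar/K)$‑stable submodule or a congruence between the mod‑$p$ and mod‑$q$ representations, which by the dictionary with Hilbert modular forms is a congruence between an eigenform and (a twist of) itself or a form of smaller level, and the effective Sturm bound / the effective isogeny estimate bounds the primes at which such a congruence can occur purely in terms of $g$, $K$, $S$. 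Packaging these bounds together yields the explicit constant.
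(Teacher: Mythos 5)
Your first step rests on a ``modularity-free dictionary'' that does not exist in the generality required. The theorem is stated for $A$ over an arbitrary number field $K$; the representations $\rho_{A,p}$ are representations of $\Gal(\Qbar/K)$, not of the absolute Galois group of a totally real field, so Corollary \ref{an explicit form of a theorem of dimitrov} (which concerns eigenforms over a totally real $F$ and representations of $\Gal(\Qbar/F)$) cannot be fed your residual representations at all. Even when $K$ is totally real, attaching a Hilbert eigenform to a $\GL_2$-type abelian variety is a Serre-type modularity conjecture (Blasius--Rogawski go in the other direction, from forms to Galois representations), and it is not available here --- indeed in the paper the logical dependency is reversed: Corollary \ref{an explicit form of a theorem of dimitrov} is deduced at the end from the same machinery, and the input actually used for residual largeness is Lemma \ref{irreducibility after some point} upgraded to Theorem \ref{huge image after some point} via the quadratic-character trick (the character $\eps$ measuring $\rhobar^\sigma$ versus $\rhobar$, shown unramified outside $S$ by Raynaud's fundamental-character multiplicity bound) together with Proposition \ref{large prime lemma}. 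Moreover, even granting a Dimitrov-type conclusion, condition $(\textbf{LI}_{\rhobar})$ only places the image between $\SL_2(\F_q)$ and scalars times $\GL_2(\F_q)$ for some subfield $\F_q$ of the residue field; identifying $\F_q$ with the full residue field is precisely the nontrivial content of Theorem \ref{huge image after some point}, which your sketch assumes rather than proves.

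There is a second gap in the gluing, and a misplaced diagnosis of where the difficulty lies. Surjectivity onto each factor $\GL_2(\o/\pfrak)$ for $\pfrak\mid p$ does not give surjectivity onto $\prod_{\pfrak\mid p}\GL_2(\o/\pfrak)$: for distinct $\pfrak,\pfrak'$ above the same $p$ the residue fields (hence the simple quotients $\PSL_2$) can coincide, so your ``no common nonabelian simple quotients'' argument, which is fine across distinct rational primes, fails within a single $p$. The paper rules out the Goursat graph subgroups at a fixed $p$ by invoking Proposition \ref{large prime lemma}: the $\Z_p$-algebra generated by the image is all of $M_2(\o_{F,p})$, whereas a graph subgroup generates a proper subalgebra. (Dimitrov's $(\textbf{LI}_{\Ind\rhobar})$, which is the modular-forms analogue of this step, carries an extra ``not a twist of an internal conjugate'' hypothesis you have not verified and cannot import anyway.) Finally, the ``main obstacle'' you flag --- effective separation of distinct primes $p\neq\ell$ --- needs no Sturm bounds or congruence analysis: once single-prime surjectivity holds for all $p$ past the cutoff, the cross-prime independence is purely group-theoretic (the kernel of $\SL_2(\o_\pfrak)\to\SL_2(\o/\pfrak)$ is pro-solvable, so $\prod_{p\neq\ell}G_p$ has no $\PSL_2(\F_{\ell^k})$ quotient), exactly as in the paper's final paragraphs; the genuine effectivity burden sits entirely in the per-prime largeness statements you outsourced to the unavailable modularity input.
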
\noindent
For small primes we content ourselves with Proposition \ref{small prime lemma}. Of course, given a $\Qbar$-simple $g$-dimensional abelian variety $A/K$ which is of $\GL_2$-type over $\Qbar$ and which has good reduction outside $S$, there is an explicit finite extension $L/K$ depending only on $g$, $K$, and $S$ such that the hypotheses of Theorem \ref{serre open image theorem for GL2-type abelian varieties} hold for $A/L$.

It is perhaps worth noting that for relevant abelian varieties with compact moduli Theorem \ref{serre open image theorem for GL2-type abelian varieties} gives a uniform effective adelic surjectivity statement --- e.g.\ for fake elliptic curves one has the following.
\begin{cor}\label{serre open image theorem for fake elliptic curves}
Let $B/\Q$ be a quaternion algebra over $\Q$ of discriminant $\Delta_B\neq 1$. Let $\o\subseteq B$ be a maximal order. Let $X_{\o}/\Q$ be the corresponding (fine) Shimura curve. Let $K/\Q$ be a number field. Then: there is an explicit (thus effectively computable) constant $\zh_{B,K}\in \Z^+$ depending only on $B$ and $K$ such that the following holds.

\begin{itemize}
\item Let $P\in X_{\o}(K)$ be non-CM. Let $A/K$ be the ($\Qbar$-simple) abelian surface corresponding to $P$. Let, for $p\nmid \Delta_B$, $\rho_{A,p} : \Gal(\Qbar/K)\to \GL_2(\Z_p)$ be the representation corresponding to the $\o[\Gal(\Qbar/K)]$-module structure of $T_p(A)$. Then: $\left(\prod_{p\geq \zh_{B,K}} \rho_{A,p}\right)(\Gal(\Qbar/K)) = \prod_{p\geq \zh_{B,K}} \GL_2(\Z_p)$.
\end{itemize}
\end{cor}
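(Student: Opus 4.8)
The plan is to derive this corollary from Theorem~\ref{serre open image theorem for GL2-type abelian varieties} applied to the family of abelian surfaces parametrized by $X_\o$, together with a compactness argument that lets us absorb the "small prime" and residual-image hypotheses uniformly. First I would observe that the Shimura curve $X_\o/\Q$ is proper, so the set of places of bad reduction of the abelian surface $A/K$ attached to a point $P \in X_\o(K)$ is controlled: by the N\'eron--Ogg--Shafarevich criterion and the properness of the integral model of $X_\o$ away from $\Delta_B$, there is a finite set $S = S_{B,K}$ of places of $K$, depending only on $B$ and $K$ (the places dividing $\Delta_B$, the places of bad reduction of $X_\o$, and the "fudge" primes of small norm), outside of which \emph{every} such $A$ has good reduction. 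So all the $A$'s arising this way fit into the hypotheses of Theorem~\ref{serre open image theorem for GL2-type abelian varieties} with a single $(g,K,S) = (2, K, S_{B,K})$.

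Next I would check the remaining hypotheses of that theorem one at a time for a non-CM $P \in X_\o(K)$. The corresponding $A/K$ is $\Qbar$-simple (since $B$ is a division algebra, $\Delta_B \neq 1$, so the generic fibre has no nontrivial abelian subvariety) and has $\o \hookrightarrow \End_K(A)$, making it of $\GL_2$-type over $K$ with centre of $\End_K(A)$ equal to $\Z$ (the totally real field is $\Q$ here, so $F = \Q$, $\o = \Z$, and $G_\o(\Z_p) = \GL_2(\Z_p)$, matching the statement); it does not admit sufficiently many complex multiplications over $\Qbar$ precisely because $P$ is non-CM; semistability and splitness can be arranged — and this is the point to be careful about — by noting that after the explicit finite base change furnished by the last sentence before Corollary~\ref{serre open image theorem for fake elliptic curves}, which depends only on $g$, $K$, $S$, these hold, and since a fake elliptic curve has potentially good reduction everywhere (compact moduli) the semistable model is in fact good, so "split semistable" is automatic there. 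Finally the determinant condition $\det \rho_{A,p} = \chi_p$ for $p$ large is the standard fact that the Weil pairing on $T_p A$ realizes $\wedge^2_{\o} T_p A$ as $\Z_p(1)$ up to the action of $\o^\times$, which is forced to be trivial for $p$ large by a congruence/finiteness argument; alternatively this is subsumed in the explicit base change remark.

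Having verified the hypotheses, Theorem~\ref{serre open image theorem for GL2-type abelian varieties} gives an explicit constant $\text{\^{\i}}_{2,K,S_{B,K}}$, depending only on $B$ and $K$, such that $\left(\prod_{p \geq \text{\^{\i}}_{2,K,S_{B,K}}} \rho_{A,p}\right)(\Gal(\Qbar/K)) = \prod_{p \geq \text{\^{\i}}_{2,K,S_{B,K}}} \GL_2(\Z_p)$; setting $\zh_{B,K}$ to be the maximum of this constant and all primes dividing $\Delta_B$ and all primes of small norm already thrown into $S_{B,K}$ yields the corollary. The one genuine subtlety — the "main obstacle" — is the bookkeeping needed to ensure that the base field is not enlarged: Theorem~\ref{serre open image theorem for GL2-type abelian varieties} as stated may require passing to an explicit extension $L/K$ to make $A/L$ satisfy its hypotheses, so I would need to argue that for the abelian surfaces coming from $X_\o(K)$ the extension is unnecessary (the compactness of the moduli space making semistability automatic, and the $\GL_2$-type structure being defined over $K$ itself rather than just $\Qbar$), or else absorb the degree $[L:K]$ — which is bounded in terms of $B$ and $K$ only — into the final constant $\zh_{B,K}$ together with the observation that surjectivity over $L$ plus control of $\Gal(L/K)$ gives surjectivity over $K$ for $p$ avoiding the ramification of $L/K$.
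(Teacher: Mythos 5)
There is a genuine gap at your very first step. You claim that properness of (an integral model of) $X_\o$ over $\Z[1/\Delta_B]$ together with N\'eron--Ogg--Shafarevich forces every $A/K$ arising from a point $P\in X_\o(K)$ to have good reduction outside a single finite set $S_{B,K}$ depending only on $B$ and $K$. But properness only yields \emph{potential} good reduction: the bare QM moduli problem always has the automorphism $-1$, so $X_\o$ carries no universal abelian scheme (the "(fine)" in the statement notwithstanding), and the surface "corresponding to $P$" is determined only up to quadratic twist. Concretely, if $A/K$ has good reduction everywhere and $\chi$ is a quadratic character of $\Gal(\Qbar/K)$ ramified at a huge prime $\qfrak\nmid\Delta_B$, then $A^\chi/K$ still has $\o\inj \End_K(A^\chi)$ (the twisting cocycle is valued in the central $\pm 1$), is still non-CM and corresponds to the same point $P$, but has additive reduction at $\qfrak$. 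Hence the bad-reduction locus of the surfaces you must treat is \emph{not} bounded in terms of $B$ and $K$, and you cannot feed them all into Theorem \ref{serre open image theorem for GL2-type abelian varieties} (or into the remark preceding the corollary, whose hypothesis is good reduction outside $S$ over the base field) with one fixed $(2,K,S_{B,K})$. The paper circumvents this entirely: it applies Chevalley--Weil to the level cover $X_\o(10^{10}!)\to X_\o$ to produce an explicit extension $L/K$ depending only on $B$ and $K$ over which the relevant surface acquires full level structure, hence is semistable, hence --- since the nonsplit quaternion algebra $B$ cannot act on $\G_m^{\times 2}$ --- has good reduction \emph{everywhere} over $L$; only then does it invoke Lemma \ref{everything happens over an explicit finite extension} and Theorem \ref{serre open image theorem for GL2-type abelian varieties}, over $L$ with $S$ essentially empty.

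Separately, the "main obstacle" you flag at the end is not an obstacle, and your proposed remedy points the wrong way: the conclusion to be proved is that the image of $\Gal(\Qbar/K)$ is the \emph{full} group $\prod_{p}\GL_2(\Z_p)$, and since $\Gal(\Qbar/L)\subseteq\Gal(\Qbar/K)$, surjectivity after the base change to $L$ immediately implies surjectivity over $K$; no control of $\Gal(L/K)$ or of the ramification of $L/K$ is needed. This trivial containment is exactly how the paper concludes, and working over $L$ also subsumes your separate discussion of the determinant condition.
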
\noindent
Note e.g.\ that by Theorem \ref{the endomorphism ring is one of an explicit finite set of possibilities} the $\o$ for which there is an abelian surface $A/K$ with good reduction everywhere and $\End_K(A)\iso \o$ lie in an explicit finite set.

We conclude with the following two observations.

For the first observation we introduce the following abbreviation.

\begin{defn}
Let $K/\Q$ be a number field. Let $C/K$ be a smooth projective hyperbolic curve over $K$. Then: $C/K$ is \emph{\c{s}irin} if and only if there is a finite extension $L/K$, a smooth projective hyperbolic curve $\widetilde{C}/L$, an \'{e}tale cover $\phi: \widetilde{C}\to C$ defined over $L$, and a nonisotrivial family $\pi: A\to \widetilde{C}$ of abelian varieties which are of $\GL_2$-type over $L(\widetilde{C})$.

We also call the data $(C, K, L, \widetilde{C}, \phi, \pi)$ a \emph{\c{s}irin family}.
\end{defn}\noindent
(For example, all solvable covers of $\P^1$ over a number field are \c{s}irin, via e.g.\ the family given in Section $6$ of \cite{my-first-effective-mordell-paper} and the main theorem of Poonen's \cite{poonen}. More generally, all curves $C/K$ admitting a diagram $C\xleftarrow{\phi} \widetilde{C}\xrightarrow{f} \P^1$ over a finite $L/K$ with $\phi$ \'{e}tale and $f$ Belyi with all ramification indices above $0$, $1$, $\infty$  respectively divisible by $a$, $b$, $c$ with $\frac{1}{a} + \frac{1}{b} + \frac{1}{c} < 1$ are also \c{s}irin, by pulling back a suitable hypergeometric family of abelian varieties.)

Now for the first observation.

\begin{cor}\label{un peu d'effectivite for sirin curves}
Let $K/\Q$ be a number field. Let $C/K$ be a \c{s}irin smooth projective hyperbolic curve over $K$. Then: there is an effectively computable constant $\I_{K,C}\in \Z^+$ depending only on $K$ and $C/K$ such that $\#|C(K)|\leq \I_{K,C}$.
\end{cor}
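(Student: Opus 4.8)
\textbf{Overall strategy.} The plan is to run Szpiro's proof of the Mordell conjecture, with our effective count of $S$-integral points on Hilbert modular varieties (Corollary~\ref{upper bound on the number of S-integral K-points on a Hilbert modular variety}) playing the role that Parshin's appeal to Mumford's gap principle plays in Szpiro's treatment. Fix once and for all a \c{s}irin family $(C, K, L, \widetilde C, \phi, \pi)$ witnessing that $C/K$ is \c{s}irin; every constant below depends only on this fixed datum, hence only on $C/K$. First reduce to $\widetilde C$: since $\phi : \widetilde C \to C$ is finite \'{e}tale, effective Chevalley--Weil (together with effective Hermite--Minkowski) furnishes an effectively computable finite set $\mathcal L$ of number fields, each of bounded degree over $K$ and unramified outside an effectively computable finite set of places, such that every $P \in C(K) \subseteq C(L)$ has a preimage $Q_P \in \phi^{-1}(P)$ defined over some $L'(P) \in \mathcal L$. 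Since $\phi(Q_P) = P$, distinct $P$ yield distinct $Q_P$, so $\#|C(K)| \le \sum_{L' \in \mathcal L} \#|\widetilde C(L')|$, and it suffices to bound each $\#|\widetilde C(L')|$ effectively.

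Fix $L' \in \mathcal L$. Enlarge $S$ to an effectively computable finite set $S'$ of places of $L'$ --- large enough also to swallow the small primes demanded by the footnote to Corollary~\ref{upper bound on the number of S-integral K-points on a Hilbert modular variety} --- so that $\widetilde C_{L'}$ acquires a smooth proper model $\widetilde{\mathcal C}$ over $\o_{L',S'}$ and the abelian scheme $\pi$ spreads out to an abelian scheme $\mathcal A \to \widetilde{\mathcal C}$. Let $\o$ be an order in the totally real field $E$, $[E:\Q] = g$, by which the generic fibre of $\pi$ --- non-CM, since $\pi$ is nonisotrivial --- is of $\GL_2$-type; the $\o$-action extends to $\mathcal A \to \widetilde{\mathcal C}$ by normality of $\widetilde{\mathcal C}$. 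After a harmless further finite \'{e}tale base change of $\widetilde C$, which we absorb into the previous step by enlarging the cover at the outset, we may also endow $\mathcal A$ with the polarization and level data classifying it as a morphism $j : \widetilde{\mathcal C} \to \mathcal H_\o$ into the canonical $\o_{L',S'}$-integral model of the corresponding Hilbert modular stack; here $\o$ and these data depend only on the fixed \c{s}irin family.

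Since $\widetilde{\mathcal C} \to \Spec \o_{L',S'}$ is proper, the valuative criterion gives $\widetilde C(L') = \widetilde{\mathcal C}(\o_{L',S'})$, so $j$ sends each $P \in \widetilde C(L')$ to a point of $\mathcal H_\o(\o_{L',S'})$, and Corollary~\ref{upper bound on the number of S-integral K-points on a Hilbert modular variety} bounds $\#|\mathcal H_\o(\o_{L',S'})| \le \bighardsign_{g,L',S'}$, effectively. Because $\pi$ is nonisotrivial, the map $j_{L'} : \widetilde C_{L'} \to (\mathcal H_\o)_{L'}$ is nonconstant; a nonconstant morphism from a smooth projective curve is finite onto its image, so its fibres have size at most $d$, where $d$ is the degree of $j_{L'}$ onto its image --- a quantity effectively computable from, and depending only on, the fixed \c{s}irin family. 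Hence $\#|\widetilde C(L')| \le d\cdot \bighardsign_{g,L',S'}$, and summing over $\mathcal L$ produces the desired effective $\I_{K,C}$.

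The one place that requires genuine care is the spreading-out step: honestly converting the \c{s}irin family into a morphism to an integral model $\mathcal H_\o$ of a Hilbert modular stack of exactly the shape occurring in Corollary~\ref{upper bound on the number of S-integral K-points on a Hilbert modular variety} --- pinning down the order $\o$ together with the polarization and level data, checking all of it spreads out over $\o_{L',S'}$ after an effectively controlled enlargement of $S$ (using normality of $\widetilde{\mathcal C}$ for the $\o$-action and a finite \'{e}tale base change for the polarization), and verifying that the enlarged bad set and the degree $d$ remain effective --- together with the effective Chevalley--Weil input of the first step. The remaining ingredients (the valuative criterion, and the finiteness of fibres of a nonconstant map out of a curve) are entirely standard, and the overall skeleton is Szpiro's.
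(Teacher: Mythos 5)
Your skeleton (effective Chevalley--Weil to lift $C(K)$ to $\widetilde{C}$-points over controlled fields, spreading out the family to get good reduction outside an effective $S$, an effective count on the moduli side, and a multiplicity bound coming from the degree of the classifying map) is the paper's skeleton. But there is a genuine gap in the step where you route everything through Corollary \ref{upper bound on the number of S-integral K-points on a Hilbert modular variety}: you assert that the generic fibre of $\pi$, being non-CM (which does follow from nonisotriviality), is of $\GL_2$-type via a \emph{totally real} field $E$ of degree $g$, and hence classified by a Hilbert modular stack $\mathcal{H}_\o$ of the shape appearing in that corollary. That inference is false: "non-CM" does not force the degree-$g$ coefficient field to be totally real. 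In this paper "$\GL_2$-type" allows the field to be an imaginary CM field of degree $g$ (see the parenthetical in Theorem \ref{serre open image theorem for GL2-type abelian varieties}, where the centre of $\End_K(A)$ is an order in a CM field of degree $g$ or $g/2$), and such abelian varieties are generically non-CM. Worse, this is exactly the situation in the examples the paper uses to show curves are \c{s}irin: the hypergeometric/cyclotomic families (e.g.\ the family from Section $6$ of \cite{my-first-effective-mordell-paper}, built from $y^p = x(x-1)(x-t)$-type curves) carry multiplication by $\Q(\zeta_p)$, an imaginary CM field, so there is no classifying map to a Hilbert modular stack attached to an order in a totally real field of degree $g$, and Corollary \ref{upper bound on the number of S-integral K-points on a Hilbert modular variety} simply does not apply to them. (The totally definite quaternionic case has the same problem.) So as written your argument only covers \c{s}irin families with honest real multiplication of degree $g$, which excludes the motivating cases.

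The paper's proof avoids this by not going through the Hilbert modular stack at all: after Chevalley--Weil and spreading out, it applies Theorem \ref{upper bound on the number of GL2-type abelian varieties} directly to the fibres $A_{\widetilde{P}}/\widetilde{L}$ (that theorem counts all abelian varieties of $\GL_2$-type over $\Qbar$ with good reduction outside $S$, with no totally-real hypothesis), and then bounds the number of $\widetilde{P}$ giving a fixed isomorphism class by a multiplicity argument: Zarhin's trick applied to the generic fibre of $(A\times A^*)^{\times 4}\to \widetilde{C}$ produces a nonconstant map to $\overline{A}_{8\dim A}$ whose degree (together with Theorem \ref{the endomorphism ring is one of an explicit finite set of possibilities}, to control how many ways a given fibre can arise as a direct factor) gives the effective multiplicity bound --- playing the role your "degree of $j$ onto its image" plays, but with Siegel moduli as the target so that no real-multiplication structure is needed. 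Your argument can be repaired by making exactly this substitution: replace the map to $\mathcal{H}_\o$ and Corollary \ref{upper bound on the number of S-integral K-points on a Hilbert modular variety} by Theorem \ref{upper bound on the number of GL2-type abelian varieties} plus a classifying map to $\overline{A}_g$ (after Zarhin) for the multiplicity count.
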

Of course this corollary is not new, and we have already explained our interest. (We refer the reader to V.\ Dimitrov-Gao-Habegger's \cite{dimitrov-gao-habegger} for far stronger bounds, as well as to Chapter $6$ of \cite{my-thesis} for a bound on the number of large points only.)

Finally let us state the second observation. Write, for $f$ a Hilbert modular eigencuspform of weight $\vec{k}$ and level $\nfrak$ over a totally real field $F$ and $\ell\gg_{F,\vec{k},\nfrak} 1$ a prime of $\Z$, $\rho_{f,\ell}: \Gal(\Qbar/F)\to \GL_2(\Qbar_\ell)$ for the corresponding $\ell$-adic representation. Write also $\rhobar_{f,\ell}: \Gal(\Qbar/F)\to \GL_2(\Fbar_\ell)$ for the (semisimplified) residual representation associated to $\rho_{f,\ell}$. We will use M.\ Dimitrov's abbreviations $(\textbf{Irr}_{\rhobar})$, $(\textbf{LI}_{\rhobar})$, and $(\textbf{LI}_{\Ind\,{\rhobar}})$ --- see Proposition $0.1$ of Dimitrov's \cite{dimitrov} for definitions. (We will define all but $(\textbf{LI}_{\Ind\,{\rhobar}})$ below.)

\begin{cor}[cf.\ Proposition $0.1$ of M.\ Dimitrov's \cite{dimitrov}]\label{an explicit form of a theorem of dimitrov}
Let $F/\Q$ be totally real. Let $\nfrak\subseteq \o_F$ be an ideal of $\o_F$. Let $S_\infty$ be the finite set of infinite places of $F$. Let $\vec{k}\in \Z^{S_\infty}$. Then: there is an effectively computable constant $\dot{\I}_{F,\vec{k},\nfrak}\in \Z^+$ depending only on $F$, $\vec{k}$, and $\nfrak$ such that the following holds.
\begin{itemize}
\item Let $p\geq \dot{\I}_{F,\vec{k},\nfrak}$ be a prime. Let $f$ be a Hilbert modular eigencuspform over $F$ of weight $\vec{k}$ and level $\nfrak$. Then: $\rhobar_{f,p}$ is absolutely irreducible, i.e.\ Dimitrov's condition $(\textbf{\emph{Irr}}_{\rhobar})$ holds.
\item Let $p\geq \dot{\I}_{F,\vec{k},\nfrak}$ be a prime. Let $f$ be a Hilbert modular eigencuspform over $F$ of weight $\vec{k}$ and level $\nfrak$ which is not a theta series. Then: there is a power $q$ of $p$ and a $g\in \GL_2(\Fbar_p)$ such that $g\cdot \SL_2(\F_q)\cdot g^{-1}\subseteq \rhobar_{f,\ell}(\Gal(\Qbar/F))\subseteq \Fbar_\ell^\times\cdot (g\cdot \GL_2(\F_q)\cdot g^{-1})$, i.e.\ Dimitrov's condition $(\textbf{\emph{LI}}_{\rhobar})$ holds.
\item Let $p\geq \dot{\I}_{F,\vec{k},\nfrak}$ be a prime. Let $f$ be a Hilbert modular eigencuspform over $F$ of weight $\vec{k}$ and level $\nfrak$ which is not a theta series and which is not a twist by a character of any of its internal conjugates. Then: Dimitrov's condition $(\textbf{\emph{LI}}_{\Ind\,{\rhobar}})$ holds.
\end{itemize}
\end{cor}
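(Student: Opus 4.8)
The plan is to deduce this corollary by translating Dimitrov's conditions into statements about the abelian variety (or motive) attached to $f$, and then applying the effective isogeny estimate of Theorem \ref{a priori isogeny estimate for GL2-type abelian varieties} together with its consequences. The starting observation is that, after possibly enlarging the coefficient field, the $p$-adic representations $\rho_{f,p}$ for $f$ of fixed weight $\vec k$ and level $\nfrak$ over $F$ are precisely the $p$-adic realizations of a compatible system attached either to an abelian variety of $\GL_2$-type (in the parallel weight $2$ case, or after passing to an appropriate Hilbert--Blumenthal abelian variety) or more generally to a rank-$2$ piece of the cohomology of a Kuga--Sato variety; in all cases the conductor of the compatible system is bounded in terms of $\nfrak$, $F$, and $\vec k$ only, and the field of definition is a fixed number field depending only on these data. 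So I would first reduce, by a standard argument, to controlling residual representations of a compatible system of conductor and field of definition bounded in terms of $(F,\vec k, \nfrak)$.

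Next, for the first bullet (condition $(\textbf{Irr}_{\rhobar})$): if $\rhobar_{f,p}$ is reducible, then $\rho_{f,p}$ is residually reducible, and a classical argument (lattice/pseudo-representation, or the Ribet--Mazur style construction) produces a nontrivial congruence, which at the level of the associated abelian varieties of $\GL_2$-type forces the existence of a nontrivial $p$-torsion subgroup scheme stable under Galois, hence a $K$-isogeny to another abelian variety of $\GL_2$-type in the same finite list. Since by Theorem \ref{upper bound on the number of GL2-type abelian varieties} there are only finitely many such abelian varieties — effectively many, of bounded conductor — and each has only finitely many isogenies of bounded degree to the others, only finitely many primes $p$ can arise this way, and the bound is effective. (Equivalently: reducibility of $\rhobar_{f,p}$ means $f$ is congruent mod $p$ to an Eisenstein series or to a form with smaller "motivic" content, and Theorem \ref{a priori isogeny estimate for GL2-type abelian varieties} bounds the modulus of all such congruences.)

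For the second bullet, Dieudonné's classification of subgroups of $\GL_2(\Fbar_p)$ says that once $(\textbf{Irr}_{\rhobar})$ holds and $p$ is large, the failure of $(\textbf{LI}_{\rhobar})$ forces $\rhobar_{f,p}(\Gal(\Qbar/F))$ to be either dihedral (contained in the normalizer of a torus) or to have projective image one of $A_4, S_4, A_5$ — and the latter three cases are excluded for $p$ larger than an absolute constant since the image contains elements of large order coming from Frobenii with eigenvalue ratios of large multiplicative order (by the bound $|a_\pfrak|\le 2\Nm\pfrak^{(k_0-1)/2}$ and a Chebotarev/effective-Lang--Trotter argument at a controlled prime). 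In the dihedral case, $\rhobar_{f,p}$ is induced from a character of a quadratic extension $F'/F$, which forces a mod-$p$ congruence between $f$ and its own twist by the quadratic character of $F'/F$; since $f$ is not a theta series this is a genuine congruence between two distinct (but isogenous-up-to-twist) $\GL_2$-type abelian varieties, again bounded in modulus by Theorem \ref{a priori isogeny estimate for GL2-type abelian varieties}. Hence $(\textbf{LI}_{\rhobar})$ holds for $p$ past an effective bound. The third bullet, concerning $(\textbf{LI}_{\Ind\,\rhobar})$, is handled analogously but applied to the tensor-induction $\Ind\,\rhobar$ over the compositum of the relevant internal-conjugacy fields: its failure (given that $f$ is not a twist of an internal conjugate) again produces a congruence between distinct $\GL_2$-type abelian varieties over a fixed number field of bounded conductor, whose modulus is effectively bounded.

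The main obstacle I expect is the bookkeeping in the third bullet: making the passage from "$(\textbf{LI}_{\Ind\,\rhobar})$ fails" to "there is a bounded congruence between genuinely distinct objects" fully effective requires controlling the internal-conjugacy group of $f$ (i.e.\ the field $F'/F$ and the twisting character) in terms of $(F,\vec k,\nfrak)$ only, and then keeping track of all the finite-index subgroups of $\prod_\sigma \rhobar^\sigma$ whose existence would violate linear disjointness — this is where one must be careful that the relevant auxiliary abelian varieties (the ones detecting the congruence) still have bounded conductor and bounded field of definition, so that Theorem \ref{a priori isogeny estimate for GL2-type abelian varieties} applies. The small-prime and non-split reduction subtleties are dealt with exactly as in the proof of Theorem \ref{serre open image theorem for GL2-type abelian varieties}, so I would simply cite that argument for those points.
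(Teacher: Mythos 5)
There is a genuine gap, and it is structural: your whole reduction runs through abelian varieties of $\GL_2$-type so that Theorems \ref{a priori isogeny estimate for GL2-type abelian varieties} and \ref{upper bound on the number of GL2-type abelian varieties} can be invoked, but the corollary is stated for Hilbert eigencuspforms of arbitrary weight $\vec{k}\in\Z^{S_\infty}$ and level $\nfrak$. For non-parallel-weight-$2$ forms there is no abelian variety attached to $f$; the compatible system lives in the cohomology of a Kuga--Sato-type variety, and nothing in this paper (good reduction outside $S$, isogeny estimates, counts of $\GL_2$-type abelian varieties, Proposition \ref{the endomorphism algebra is one of an explicit finite set of possibilities}) applies to such motives. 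Even in parallel weight $2$ the passage from $f$ to an abelian variety of conductor and field of definition bounded by $(F,\vec{k},\nfrak)$ needs Jacquet--Langlands/Shimura-curve constructions that you do not supply, and your key inference --- ``$\rhobar_{f,p}$ reducible (or dihedral) gives an isogeny, and since $\Hom$ is generated by isogenies of bounded degree only finitely many $p$ arise'' --- is not valid as stated: a Galois-stable subgroup of order divisible by a huge $p$ is perfectly compatible with $\Hom_K(A,B)$ being generated by small-degree isogenies (think of multiplication by $n$), so extracting an effective contradiction along these lines would require a substantially different and more careful argument than the one sketched.

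The paper's proof goes the other way around and stays entirely at the level of Galois representations of Hilbert modular forms: the first two bullets are exactly the already-established Lemma \ref{irreducibility after some point} (the effectivization of Dimitrov's Propositions $3.1$ and $3.5$, with his Proposition $3.8$ then following from Dickson's classification), whose proof works for any weight because it only uses Raynaud/Fontaine--Laffaille inertial analysis, lifting to algebraic Hecke characters of bounded conductor, and the trick that a congruence modulo a huge prime between algebraic integers of height bounded in terms of $(F,\vec{k},\nfrak)$ at a Faltings--Serre set of primes forces equality, hence reducibility of the $\lambda$-adic representation, a contradiction. The third bullet is proved by effectivizing Dimitrov's Proposition $3.17$ with the same substitution: where Dimitrov argues ``$c(f_\tau,v)\equiv \eps(v)c(f,v)$ modulo infinitely many $\mathcal{P}$ implies equality,'' one requires the congruence only at an explicit Faltings--Serre set of primes $v$ and uses the height bounds (which is where the dependence on $\vec{k}$ enters) to force equality. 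If you want to salvage your approach, you would at minimum have to restrict to parallel weight $2$, carry out the form-to-abelian-variety dictionary with effective control of conductors, and replace the ``bounded isogeny degree implies finitely many bad $p$'' step by an actual argument; the direct route through Lemma \ref{irreducibility after some point} avoids all of this.
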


\subsection{Technique.}
We will only describe the argument proving the isogeny estimate. We will moreover ignore small primes in our sketch.

The first matter is to control a priori the endomorphism algebra of a given $A/K$ of $\GL_2$-type over $K$ with good reduction outside $S$. Now, $A/K$ is either isotypic or else $K$-isogenous to the product of two isotypic abelian varieties which are CM over $\Qbar$. Let us ignore the CM case. Via a Serre tensor product it suffices to treat the case where $A = B^{\times n}$ with $B/K$ $K$-simple and with geometric endomorphisms by a maximal order in $\End_K^0(B)$. There are two cases: either $\End_K^0(B) = F$ a CM field of degree $g$, or else $\End_K^0(B) = D$ a quaternion algebra over a CM field of degree $\frac{g}{2}$.

Let us treat the first case for a moment. It is an easy matter to use Faltings' proof of the Tate conjecture to prove that $F$ is generated over $\Q$ by an explicit finite set of Frobenius traces (at a Faltings-Serre set of primes associated to an auxiliary prime, just as in our proof of Lemma $3.9$ in our \cite{my-first-effective-mordell-paper}). This of course upper bounds the discriminant of $F$ and thus restricts it to an explicit finite set by Hermite-Minkowski.

(In the quaternionic multiplication case we upper bound the centre in exactly the same way and then upper bound the discriminant of the quaternion algebra $D$ by again using Lemma $3.9$ of \cite{my-first-effective-mordell-paper} --- were $D$ ramified above a large prime $p$, one of the mod-$p$ residual representations of $B/K$ would automatically be reducible.)

So this controls the relevant endomorphism algebras a priori.

So now let $A\to C$ be a $K$-isogeny, let $G$ be its kernel, and write $G =: \bigoplus_p G_p$ with $G_p\subseteq A[p^\infty]$. It is natural to try to write $G = \bigoplus_\pfrak G_\pfrak$, but there arises the following scare: we seemingly have no control over $\End_K(C)$, so why should $G$ be stable under $\End_K(B) = \o_F$?

To overcome this we use the following trick: for $g\in \GL_2$, $$g + (\det{g})\cdot g^{-1} = (\tr\,{g})\cdot \id.$$ It is an easy matter to make the determinants of the relevant $2$-dimensional Galois representations cyclotomic on the nose (by just passing to an explicit extension provided by Hermite-Minkowski), so in e.g.\ the case of $\End_K^0(B) = F$, we have that, for all $p$, $$\Z_p[\rho_{B,p}(\Gal(\Qbar/K))]\supseteq \Z[\{\tr(\rho_{B,p}(\Frob_\qfrak)) : \qfrak\in T\}]\otimes_\Z \Z_p,$$ where $\rho_{B,p}: \Gal(\Qbar/K)\to \GL_2(\o_{F,p})$ and $T$ is a Faltings-Serre set of primes associated to a suitably chosen auxiliary prime. Because of this choice of $T$ we know that $\Z[\{\tr(\rho_{B,p}(\Frob_\qfrak)) : \qfrak\in T\}]\subseteq F$ is an order. But said order is manifestly generated by elements of bounded height, whence it has bounded discriminant, whence bounded index in $\o_F$, whence for $p$ sufficiently large we find that $$\Z_p[\rho_{B,p}(\Gal(\Qbar/K))]\supseteq \o_{F,p}.$$

Thus $\Z_p[\rho_{B,p}(\Gal(\Qbar/K))] = \o_{F,p}[\rho_{B,p}(\Gal(\Qbar/K))] = \bigoplus_{\pfrak\vert (p)} \o_{F,\pfrak}[\rho_{B,\pfrak}(\Gal(\Qbar/K))]$.

Now Lemma $3.9$ of \cite{my-first-effective-mordell-paper} gives more than just residual irreducibility --- in fact for all $\pfrak\vert (p)$ the residual image of $\rho_{B,\pfrak}$ contains a conjugate of $\SL_2(\F_p)$. Hence by Nakayama we find that $\o_{F,\pfrak}[\rho_{B,\pfrak}(\Gal(\Qbar/K))] = M_2(\o_{F,\pfrak})$, whence $$\Z_p[\rho_{B,p}(\Gal(\Qbar/K))] = M_2(\o_{F,p}).$$

This means that just from Galois-invariance we automatically get that (for $p$ large) $G_p$ is $M_2(\o_{F,p})$-invariant.

Let $N_p\in \Z^+$ with $G_p\subseteq A[p^{N_p}]$ and let $\Gamma_p\subseteq T_p(A) = T_p(B)^{\oplus n}$ be the preimage of $G_p\subseteq A[p^{N_p}]\simeq T_p(A)/p^{N_p}$. This means that $\Gamma_p\inj T_p(A)\iso (\o_{F,p}^{\oplus 2})^{\oplus n}$ is an $M_2(\o_{F,p})$-submodule (diagonal action). So by Morita it follows that $\Gamma_p = \widetilde{\Gamma}_p^{\oplus 2}$ with $\widetilde{\Gamma}_p\subseteq \o_{F,p}^{\oplus n}$ an $\o_{F,p}$-submodule.

But because $\o_{F,p}$ is a direct sum of principal ideal domains, it follows that there is an $\alpha_p\in M_n(\o_{F,p})$ such that $\widetilde{\Gamma}_p = \alpha_p\cdot \o_{F,p}^{\oplus n}$, whence $\Gamma_p = \alpha_p\cdot T_p(A)$. Since $p^{N_p}\cdot T_p(A)\subseteq \Gamma_p$ it follows that there is a $\beta_p\in M_n(\o_{F,p})$ such that $\beta_p\cdot \alpha_p = p^{N_p}\cdot \id$, and then $G_p = \ker(\beta_p\actson A[p^\infty])$. Of course (by shifting $\alpha_p$ suitably) we may equally well replace $\beta_p$ by any element of $\GL_n(\o_{F,p})\cdot \beta_p$.

By considering the class $(\beta_p)_p$ inside $\left(\prod_p \GL_n(\o_{F,p})\right)\backslash \GL_n(\A_F^{\text{fin.}})/\GL_n(F)$ and applying Minkowski we conclude that there is a $\gamma\in M_n(\o) = \End_K(A)$ such that $\ker{\gamma}\supseteq G$ and $[\ker{\gamma} : G]\ll_{g,K,S} 1$.

This exactly says that there is a $K$-isogeny $C\to A$ of degree bounded by $\ll_{g,K,S} 1$, namely the $K$-isogeny corresponding to the $K$-subgroup $(\ker{\gamma})/G\subseteq A/G = C$ --- QED isogeny estimate.

When $\End_K(B)$ is a maximal order $\o$ in a quaternion algebra the same argument goes through with minor modifications (namely by replacing $M_2(\o_{F,p})$ with $\o_p := \o\otimes_\Z \Z_p$ throughout --- this also explains why we unnecessarily considered the adelic double coset space in the case $\End_K^0(B) = F$).

\subsection{Remarks.}

Let us quickly explain why the isogeny estimates of Raynaud/Faltings and Masser-W\"{u}stholz (as made explicit by Bost, David, and Gaudron-R\'{e}mond) do not suffice to prove Theorem \ref{upper bound on the number of GL2-type abelian varieties} (or Theorem \ref{the endomorphism ring is one of an explicit finite set of possibilities} etc.).

A coarse version (that is, ignoring the explicit dependence of the constant which was also given by Raynaud) of Raynaud's effectivization of Faltings' isogeny theorem is the following.\footnote{Koshikawa's aforementioned improvement --- i.e.\ Theorem $9.8$ of his \cite{koshikawa} --- amounts, in these coarse terms, to removing the dependence of $C_{g,K,S}$ on $S$.}
\begin{thm}[Raynaud --- cf.\ Th\'{e}or\`{e}me $4.4.9$ of \cite{raynaud}]\label{raynaud's isogeny estimate}
Let $g\in \Z^+$. Let $K/\Q$ be a number field. Let $S$ be a finite set of places of $K$. Then: there is an explicit (thus effectively computable) constant $C_{g,K,S}\in \Z^+$ depending only on $g$, $K$, and $S$ such that the following holds.
\begin{itemize}
\item Let $A,B/K$ be $g$-dimensional abelian varieties over $K$ which are $K$-isogenous and have good reduction outside $S$. Then: $|h(A) - h(B)|\leq C_{g,K,S}$.
\end{itemize}
\end{thm}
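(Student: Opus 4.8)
\noindent\emph{Proof proposal.}
The plan is to express $h(A)-h(B)$, for a fixed $K$-isogeny $\phi\colon A\to B$, as a sum of purely local terms attached to $\phi$, and then to bound each term with the dependence on $\deg\phi$ removed --- it is the last point that is the whole content. I would fix N\'eron models $\mathcal A,\mathcal B$ over $\o_K$, extend $\phi$ to $\mathcal A\to\mathcal B$, and use Faltings' computation of the behaviour of the (stable) height under an isogeny in the form
\[
h(A)-h(B)=\frac{1}{[K:\Q]}\sum_{v\ \mathrm{finite}}\ell_v(\phi)\log\#k_v-\tfrac12\log\deg\phi,\qquad
\ell_v(\phi):=\mathrm{length}_{\o_v}\coker\bigl(\phi^*\colon\omega_{\mathcal B/\o_v}\to\omega_{\mathcal A/\o_v}\bigr),
\]
the term $-\tfrac12\log\deg\phi$ packaging the archimedean contribution (the covolume of the period lattice changes by the factor $\deg\phi$ at each infinite place). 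From the identities $\ell_v(\phi)+\ell_v(\phi^\vee)=v(\deg\phi)$ and $\sum_{v\mid p}v(\deg\phi)\log\#k_v=[K:\Q]\,v_p(\deg\phi)\log p$ one gets $0\le\frac{1}{[K:\Q]}\sum_v\ell_v(\phi)\log\#k_v\le\log\deg\phi$, hence the crude bound $|h(A)-h(B)|\le\tfrac12\log\deg\phi$; writing the right-hand side as $\sum_p\Delta_p$ with $|\Delta_p|\le\tfrac12 v_p(\deg\phi)\log p$, the task is to bound $\sum_p|\Delta_p|$ in terms of $g$, $K$, $S$ alone.

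First I would reduce the isogeny. Since $A/A[p]\cong A$, whenever $\ker\phi$ contains $A[p]$ we may replace $\phi$ by $\phi'$ with $\phi=\phi'\circ[p]$, again a $K$-isogeny $A\to B$; iterating, assume $\ker\phi\supseteq A[p]$ for no $p$. Then I would invoke a residual rigidity input of the kind used to prove the lemma cited in the introduction (a Faltings--Serre argument exploiting good reduction outside $S$): there is an explicitly computable $P=P(g,K,S)$ such that, for every $g$-dimensional $A/K$ with good reduction outside $S$ and every prime $p\ge P$, the only finite $K$-subgroup schemes of $A[p^\infty]$ are the $A[p^k]$ --- for $p$ large a smaller $K$-rational subgroup of $A[p]$ would, after the usual manipulations, manufacture an auxiliary variety of good reduction outside a controlled set carrying an unexpected rational $p$-isogeny, contradicting an effective Mazur/Shafarevich-type estimate. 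With this, the reduced $\phi$ has $\deg\phi$ supported on the primes $p<P$.

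It then remains to bound, for each $p<P$ (so $v\notin S$ for every $v\mid p$), the exponent $v_p(\deg\phi)$ and the imbalance $\Delta_p=\frac{\log p}{[K:\Q]}\sum_{v\mid p}\ell_v(\phi)f_v-\tfrac12 v_p(\deg\phi)\log p$. Here the $p$-part of $\ker\phi$ spreads to a finite flat subgroup scheme of the abelian scheme $\mathcal A_{/\o_v}$ with abelian-scheme quotient $\mathcal B_{/\o_v}$, and I would feed this into Raynaud's classification of finite flat group schemes over $\o_v$ together with the fact that the Hodge filtration on the Dieudonn\'e crystal of an abelian scheme occupies the balanced position (Hodge--Tate weights $0$ and $1$, each with multiplicity $g$): this should both bound $v_p(\deg\phi)$ in terms of $g$ and the ramification of $p$ in $K$, and pin $\ell_v(\phi)$ to within $O_g(e_v)$ of $\tfrac12 v(\deg\phi)$, giving $|\Delta_p|\le c(g,K)$ and hence $\sum_{p<P}|\Delta_p|\le c(g,K)\,\pi(P)\le C'(g,K,S)$. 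The places $v\in S$ contribute a quantity bounded in terms of $g$ and $\#S$, since there $\phi$ restricts to an isogeny of the connected N\'eron fibres, acting through their toric and abelian parts. Adding up yields $|h(A)-h(B)|\le C_{g,K,S}$.

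The step I expect to be the main obstacle is exactly the local estimate of the previous paragraph at primes of good reduction dividing $\deg\phi$: the naive input gives only $|\Delta_p|\le\tfrac12 v_p(\deg\phi)\log p$, which is worthless without prior control of the degree, and forcing the contribution to be bounded \emph{independently of the degree} is precisely where one must use the fine structure of finite flat (equivalently, $p$-divisible) group schemes over $\o_v$ rather than merely their orders --- this is the technical heart of Raynaud's Th\'eor\`eme $4.4.9$. The dependence of the final constant on $S$ then enters only through the bounded set of primes $p<P(g,K,S)$ and through the contribution of the finitely many places in $S$.
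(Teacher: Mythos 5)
There is no proof of this statement in the paper to compare against: Theorem \ref{raynaud's isogeny estimate} is quoted (in deliberately coarse form) as Raynaud's Th\'eor\`eme $4.4.9$, and it is only used in the Remarks to explain why such height-difference bounds do \emph{not} suffice for the paper's purposes. Judged on its own, your proposal has two genuine gaps. First, the ``residual rigidity input'' --- that there is an effective $P(g,K,S)$ such that for $p\geq P$ the only finite $K$-subgroup schemes of $A[p^\infty]$ are the $A[p^k]$ --- is false for general abelian varieties (take $A=E_1\times E_2$: then $E_1[p]\times 0$ is a $K$-subgroup for every $p$), and for simple $A$ an effective, uniform statement of this kind is not available; it is essentially the sort of result this paper works hard to establish \emph{only} for abelian varieties of $\GL_2$-type, so invoking it to prove Raynaud's unconditional theorem is both wrong as stated and circular in spirit. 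Raynaud's theorem holds with no control whatsoever on the primes dividing $\deg\phi$, and any correct proof must cope with that.

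Second, even after your reduction, the claim that $v_p(\deg\phi)$ can be bounded in terms of $g$ and the ramification of $p$ in $K$ is false: on $A=E\times E$ the isogeny $[p^n]\times[1]$ has kernel $E[p^n]\times 0$, which contains no $A[p]$, has unbounded order $p^{2n}$, and connects varieties of good reduction outside $S$ (here the height difference is $0$ --- the local terms cancel, the degree does not get bounded). The actual content of Raynaud's theorem is precisely that $\sum_v \ell_v(\phi)\log\#k_v$ balances against $\tfrac12[K:\Q]\log\deg\phi$ up to $O_{g,K,S}(1)$ \emph{uniformly in the degree}, via a fine analysis of the kernel as a finite flat group scheme (its invariant differentials/diff\'erente relative to the Hodge filtration), and your sketch defers exactly this step to ``Raynaud's classification \ldots should both bound $v_p(\deg\phi)$ \ldots and pin $\ell_v(\phi)$'' --- i.e.\ it assumes the crux rather than proving it. The Faltings-type formula for the height change and the reduction $\ker\phi\not\supseteq A[p]$ are fine as framing, but as it stands the proposal does not constitute a proof.
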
\noindent
One concludes finiteness by an application of Northcott. But the resulting estimate on the size of the finite set of abelian varieties $B/K$ which are $K$-isogenous to $A/K$ \emph{depends on $h(A)$}. This is why Szpiro needs to apply Mumford's gap principle in his \cite{szpiro} --- regardless of the interval, at least there is a uniform (here meaning only independent of $h(A)$) bound on the number of points on a hyperbolic \emph{curve} whose heights lie in an interval of given length.

This would not be an issue if we had an effective estimate $h(A)\ll_{g,K,S} 1$ available --- but of course if we had this then we would not be asking the question, since such an estimate would amount to a solution of the effective Shafarevich conjecture in this case, a problem which is open (though see \cite{my-first-effective-mordell-paper}).

Now let us turn to the Masser-W\"{u}stholz isogeny estimate, as improved and made explicit by Gaudron-R\'{e}mond (after work of Bost and David). Again, we will state a coarse version for the sake of explanation.
\begin{thm}[Gaudron-R\'{e}mond --- cf.\ Th\'{e}or\`{e}me $1.4$ of \cite{gaudron-remond}]\label{masser-wustholz}
Let $g\in \Z^+$. Let $K/\Q$ be a number field. Let $H\in \R^+$. Then: there is an explicit (thus effectively computable) constant $C_{g,K,H}\in \Z^+$ depending only on $g$, $K$, and $H$ such that the following holds.
\begin{itemize}
\item Let $A/K$ be a $g$-dimensional abelian variety over $K$ with $h(A)\leq H$. Let $B/K$ be $K$-isogenous to $A/K$. Then: there is a $K$-isogeny $\phi: A\to B$ with $\deg{\phi}\leq C_{g,K,H}$.
\end{itemize}
\end{thm}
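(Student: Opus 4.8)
\emph{Proof proposal.} This is the Masser-W\"{u}stholz isogeny theorem in the explicit shape given by Bost, David and Gaudron-R\'{e}mond, so what I would do is reconstruct their argument; let me describe its skeleton. The guiding idea is to convert the existence of a low-degree isogeny $A\to B$ into the existence of a low-degree abelian subvariety of $A\times B$, and then to bound the degree of the latter by transcendence theory --- concretely, by a zero estimate played against period estimates, organized through Bost's slope inequalities. (The purely Galois-theoretic, Honda-Tate style reasoning used in the body of the present paper is not available here, since $A$ and $B$ are arbitrary abelian varieties with no extra structure; and Theorem \ref{raynaud's isogeny estimate} does not help, as it only compares heights inside a fixed isogeny class and produces no control on the isogeny itself.)

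First I would reduce and normalize. Fix an ample symmetric line bundle $L$ on $A\times B$. Given any isogeny $\psi\colon A\to B$, its graph $C_\psi=\{(a,\psi(a))\}$ is an abelian subvariety of $A\times B$ mapping isogenously onto both factors, and conversely any abelian subvariety $C\subseteq A\times B$ for which $\pi_A|_C$ (equivalently $\pi_B|_C$) is an isogeny yields a $K$-isogeny $A\to B$ whose degree is controlled by $\deg_L C$ and by the polarization degrees. So it suffices to produce \emph{one} such $C$ with $\deg_L C\ll_{g,K,H}1$. Applying Zarhin's trick I may assume in addition that $A$ and $B$ are principally polarized --- at the cost of replacing $g$ by $8g$ and $H$ by $8H+O(g)$ --- take $L$ to be the product principal polarization on $A\times B$, and record $h(A\times B)\le h(A)+h(B)+O(g)\le 2H+O(g)$ for the stable Faltings height.

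The analytic input is a period estimate: at each archimedean place $A\times B\iso\C^{2g}/\Lambda$, and one needs a basis of $\Lambda$ by periods whose archimedean norms --- and whose covolume --- are bounded above and below in terms of $\exp(c(g)\max(1,h(A\times B)))$, hence in terms of $H$; this, together with height bounds for small torsion points, is what lets the Faltings height enter, and explicit such estimates are due to Masser-W\"{u}stholz, Bost, Graftieaux and Gaudron-R\'{e}mond. Granting it, the transcendence core runs as follows. For parameters $D,T$ to be optimized, use a Siegel lemma / the slope inequality for the jet-evaluation map
\[
H^0(A\times B,L^{\otimes D})\longrightarrow \bigl(\text{jets of order }\le T\text{ along }C\text{ at }0\bigr)
\]
to build a nonzero global section of $L^{\otimes D}$ vanishing to order $>T$ along $C$ --- possible, by Riemann-Roch against the number of jet conditions, once $D^{2g}$ suitably dominates $(\deg_L C)\cdot T^{g}$, with the arithmetic slopes controlled by the period data above. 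A zero estimate (Masser-W\"{u}stholz; Philippon's explicit version) then says that such a section cannot exist for $D,T$ in the chosen range \emph{unless} $C$ has small degree, producing an obstruction subgroup that forces $\deg_L C\le(\text{polynomial in }D,T)$; at the optimal choice of $D,T$ dictated by the period bounds this collapses to $\deg_L C\le C_{g,K,H}$. Finally $C$ is a priori defined over $\overline{K}$, but its Galois orbit has bounded degree, so the $K$-span of the conjugates $C^\sigma$ inside $A\times B$ is a $K$-rational abelian subvariety of bounded degree projecting isogenously onto each factor, hence a $K$-isogeny $A\to B$ of degree $\le C_{g,K,H}$; the dependence on $K$ beyond $[K:\Q]$ is inessential and can be removed.

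The main obstacle is precisely the calibration in the previous paragraph: the zero estimate and the period/height estimates must be balanced so that the extrapolation step (``the section cannot vanish that much'') actually closes, and extracting \emph{explicit} constants from this balance --- which is the whole point of \cite{gaudron-remond} --- requires Philippon's explicit zero estimate together with the explicit comparison of periods and Faltings height of Bost and Gaudron-R\'{e}mond, and is decidedly nontrivial.
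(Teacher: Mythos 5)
You have set out to reconstruct a proof of a theorem that this paper does not prove: Theorem \ref{masser-wustholz} is quoted, in deliberately coarse form, from Gaudron--R\'{e}mond (Th\'{e}or\`{e}me $1.4$ of \cite{gaudron-remond}) solely to explain in the Remarks why height-dependent isogeny estimates cannot yield the height-free Theorem \ref{a priori isogeny estimate for GL2-type abelian varieties}; the intended ``proof'' here is the citation, and the paper's own machinery (Faltings--Serre sets, residual large image, the adelic class-group argument) plays no role in it and could not, since $A$ and $B$ carry no $\GL_2$-structure. So there is nothing in the paper to compare your argument against, and you should be aware that what you have written is an outline of the Masser--W\"{u}stholz/Bost/Gaudron--R\'{e}mond transcendence proof with the substantive content (the period theorem, Philippon's explicit zero estimate, and their calibration through slope inequalities) delegated to the very references being effectivized.

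Judged on its own terms the skeleton is right --- isogenies correspond to abelian subvarieties of $A\times B$ of bounded degree projecting isogenously to both factors, Zarhin's trick reduces to the principally polarized case, and the auxiliary-section-versus-zero-estimate mechanism with period bounds in terms of the Faltings height is indeed how the degree bound is obtained --- but two steps are blurred in a way that matters if you wanted to complete it. First, the section is constructed to vanish to high order along the graph of the \emph{given} isogeny, whose degree is exactly what you do not control; it is the obstruction subvariety produced by the zero estimate, not that graph, which has bounded degree, and one must then argue (this is Masser--W\"{u}stholz's lemma on optimal subvarieties, refined by Gaudron--R\'{e}mond) that some such subvariety still surjects onto both factors --- your sketch conflates the two objects. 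Second, the endgame needs more care than stated: passing from a $\overline{K}$-rational subvariety of bounded degree to a $K$-isogeny, and passing back from the Zarhin-trick variety $(A\times A^*)^{\times 4}$ to an isogeny $A\to B$ of bounded degree, are genuine (if standard) arguments, not immediate from ``the Galois orbit has bounded degree.'' None of this reflects on the paper, which simply cites the result; but as a blind proof your text identifies the ingredients without supplying the proof.
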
\noindent
In fact one can take e.g.\ $C_{g,K,H} := \max\left(10^{10}\cdot [K:\Q], H\right)^{O_g(1)}$, to be a bit more explicit.

One concludes finiteness because there are a finite number of $K$-subgroups of $A$ of size at most $C_{g,K,h(A)}$, but again the resulting bound depends on $h(A)$, and of course this is the same issue that arose in trying to use Raynaud's isogeny theorem.

Having explained this point, let us now prove the theorems.

\section{Acknowledgements.}
Lemma \ref{irreducibility after some point} is based on Lemma $9.3.3$ of Chapter $9$ of the author's Ph.D. thesis at Princeton University \cite{my-thesis}. I would like to thank both my advisor Manjul Bhargava and Peter Sarnak for their patience and encouragement. I would also like to thank Jacob Tsimerman and Nina Zubrilina for informative discussions. Finally I thank the National Science Foundation (via their grant DMS-$2002109$), Columbia University, and the Society of Fellows for their support during the pandemic.

\section{Results from our \cite{my-first-effective-mordell-paper}.}

We will cite four results from our \cite{my-first-effective-mordell-paper}.

\subsection{Our reducibility estimate.\label{irreducibility after some point section}}

The first is Lemma $3.9$ of our \cite{my-first-effective-mordell-paper}. We will state it here in the particular case of abelian varieties only.

\begin{lem}[Cf.\ Lemma $3.9$ of \cite{my-first-effective-mordell-paper}.]\label{irreducibility after some point}
Let $d\in \Z^+$. Let $K/\Q$ be a number field. Let $N\in \Z^+$.

Then: there is an explicit (thus effectively computable) constant $C_{d,K,N}\in \Z^+$ depending explicitly and only on $d$, $K$, and $N$ such that the following holds.
\begin{itemize}
\item Let $p\geq C_{d,K,N}$ be a prime of $\Z$. Let $E/\Q$ be a number field of degree $[E:\Q]\leq d$. Let $\pfrak\subseteq \o_E$ be a prime of $\o_E$ with $\pfrak\vert (p)$. Let $A/K$ be an abelian variety of $\GL_2(E)$-type over $K$ which does not admit sufficiently many complex multiplications over $\Qbar$ and has good reduction outside the primes of $K$ dividing $(N)$. Then: writing the mod-$\pfrak$ residual representation of $A/K$ as $\overline{\rho}_{A,\pfrak} : \Gal(\Qbar/K)\to \GL_2(\o_E/\pfrak)$, there is a $g\in \GL_2(\o_E/\pfrak)$ and a subfield $\F_q\subseteq \o_E/\pfrak$ such that: $$g\cdot \SL_2(\F_q)\cdot g^{-1}\subseteq \overline{\rho}_{A,\pfrak}(\Gal(\Qbar/K))\subseteq (\o_E/\pfrak)^\times\cdot (g\cdot \GL_2(\F_q)\cdot g^{-1}).$$
\end{itemize}
\end{lem}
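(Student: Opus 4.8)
\noindent The plan is to run the classical Serre--Momose dichotomy. Write $\overline{\rho}:=\overline{\rho}_{A,\pfrak}\colon\Gal(\Qbar/K)\to\GL_2(\F_q)$ with $\F_q:=\o_E/\pfrak$, and $\Gamma:=\overline{\rho}(\Gal(\Qbar/K))$. First I would record three structural facts, all valid once $p$ exceeds an explicit threshold depending on $d$, $K$, $N$. (i) The determinant $\det\overline{\rho}$ is the reduction of $\chi_{\mathrm{cyc}}\cdot\psi$ for a finite-order character $\psi$ of order $\ll_{d}1$ and conductor dividing a fixed power of $(N)$ (it records the Weil pairing and a polarization). (ii) $\overline{\rho}$ is unramified outside $(Np)$, and the restrictions of its Jordan--H\"older constituents to inertia at the primes dividing $(N)$ have order $\ll_{d}1$, since $A$ acquires semistable reduction over an extension of degree $\ll_{d}1$. (iii) Once $p\nmid N$ and $p$ is unramified in $K$, $A$ has good reduction at every $v\mid p$, so $A[\pfrak]$ prolongs to a finite flat group scheme over $\o_{K_v}$ with $e(v/p)=1<p-1$; by Raynaud's classification $\overline{\rho}|_{I_v}$ is, up to semisimplification, built from fundamental characters of level $\leq 2$ whose exponents are governed by the Hodge--Tate weights $\{0,1\}$ of $A$, and in particular the image of $\overline{\rho}(I_v)$ in $\PGL_2(\F_q)$ is cyclic of order $\gg_{d,K}p$.

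\noindent By Dickson's classification of finite subgroups of $\PGL_2$, $\Gamma$ falls into one of four cases. If $\Gamma$ contains a conjugate of $\SL_2(\F_{q'})$ for some subfield $\F_{q'}\subseteq\F_q$, then $\Gamma$ lies between that conjugate and the preimage in $\GL_2(\F_q)$ of $\PGL_2(\F_{q'})$ — which is $(\o_E/\pfrak)^\times\cdot(g\GL_2(\F_{q'})g^{-1})$ — so the conclusion of the lemma holds; hence it suffices to rule out the other three cases for $p$ large. If the image of $\Gamma$ in $\PGL_2(\F_q)$ is one of $A_4,S_4,A_5$: this is impossible by (iii), since that image contains the image of $\overline{\rho}(I_v)$, which has order $>60$ for $p$ large.

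\noindent Suppose the image of $\Gamma$ in $\PGL_2(\F_q)$ is dihedral, so $\overline{\rho}\iso\Ind_L^K\chi$ for a quadratic $L/K$ and a character $\chi$ of $\Gal(\Qbar/L)$. By (iii) the image of $I_v$ in $\Gal(L/K)$ is trivial for every $v\mid p$ — its image in the dihedral quotient is cyclic of order $\gg_{d,K}p\geq 3$, hence lies in the index-$2$ rotation subgroup — so $L/K$ is unramified outside $(N)$ and lies in an explicit finite list. Now $\chi$ is tamely bounded away from $p$ and, by (iii) applied to $\Ind_L^K\chi$, of type $\{0,1\}$ above $p$; twisting by a suitable power of the cyclotomic character makes it unramified above $p$, whence it factors through a ray-class group of $L$ of conductor dividing a fixed power of $(N)$ — of order $\ll_{d,K,N}1$ — so $\chi$ lies in an explicit finite set of reductions of products of a bounded-order character with a power of the cyclotomic character. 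Choosing by effective Chebotarev a prime $\qfrak\nmid(Np)$ of norm $\ll_{d,K,N}1$ inert in $L$, the Frobenius trace $a_\qfrak\in\o_E$ of the relevant $2$-dimensional piece of $A$ satisfies $a_\qfrak\equiv\tr\,(\Ind_L^K\chi)(\Frob_\qfrak)=0\pmod\pfrak$; since $|\sigma(a_\qfrak)|\leq 2\sqrt{N\qfrak}\ll_{d,K,N}1$ for every embedding $\sigma$, either $a_\qfrak\neq0$ and $p\mid N_{E/\Q}(a_\qfrak)$ bounds $p$, or $a_\qfrak=0$ for all such $\qfrak$, which by an effective Faltings--Serre argument forces $A$ to admit sufficiently many complex multiplications over $\Qbar$ — against the hypothesis. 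The reducible case $\overline{\rho}^{\mathrm{ss}}=\chi_1\oplus\chi_2$ is handled the same way directly over $K$: class field theory over $K$, together with (i)--(iii), pins $\chi_1,\chi_2$ down up to a bounded-order twist and a power of the cyclotomic character, giving $a_\qfrak\equiv\chi_1(\Frob_\qfrak)+\chi_2(\Frob_\qfrak)\pmod\pfrak$ with the right-hand side ranging over an explicit finite set of algebraic numbers of height $\ll_{d,K,N}1$; a prime $\qfrak$ of bounded norm at which the congruence is not an equality bounds $p$, and the alternative again yields enough complex multiplications. Enlarging $C_{d,K,N}$ beyond all the finitely many thresholds that have appeared finishes the proof.

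\noindent The step I expect to be the main obstacle is (iii): pinning down $\overline{\rho}_{A,\pfrak}|_{I_v}$ at the primes $v\mid p$ precisely enough for the class-field-theory bookkeeping — in particular tracking the individual local cyclotomic exponents — when $\F_q$ is a proper extension of $\F_p$ and several primes of $K$ lie above $p$, where the natural tool is a computation with $\F_q$-coefficient Breuil--Kisin modules over $\o_{K_v}$. As elsewhere in the paper I would sidestep the integral $p$-adic Hodge theory with a trick, retaining only the robust consequences actually needed: the lower bound on the order of the $\PGL_2$-image of inertia at $p$, and the type-$\{0,1\}$ constraint driving the class field theory. The remaining inputs — effective Chebotarev for $K$ and for the quadratic fields of the dihedral case, an effective bound on ray-class groups of bounded conductor over those fields, and an effective Faltings--Serre / isogeny estimate detecting geometric complex multiplication from a controlled set of Frobenius traces — are standard and only inflate $C_{d,K,N}$.
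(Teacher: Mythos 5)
Your overall skeleton matches the paper's: Dickson's classification, elimination of the exceptional projective images by the size of inertia at $p$ via Raynaud, and then, in the reducible and dihedral branches, lifting the residual characters to global characters of bounded conductor so that a trace comparison at a Faltings--Serre set (bounded height versus congruence modulo the huge $\pfrak$) forces an equality contradicting the non-CM hypothesis. The gap is in the lifting step, which is the crux of the paper's proof. You assert that, after the inertial analysis, the constituents $\chibar_1,\chibar_2$ (resp.\ the inducing character $\chibar$ of $\Gal(\Qbar/L)$) are congruent to a character of the form (bounded-order character of conductor dividing a fixed power of $N$) times (an integral power of the mod-$p$ cyclotomic character). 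This is false in general: Raynaud only gives that at each place $v\vert p$ the restriction to $I_v$ is a multiplicity-free product of fundamental characters, and the exponents may differ from one $v$ above $p$ to another (level-two fundamental characters may also occur). No single global cyclotomic power untwists such a character to something unramified above $p$; the characters that actually arise are reductions of algebraic Hecke characters whose infinity type is a \emph{partial} product of embeddings, i.e.\ genuinely CM-type when $K$ (or the quadratic $L$) contains a CM subfield --- exactly the configuration your finite list omits. Since your final congruence-versus-equality comparison is made against this incomplete list, the reducible branch does not close as written.

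The paper supplies the missing mechanism: via Artin reciprocity the residual character is trivial on an explicit finite-index subgroup of $\o_K^\times$, so the candidate product of embeddings is $\equiv 1$ modulo a prime above $\pfrak$ on generators of that subgroup; those generators have explicitly bounded height and $p$ is huge, so the congruence is an equality, the product of embeddings is trivial on a finite-index subgroup of units, and the classification of algebraic Hecke characters then produces an actual Hecke character $\psi$ (of possibly non-cyclotomic infinity type) matching the inertial data; twisting by $\psi^{-1}$ leaves a finite-order character of bounded conductor, and only then does the Faltings--Serre comparison of Frobenius traces (at an auxiliary prime $\lambda$) yield reducibility of $\rho_{A,\lambda}$ and the contradiction with the non-CM hypothesis. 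Two smaller points: your worry about Breuil--Kisin modules is misplaced for this lemma --- here $p$ is large, $e<p-1$, and only the mod-$\pfrak$ representation matters, so Raynaud suffices (such integral $p$-adic Hodge theory issues enter the paper only in the small-prime lifting lemma); and your dihedral branch can in fact be closed without pinning down $\chi$ at all, since $a_\qfrak\equiv 0\pmod{\pfrak}$ at the inert primes of a Faltings--Serre set either bounds $p$ or forces $\rho_{A,\lambda}\simeq\rho_{A,\lambda}\otimes\eta_{L/K}$, hence induction from $L$ and sufficiently many complex multiplications. But the reducible branch genuinely needs the Hecke-character lifting, so the gap stands.
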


In fact we will prove the following stronger statement in Section \ref{proof of serre open image theorem section}.

\begin{thm}\label{huge image after some point}
Let $g\in \Z^+$. Let $K/\Q$ be a number field. Let $N\in \Z^+$.

Then: there is an explicit (thus effectively computable) constant $C_{g,K,N}\in \Z^+$ depending explicitly and only on $g$, $K$, and $N$ such that the following holds.
\begin{itemize}
\item Let $p\geq C_{g,K,N}$ be a prime of $\Z$. Let $A/K$ be a $\Qbar$-simple abelian variety of $\GL_2$-type over $K$ which does not admit sufficiently many complex multiplications over $\Qbar$ and has good reduction outside the primes of $K$ dividing $(N)$. Let $E$ be the centre of $\End_K^0(A)$ (thus $E/\Q$ is a CM field of degree either $g$ or $\frac{g}{2}$). Let $\pfrak\subseteq \o_E$ be a prime of $\o_E$ with $\pfrak\vert (p)$. Then: writing the mod-$\pfrak$ residual representation of $A/K$ as $\overline{\rho}_{A,\pfrak} : \Gal(\Qbar/K)\to \GL_2(\o_E/\pfrak)$, $$\rhobar_{A,\pfrak}(\Gal(\Qbar/K))\supseteq \SL_2(\o_E/\pfrak).$$
\end{itemize}
\end{thm}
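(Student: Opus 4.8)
The plan is to bootstrap from Lemma \ref{irreducibility after some point}, which already gives, for $p$ large, a $g\in \GL_2(\o_E/\pfrak)$ and a subfield $\F_q\subseteq \o_E/\pfrak$ with $g\cdot\SL_2(\F_q)\cdot g^{-1}\subseteq\rhobar_{A,\pfrak}(\Gal(\Qbar/K))\subseteq(\o_E/\pfrak)^\times\cdot(g\cdot\GL_2(\F_q)\cdot g^{-1})$. After conjugating we may assume $g=\id$, so that the image $H$ sits between $\SL_2(\F_q)$ and $\F_\ell^\times\cdot\GL_2(\F_q)$, where $\ell$ is the residue characteristic and $\F_\ell=\o_E/\pfrak$ when $\pfrak$ has residue degree $1$ — the general case being $\F_{\ell^f}=\o_E/\pfrak$. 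The content of the theorem is that in fact $q=\#(\o_E/\pfrak)$, i.e.\ the field of definition of the $\SL_2$-part is not a proper subfield. First I would translate this into a statement about the determinant and about Frobenius traces: the subfield $\F_q$ is exactly the field generated by the traces $\tr\rhobar_{A,\pfrak}(\Frob_\qfrak)$ (more precisely their ratios with appropriate square roots of the determinant), so it suffices to show that these traces generate all of $\o_E/\pfrak$ for $p$ large.

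The key step is to run the same Faltings--Serre argument that controls the endomorphism algebra, but now mod $\pfrak$ rather than over $\Q$. Using Faltings' proof of the Tate/semisimplicity conjectures one shows that the field $E$ (the centre of $\End_K^0(A)$) is generated over $\Q$ by traces $\tr\rho_{A,\pfrak}(\Frob_\qfrak)$ for $\qfrak$ ranging over an effectively bounded Faltings--Serre set $T$ of primes of $K$ attached to an auxiliary prime — this is exactly the mechanism of Lemma $3.9$ of \cite{my-first-effective-mordell-paper}, and it gives that $\Z[\{\tr\rho_{A,\pfrak}(\Frob_\qfrak):\qfrak\in T\}]\subseteq\o_E$ is an order whose index in $\o_E$ is bounded in terms of $g$, $K$, $N$ only (because its generators have bounded height, hence bounded discriminant, hence bounded index by Hermite--Minkowski-type estimates). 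Reducing mod $\pfrak$: for $p$ larger than this index, the reductions $\overline{\tr\rho_{A,\pfrak}(\Frob_\qfrak)}$ generate $\o_E/\pfrak$ as a ring. Combined with the determinant being (a bounded twist of) the cyclotomic character — which one arranges by passing to an explicit finite extension, or absorbs into the constant — this forces the field generated by the Frobenius traces of $\rhobar_{A,\pfrak}$ to be all of $\o_E/\pfrak$, hence $q=\#(\o_E/\pfrak)$ and $\SL_2(\o_E/\pfrak)\subseteq\rhobar_{A,\pfrak}(\Gal(\Qbar/K))$.

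The main obstacle, I expect, is making the passage from "the order generated by a bounded Faltings--Serre set of Frobenius traces has bounded index in $\o_E$" to "its reduction mod $\pfrak$ is all of $\o_E/\pfrak$" genuinely uniform and effective: one must control not merely that the traces generate a finite-index subring of $\o_E$, but that the \emph{index} is effectively bounded independently of $A$, so that a single threshold $C_{g,K,N}$ works for all $A$ simultaneously. This requires an effective version of Faltings' isogeny/semisimplicity input over the relevant Faltings--Serre set, packaged so that the constant depends only on $g$, $K$, $N$ — precisely the kind of bookkeeping that Lemma \ref{irreducibility after some point} already performs, so the work is to extract from its proof the slightly finer conclusion that the trace field is the \emph{full} residue field rather than merely a subfield over which $\SL_2$ is defined. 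A secondary (but routine, via Nakayama as in the technique section) point is upgrading "image contains $\SL_2(\F_q)$ with $q$ maximal" through the adjoint/symmetric-power representations if one instead wanted the full $M_2(\o_{E,\pfrak})$-span statement; for the residual $\SL_2$-containment asserted here, the trace-field computation is the whole game.
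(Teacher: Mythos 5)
There is a genuine gap, and it sits exactly at the step you call "the whole game." From Lemma \ref{irreducibility after some point} you only know
$\SL_2(\F_q)\subseteq \rhobar_{A,\pfrak}(\Gal(\Qbar/K))\subseteq (\o_E/\pfrak)^\times\cdot \GL_2(\F_q)$,
so an element of the image has the form $\lambda\cdot m$ with $\lambda\in(\o_E/\pfrak)^\times$ and $m\in\GL_2(\F_q)$, and its trace is $\lambda\cdot\tr(m)$, which need not lie in $\F_q$. Hence "the Frobenius traces generate $\o_E/\pfrak$" does \emph{not} imply $\F_q=\o_E/\pfrak$. Even after arranging $\det\rhobar_{A,\pfrak}=\chi_p$ on the nose, the constraint is only $\lambda^2\det(m)\in\F_p^\times$, so $\lambda^2\in\F_q^\times$, i.e.\ $\lambda\in\F_{q^2}^\times$; the traces then lie in $\F_{q^2}\cap(\o_E/\pfrak)$, and trace-fullness only yields $[\o_E/\pfrak:\F_q]\leq 2$. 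The residual index-$2$ possibility is precisely the inner-twist scenario (image inside $\F_{q^2}^\times\cdot\GL_2(\F_q)$ with scalars genuinely outside $\F_q$), in which the traces can generate all of $\o_E/\pfrak$ while the $\SL_2$-part is defined over the proper subfield $\F_q$; your Faltings--Serre/bounded-index-order argument (which is essentially the proof of Proposition \ref{large prime lemma} and is indeed an ingredient) cannot see this distinction, since it is an algebra statement and the $\F_p$-algebra spanned by $(\o_E/\pfrak)^\times\cdot\GL_2(\F_q)$ is already all of $M_2(\o_E/\pfrak)$.

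The paper's proof supplies exactly the missing step. For $\sigma\in\Gal((\o_E/\pfrak)/\F_q)$ one has $\sigma(\lambda_g)=\pm\lambda_g$, so $g\mapsto\sigma(\rhobar_{A,\pfrak}(g))\cdot\rhobar_{A,\pfrak}(g)^{-1}$ defines a quadratic character $\eps:\Gal(\Qbar/K)\to\{\pm1\}$ (an inner twist). One then shows $\eps$ is unramified outside $S$: away from $p$ this is automatic, and at $\qfrak\mid(p)$ one uses Raynaud's description of the inertial restrictions of finite flat group schemes (fundamental characters occur with multiplicity at most one) to rule out $\eps\vert_{I_\qfrak}\equiv\chi_p^{(p-1)/2}$. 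Hence $\eps$ dies on $\Gal(\Qbar/L)$, where $L$ is the explicit compositum of all quadratic extensions of $K$ unramified outside $S$; over $L$ the image lies in $\GL_2(\F_q)$ with no scalar ambiguity, so $\F_p[\rhobar_{A,\pfrak}(\Gal(\Qbar/L))]\subseteq M_2(\F_q)$, and comparing with the full-algebra statement $\Z_p[\rho_{A,\pfrak}(\Gal(\Qbar/L))]=M_2(\o_{E,\pfrak})$ from Proposition \ref{large prime lemma} (applied over $L$) forces $\F_q=\o_E/\pfrak$, whence $\SL_2(\o_E/\pfrak)\subseteq\rhobar_{A,\pfrak}(\Gal(\Qbar/K))$. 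So your proposal would need to be supplemented by this quadratic-character/ramification-at-$p$ analysis and the passage to the explicit extension $L$; without it the claimed reduction to a trace-field computation is not valid.
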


For completeness let us now sketch a proof of Lemma \ref{irreducibility after some point} (by sketching the proof of Lemma $3.9$ of \cite{my-first-effective-mordell-paper}).

\begin{proof}[Sketch of proof of Lemma \ref{irreducibility after some point}.]
By the Dickson classification of subgroups of $\PSL_2(\o_E/\pfrak)$ (see e.g.\ Theorem $4.15$ of \cite{flannery-o'brien}) and the fact that $\SL_2(\F_q)$ is its own commutator subgroup, to show that $\SL_2(\F_q)\subseteq g\cdot \rhobar_{A,\pfrak}(\Gal(\Qbar/K))\cdot g^{-1}\subseteq (\o_E/\pfrak)^\times\cdot \GL_2(\F_q)$ for some $g\in \GL_2(\o_E/\pfrak)$ and some subfield $\F_q\subseteq \o_E/\pfrak$ it suffices to show that $\rhobar_{A,\pfrak}$ is absolutely irreducible, has projective image of size $\gg 1$, and is absolutely not induced. To see that the projective image is of size $\gg 1$ one restricts to inertia subgroups at $\qfrak\vert (p)$ and uses Raynaud's classification of the inertial restrictions of Galois representations corresponding to finite flat group schemes which prolong over $\o_{K,\qfrak}$ (thus $(\rhobar_{A,\pfrak}\otimes_{\o_E/\pfrak} \Fbar_p)\vert_{I_\qfrak}$ semisimplifies to a sum of two multiplicity-free products of fundamental characters, and since $\det{(\rhobar_{A,\pfrak}\vert_{I_\qfrak})}$ is cyclotomic it follows that the projectivization of $\rhobar_{A,\pfrak}$ must have large image).

Showing that $\rhobar_{A,\pfrak}$ is absolutely not induced reduces to showing that $\rhobar_{A,\pfrak}\vert_{\Gal(\Qbar/L)}$ is absolutely irreducible for $L/K$ an explicit finite extension, so let us focus on absolute irreducibility. Let us just show irreducibility for notation's sake.

Suppose $\rhobar_{A,\pfrak}$ is reducible. Thus it semisimplifies to a sum of characters, say $\tr{\rho_{A,\pfrak}}\equiv \chibar + \chibar'\pmod*{\pfrak}$. Since $\chibar$ and $\chibar'$ are subquotients of the representation $\rhobar_{A,\pfrak}$, and the latter corresponds to a finite flat group scheme which prolongs over $\o_{K,S}$, it follows that the inertial restrictions $\chibar\vert_{I_\qfrak}$ and $\chibar'\vert_{I_\qfrak}$ at primes $\qfrak\vert (p)$ are reductions modulo $\pfrak$ of characters corresponding to CM $p$-divisible groups.

Because (via Artin reciprocity) $\chibar$ and $\chibar'$ must be trivial on an explicit finite-index subgroup of global units, in fact said CM $p$-divisible groups must be $p$-divisible groups associated to algebraic Hecke characters --- indeed, abusing notation by leaving the Artin map implicit, $\chibar(\eps)\equiv 1\pmod*{\pfrak}$ for all totally positive $\eps\in \o_K^\times$ with $\eps\equiv 1\pmod*{(N)^{(10^{10}\cdot d\cdot [K:\Q])!}}$ means (from the fact that $\chibar$ lifts to a CM $p$-divisible group at each inertia group) that a particular product of embeddings of $\eps$ into a Galois closure $L$ of $K$ is $1$ modulo a prime above $\pfrak$. But, considering this congruence only on generators of this finite-index subgroup of $\o_K^\times$, such a product has explicitly bounded height, and $p$ is huge, so we must have equality, not just congruence --- at least on the generators. Multiplicativity implies that said product of embeddings is trivial on a finite-index subgroup of $\o_K^\times$, and so, by the classification of algebraic Hecke characters, it follows that there was in fact an algebraic Hecke character $\psi$ such that $\chibar\vert_{I_\qfrak}\equiv \psi\vert_{I_\qfrak}\pmod*{\pfrak}$. Similarly for $\chibar'$.

Twisting by $\psi^{-1}$ leaves a finite-order character with conductor dividing an explicit constant depending on $N$, so that we conclude that there are algebraic Hecke characters $\chi$ and $\chi'$ of explicitly bounded conductor such that $\chi\equiv \chibar\pmod*{\pfrak}$ and $\chi'\equiv \chibar'\pmod*{\pfrak}$.

Now consider the congruence $\tr{\rho_{A,\pfrak}}\equiv \chi + \chi'\pmod*{\pfrak}$. Choose an explicit $\ell$ prime to $N$ and a prime $\lambda\vert (\ell)$ of $\o_E$ above $\ell$ and evaluate said congruence on an explicit Faltings-Serre set of primes with respect to $\lambda$. Both sides are algebraic integers of explicitly bounded height, so congruence modulo the huge $\pfrak$ implies equality --- so in fact $\tr{\rho_{A,\lambda}} = \chi + \chi'$ on the Faltings-Serre set (note that we have used compatibility of Frobenius traces at $\pfrak$ and at $\lambda$), whence $\rho_{A,\lambda}$ is reducible, a contradiction.

So indeed $\rhobar_{A,\pfrak}$ is irreducible for $p$ explicitly sufficiently large, completing our sketch.
\end{proof}

\subsection{Faltings' Lemma.}

The second is Lemma $3.1$ of \cite{my-first-effective-mordell-paper}.

\begin{lem}[Faltings --- see Lemma $3.1$ of \cite{my-first-effective-mordell-paper}.]\label{faltings' lemma}
Let $d\in \Z^+$. Let $N\in \Z^+$. Let $X\in \Z^+$. Let $K/\Q$ be a number field. Then: there is an explicit (thus effectively computable) constant $C_{d,K,N,X}\in \Z^+$ such that the following holds.
\begin{itemize}
\item {Let $\o$ be an order in the ring of integers of a number field. Let $\lambda\vert (\ell)$ be a prime of $\o$ such that $\#|\o/\lambda|\leq X$. Let $\rho, \rho': \Gal(\Qbar/K)\to \GL_d(\o_\lambda)$ be unramified outside primes dividing $N\ell$ and such that $\tr(\rho(\Frob_\pfrak)) = \tr(\rho'(\Frob_\pfrak))$ for all primes $\pfrak\subseteq \o_K$ of $\o_K$ with $\pfrak\nmid (N\ell)$ and $\Nm\,{\pfrak}\leq C_{d,K,N,X}$.

Then: $\tr\circ \rho = \tr\circ \rho'$ on $\Gal(\Qbar/K)$, and $$\Z_p[\rho(\Gal(\Qbar/K))] = \Z_p[\{\rho(\Frob_\pfrak) : \pfrak\nmid (N\ell), \Nm\,{\pfrak}\leq C_{d,K,N,X}\}].$$}
\end{itemize}
\end{lem}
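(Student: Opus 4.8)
The assertion is the effective Faltings--Serre method, and the plan is as follows. First, since $\o/\lambda$ is a finite field of characteristic $\ell$ with $\#|\o/\lambda|\le X$, one has $\ell\le X$; thus $\Z_p=\Z_\ell$ is determined by $X$ alone, the primes dividing $N\ell$ form an explicit finite set, $\o_\lambda$ is module-finite over $\Z_p$, and the residue field $\F_q:=\o/\lambda$ has $q\le X$. The reductions $\bar\rho,\bar\rho':\Gal(\Qbar/K)\to\GL_d(\F_q)$ have finite image, so their common fixed field $L/K$ satisfies $[L:K]\le(\#\GL_d(\F_q))^2\le X^{2d^2}$ and is unramified outside the primes dividing $N\ell$. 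If $C$ is large enough that $\{\Frob_\pfrak:\pfrak\nmid(N\ell),\ \Nm\pfrak\le C\}$ meets every conjugacy class of $\Gal(L/K)$, then, the trace being a class function, the hypothesis forces $\tr\bar\rho=\tr\bar\rho'$.

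I would then run the standard ascending induction. Suppose $\tr\circ\rho\equiv\tr\circ\rho'\pmod{\lambda^n}$. The residual function $g\mapsto\lambda^{-n}\big(\tr\rho(g)-\tr\rho'(g)\big)\bmod\lambda$ is governed by finitely many classes in $\F_q$-vector spaces whose dimension is bounded purely in terms of $d$ and $q\le X$ (cut out by group cohomology of the residual image with adjoint-type $\F_q$-coefficients); adjoining to $L$ the finitely many abelian extensions that trivialise these classes gives a finite extension $L'/K$ whose degree and ramification are bounded in terms of $d$ and $X$ only (there can be wild ramification above $\ell$, but it is bounded since $e_{L'/K}\le[L':K]$ and $\ell\le X$) and which is still unramified outside the primes dividing $N\ell$. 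Hence the vanishing of the obstruction at every level is detected by the Frobenii of norm $\le C$, once $C$ dominates a Chebotarev bound for $L'/K$; then $\tr\circ\rho\equiv\tr\circ\rho'\pmod{\lambda^{n+1}}$, and letting $n\to\infty$ gives $\tr\circ\rho=\tr\circ\rho'$.

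To make $C$ explicit: the extension $L'/\Q$ has degree $\le[K:\Q]\cdot X^{O_d(1)}$ and is ramified only at primes dividing $\disc(K)\cdot N\cdot\ell$ with $\ell\le X$ and with bounded ramification, so Hermite--Minkowski bounds $|\disc L'|$ effectively in terms of $d,K,N,X$, and an unconditional effective Chebotarev bound (e.g.\ Lagarias--Odlyzko) then yields an effectively computable $C=C_{d,K,N,X}$ with the required property; a subset of a finite group meeting every conjugacy class generates it (Jordan), so these Frobenii also generate $\Gal(L'/K)$. The algebra identity follows along the same lines: writing $R=\Z_p[\rho(\Gal(\Qbar/K))]$ and $R'=\Z_p[\{\rho(\Frob_\pfrak):\pfrak\nmid(N\ell),\ \Nm\pfrak\le C\}]\subseteq R$, Nakayama over $\Z_p$ reduces $R'=R$ to the statement that the images of these Frobenii generate the finite $\F_p$-algebra $R/pR$, which is generated by $\rho(\Gal(\Qbar/K))\bmod p$; since the latter factors through the fixed field of $\rho\bmod p$ — an extension unramified outside $N\ell$ of controlled degree — one enlarges $L'$ to contain it (keeping all bounds) and invokes the same Chebotarev input, using that a generating subset of the Galois group yields a generating subset of $R/pR$.

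The main obstacle is the effective form of the inductive step: one must pin down exactly which finitely many $\F_q$-cohomology classes can obstruct lifting the trace identity from $\lambda^n$ to $\lambda^{n+1}$, check that a \emph{single} bounded set of Frobenii simultaneously detects all of them, and exhibit the extension $L'/K$ concretely with degree and ramification depending only on $d$ and $q\le X$; the remaining ingredients (finiteness of number fields of bounded degree and discriminant, effective prime-ideal Chebotarev) are classical and genuinely effective. I note that the one potentially worrying point — that $\o_\lambda$ may sit inside a number field of large degree — is inconsequential for the residual analysis, since every dimension count there involves only $d$ and $q=\#|\o/\lambda|\le X$, never the degree of $\Frac(\o_\lambda)$ over $\Q_p$.
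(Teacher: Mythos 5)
Your overall scaffolding (effective Chebotarev over an auxiliary extension unramified outside $N\ell$, plus Nakayama) is the right spirit, but the heart of your argument --- the inductive step --- is asserted rather than proved, and as stated it does not work. You claim the level-$n$ deviation $g\mapsto\lambda^{-n}\bigl(\tr\rho(g)-\tr\rho'(g)\bigr)\bmod\lambda$ is "cut out by group cohomology of the residual image with adjoint-type $\F_q$-coefficients" and hence factors through a single extension $L'/K$ of degree bounded independently of $n$. That cohomological description presupposes that congruence of \emph{traces} mod $\lambda^n$ forces congruence of the \emph{representations} (up to conjugation) mod $\lambda^n$ --- a Carayol/Serre-type statement which requires residual absolute irreducibility, a hypothesis this lemma does not (and cannot) carry, since in this paper it is applied precisely in arguments whose point is to rule out reducibility. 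Without it, traces only see semisimplifications and no adjoint cocycle is attached to the deviation; and even granting such a description, you never exhibit the uniform-in-$n$ bounded quotient: a priori the level-$n$ deviation only factors through the mod-$\lambda^{n+1}$ quotient, whose degree grows with $n$. The proof behind the cited Lemma $3.1$ (Faltings' argument) avoids any induction on $n$: one takes the $\o_\lambda$-subalgebra $M\subseteq M_d(\o_\lambda)^{\oplus 2}$ generated by the image of $\rho\oplus\rho'$; it is an $\o_\lambda$-module of rank $\leq 2d^2$, so $\#|M/\lambda M|\leq X^{2d^2}$, and the homomorphism $\Gal(\Qbar/K)\to (M/\lambda M)^\times$ cuts out \emph{one} extension $L/K$ of degree $\leq X^{2d^2}$ unramified outside $N\ell$; if the conjugation-invariant $\o_\lambda$-linear functional $(a,b)\mapsto \tr{a}-\tr{b}$ were nonzero on $M$, take $n$ maximal with values in $\lambda^n$, get a nonzero functional on $M/\lambda M$, and effective Chebotarev over $L$ produces a prime $\pfrak$ with $\Nm\,\pfrak\leq C$ at which the hypothesized \emph{exact} equality of traces is violated. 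This "maximal $n$ plus one bounded quotient" mechanism is exactly what your sketch is missing.

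Your treatment of the algebra statement has a second, independent gap, and it is precisely where your closing reassurance fails: you reduce mod $p$ and invoke "the fixed field of $\rho\bmod p$ --- an extension unramified outside $N\ell$ of controlled degree", but $R/pR$ has order $p^{\rank_{\Z_p}R}$ with $\rank_{\Z_p}R$ up to $d^2\cdot[\o_\lambda:\Z_p]$, and $[\o_\lambda:\Z_p]$ is \emph{not} bounded in terms of $d,K,N,X$: only the residue field $\o/\lambda$ is bounded by $X$, while the ramification of $\lambda$ and the degree of the ambient number field are unconstrained. So the quotient of $\Gal(\Qbar/K)$ you would need Chebotarev for has unbounded order, and the constant you extract would depend on data the lemma forbids. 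The intended argument (as the paper's parenthetical indicates) again runs Nakayama through the rank-$\leq 2d^2$ module $M$ modulo $\lambda$, i.e.\ through the residue field of $\lambda$, not through $\o_\lambda/p$. (Minor bookkeeping: your bound "$[L':K]$ bounded in terms of $d$ and $X$ only" should also involve $K$, $N$, $\ell$ via ray class groups --- harmless, since the constant may depend on them; and note the paper itself does not reprove this lemma but quotes Lemma $3.1$ of \cite{my-first-effective-mordell-paper}, adding only the Nakayama remark for the second statement.)
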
\noindent
(The second statement follows from Nakayama exactly as in the proof of Lemma $3.1$ of \cite{my-first-effective-mordell-paper}.)

\subsection{An observation of Silverberg.}

The third is Theorem $3.5$ of \cite{my-first-effective-mordell-paper}.

\begin{lem}[Silverberg, Grothendieck --- see Theorem $3.5$ of \cite{my-first-effective-mordell-paper}.]\label{everything happens over an explicit finite extension}
Let $g\in \Z^+$. Let $K/\Q$ be a number field. Let $S$ be a finite set of places of $K$. Then: there is an explicit finite Galois extension $K'/K$ depending only on $g$, $K$, and $S$ such that the following holds.
\begin{itemize}
\item Let $A/K$ be a $g$-dimensional abelian variety over $K$ with good reduction outside $S$. Then: $A/K'$ is split semistable and $\End_{K'}(A) = \End_{\Qbar}(A)$.

Consequently its $K'$-isogeny decomposition $A\sim_{K'} \prod_i B_i^{\times n_i}$ into $K'$-simple pairwise non-$K'$-isogenous $B_i/K'$ is such that all $B_i/\Qbar$ are $\Qbar$-simple and pairwise non-$\Qbar$-isogenous.
\end{itemize}
\end{lem}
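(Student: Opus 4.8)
The plan is to combine two classical facts --- a theorem of Silverberg pinning down the field of definition of homomorphisms between abelian varieties, and Grothendieck's semistable reduction theorem --- and then to invoke Hermite--Minkowski to replace the (a priori $A$-dependent) extension these produce by a single explicit extension depending only on $g$, $K$, and $S$. The observation that makes this go through is that both inputs already take effect over $K(A[m])$ for one fixed $m\geq 3$, and $K(A[m])/K$ has degree and ramification bounded in terms of $g$, $K$, and $S$ alone.

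First I would fix $m$ to be, say, the least prime $\geq 3$ that does not lie below any place of $S$ (so $m$ depends only on $S$). For any $g$-dimensional $A/K$ with good reduction outside $S$, the extension $K(A[m])/K$ obtained by adjoining the coordinates of the $m$-torsion is finite Galois of degree $\leq\#|\GL_{2g}(\Z/m\Z)|$, and by the N\'{e}ron--Ogg--Shafarevich criterion it is unramified outside $S$ (the $\Gal$-module $A[m]$ is unramified at every place of good reduction not above $m$, and no place of $S$ lies above $m$ by our choice of $m$). I would then let $K'$ be the Galois closure over $K$ of the compositum of \emph{all} number fields $L$ with $K\subseteq L$, $[L:K]\leq\#|\GL_{2g}(\Z/m\Z)|$, and $L/K$ unramified outside $S$; by Hermite--Minkowski this compositum is finite, so $K'/K$ is a finite Galois extension --- manifestly explicit, and depending only on $g$, $K$, and $S$ --- and $K(A[m])\subseteq K'$ for every relevant $A$.

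It remains to verify the two assertions over $K'$ and to deduce the final clause. For semistability: away from $S$ the variety $A$ already has good reduction, and at a place $v\in S$ Grothendieck's semistable reduction theorem --- together with the criterion that triviality of the $I_v$-action on $A[m]$ for $m\geq 3$ prime to the residue characteristic forces the $I_v$-action on an $\ell$-adic Tate module ($\ell$ prime to the residue characteristic) to be unipotent --- shows that $A/K'$ is semistable at $v$, since $A[m]$ is $K'$-rational. To moreover obtain \emph{split} semistability one enlarges $K'$ by a further finite Galois extension, still depending only on $g$, $K$, and $S$: the toric part of each of the finitely many bad fibres is a torus over the residue field, which becomes split over an extension of degree at most the maximal order of a finite subgroup of $\GL_g(\Z)$ (Minkowski), and Hermite--Minkowski again absorbs the corresponding bounded-degree, unramified-outside-$S$ extension into $K'$. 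For the endomorphisms, Silverberg's theorem on fields of definition of homomorphisms of abelian varieties gives $\End_{K(A[m])}(A)=\End_{\Qbar}(A)$ for $m\geq 3$, hence $\End_{K'}(A)=\End_{\Qbar}(A)$, the reverse inclusion being trivial. Finally, writing the $K'$-isogeny decomposition $A\sim_{K'}\prod_i B_i^{\times n_i}$ into $K'$-simple pairwise non-$K'$-isogenous $B_i/K'$, the identity $\End^0_{\Qbar}(A)=\End^0_{K'}(A)=\prod_i M_{n_i}(\End^0_{K'}(B_i))$ forces --- upon comparing with the $\Qbar$-isogeny decomposition of $A$ and counting $\Q$-dimensions --- that $\End^0_{K'}(B_i)=\End^0_{\Qbar}(B_i)$ and $\Hom_{\Qbar}(B_i,B_j)=\Hom_{K'}(B_i,B_j)$ for all $i,j$. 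Hence each $B_i$ is $\Qbar$-simple (a nontrivial idempotent of $\End^0_{\Qbar}(B_i)=\End^0_{K'}(B_i)$ would split $B_i$ over $K'$), and the $B_i$ are pairwise non-$\Qbar$-isogenous (a $\Qbar$-isogeny $B_i\to B_j$ with $i\neq j$ would be a nonzero element of $\Hom_{\Qbar}(B_i,B_j)=\Hom_{K'}(B_i,B_j)=0$).

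The step I expect to require the most care is the \emph{split} half of ``split semistable'': the remainder is a direct citation of Silverberg and of Grothendieck, plus the (soft) collapse to a uniform extension via Hermite--Minkowski, whereas pinning down the toric parts of the bad fibres uniformly in $A$ takes the small amount of extra bookkeeping indicated above. Indeed the only genuine idea in the whole argument is that $K(A[m])$ has degree $\leq\#|\GL_{2g}(\Z/m\Z)|$ and is unramified outside $S$ --- both bounds being independent of $A$ --- so that Hermite--Minkowski collapses its infinitely many possible values into a single explicit field.
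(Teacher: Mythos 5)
Your proposal follows the same route the paper intends (it gives no proof of its own, citing Silverberg for the endomorphisms and Grothendieck--Raynaud for semistability, with Hermite--Minkowski collapsing $K(A[m])$ to a uniform field), but one stated step is wrong as written: $K(A[m])/K$ is \emph{not} unramified outside $S$. Even at a place $v\mid m$ of good reduction the extension is typically ramified --- by the Weil pairing $\zeta_m\in K(A[m])$, so the mod-$m$ cyclotomic character already forces ramification above $m$ --- and choosing $m$ to avoid $S$ does not help; N\'{e}ron--Ogg--Shafarevich only gives unramifiedness at places of good reduction \emph{prime to} $m$. The fix is immediate and does not disturb the structure of your argument: since $m$ depends only on $S$ (or just take $m=3$ or $12$ once and for all), the set $S\cup\{v: v\mid m\}$ is a fixed finite set depending only on $K$ and $S$, and you should run Hermite--Minkowski with extensions of degree $\leq \#|\GL_{2g}(\Z/m\Z)|$ unramified outside \emph{that} set.

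Two smaller remarks. First, your extra bookkeeping for the ``split'' half is unnecessary: over $K(A[m])$ one has $\zeta_m$ rational and the $m$-torsion of the toric part rational, so the Galois action on the character group of the toric part of each bad fibre is trivial modulo $m$; since the kernel of $\GL_t(\Z)\to\GL_t(\Z/m)$ is torsion-free for $m\geq 3$ and inertia acts on the character group through a finite quotient, the action is trivial and the torus is already split over $K'$. (If you do keep your auxiliary splitting extension, its ``unramified outside $S$'' claim would need the same kind of justification as above; the clean statement is just that it is an explicit extension depending only on $g$, $K$, $S$.) Second, your deduction of the final clause (comparing $\End^0_{K'}$ with $\End^0_{\Qbar}$ to get $\Qbar$-simplicity and pairwise non-$\Qbar$-isogeny of the $B_i$) is correct and is exactly what the statement of the lemma intends by ``consequently''.
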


\subsection{The isogeny factorization of a $\GL_2$-type abelian variety.}

The fourth is Lemma $3.7$ of \cite{my-first-effective-mordell-paper}.

\begin{lem}[Lemma $3.7$ of \cite{my-first-effective-mordell-paper}.]\label{GL2-type abelian varieties are isotypic}
Let $K/\Q$ be a number field. Let $A/K$ be an abelian variety which is of $\GL_2$-type over $K$. Then: either there is a $K$-simple abelian variety $B/K$ of $\GL_2$-type over $K$ such that $A\sim_K B^{\times \frac{\dim{A}}{\dim{B}}}$, or else there are two $K$-simple abelian varieties $B_1, B_2/K$ which are non-$K$-isogenous and admit sufficiently many complex multiplications over $K$ such that $A\sim_K B_1^{\times \frac{\dim{A}}{2\dim{B_1}}}\times B_2^{\times \frac{\dim{A}}{2\dim{B_2}}}$.
\end{lem}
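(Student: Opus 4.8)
Apply Poincaré reducibility over $K$: write $A\sim_K \prod_{i=1}^r B_i^{\times n_i}$ with the $B_i/K$ $K$-simple and pairwise non-$K$-isogenous, so that $\End_K^0(A)\cong \prod_{i=1}^r M_{n_i}(D_i)$ where $D_i:=\End_K^0(B_i)$ is a division algebra; let $L_i$ be its centre, $e_i:=[L_i:\Q]$, and $\dim_{L_i} D_i=:d_i^2$. Let $E\subseteq \End_K^0(A)$ be a number field of degree $[E:\Q]=\dim A$ witnessing that $A$ is of $\GL_2$-type over $K$. I would run the argument purely through the numerical constraints imposed on the invariants $(n_i,e_i,d_i)$ by (i) the action of $E$ on $H_1(A,\Q)$ and (ii) the structure of $\End_K^0(B_i)$ as a division algebra.

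\textbf{Step 1: there are at most two factors.} Let $V:=H_1(A,\Q)$, so $\dim_\Q V=2\dim A=2[E:\Q]$ and $V$ is a module over $\End_K^0(A)$, hence over $E$. The central idempotents $e_1,\dots,e_r$ of $\End_K^0(A)$ lie in its centre, so they commute with $E$; therefore $E$ preserves each $V_i:=e_iV=H_1(B_i^{\times n_i},\Q)$, and $V=\bigoplus_i V_i$ as $E$-modules. Since $E$ is a field, each $V_i$ is a free $E$-module of some rank $r_i\geq 1$, and $\sum_i r_i=\dim_E V=2$. Hence $r\leq 2$, and if $r=2$ then $r_1=r_2=1$ while if $r=1$ then $r_1=2$. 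Note that $r_i=\dim_E V_i=\dim_\Q V_i/[E:\Q]=2n_i\dim B_i/\dim A$.

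\textbf{Step 2: identify the two alternatives.} For each $i$ the projection $E\to M_{n_i}(D_i)$ sends $1$ to $1\neq 0$, hence is injective, so $E$ is a subfield of the central simple $L_i$-algebra $M_{n_i}(D_i)$ (of degree $(n_id_i)^2$ over $L_i$); since a maximal subfield of such an algebra has degree $n_id_i$ over $L_i$, we get $[E:\Q]\leq n_id_ie_i$. On the other hand $H_1(B_i,\Q)$ is a module over the division algebra $D_i$, so $\dim_\Q D_i=e_id_i^2$ divides $2\dim B_i$; write $2\dim B_i=k_ie_id_i^2$ with $k_i\geq 1$. If $r=1$ then $r_1=2$ gives $\dim A=n_1\dim B_1$, i.e.\ $n_1=\dim A/\dim B_1$, and the displayed inequalities yield $n_1\dim B_1=n_1k_1e_1d_1^2/2\leq n_1d_1e_1$, hence $k_1d_1\leq 2$. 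Running through $(k_1,d_1)\in\{(2,1),(1,2),(1,1)\}$: in each case $D_1$ contains a subfield of degree exactly $\dim B_1$ over $\Q$ (it is $L_1$ itself when $d_1=1$, a maximal subfield when $d_1=2$; in the case $(1,1)$ one uses Albert's classification, which forces $L_1=D_1$ to be a CM field of degree $2\dim B_1$, whose maximal totally real subfield has degree $\dim B_1$). Thus $B_1$ is $K$-simple of $\GL_2$-type over $K$ and $A\sim_K B_1^{\times \dim A/\dim B_1}$: the first alternative. If instead $r=2$, then $r_i=1$ gives $\dim A=2n_i\dim B_i$ for $i=1,2$, i.e.\ $n_i=\dim A/(2\dim B_i)$, and now $2n_i\dim B_i=n_ik_ie_id_i^2\leq n_id_ie_i$ forces $k_id_i\leq 1$, so $k_i=d_i=1$ and $D_i=L_i$ is a field of degree $2\dim B_i$ over $\Q$; by Albert's classification this field is CM, so $B_i$ admits sufficiently many complex multiplications over $K$. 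As $B_1,B_2$ were chosen non-$K$-isogenous, this is the second alternative.

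\textbf{Main obstacle.} There is no serious obstacle: the proof is a direct combination of Poincaré reducibility with two standard numerical facts (a maximal subfield of a central simple algebra of degree $m^2$ over $L$ has degree $m$ over $L$; and $\dim_\Q\End_K^0(B_i)$ divides $2\dim B_i$ since $H_1$ is a module over the division algebra). The only points requiring care are the bookkeeping with the invariants $n_i,e_i,d_i,k_i$ and the appeal to Albert's classification of endomorphism algebras to conclude that in the CM sub-cases the relevant endomorphism field is genuinely a CM field.
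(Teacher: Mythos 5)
Your argument is correct, and since the paper only cites this statement (Lemma $3.7$ of \cite{my-first-effective-mordell-paper}) without reproducing a proof, there is nothing in the present text to compare it against; what you give is the standard route (Poincar\'{e} reducibility, the $E$-action on $H_1$ forcing at most two isotypic factors, then Albert bookkeeping), which is surely also how the cited source proceeds. One small point to tighten: the inequality $[E:\Q]\leq n_i d_i e_i$ is not literally justified by "a maximal subfield has degree $n_id_i$ over $L_i$", because your $E$ need not contain the centre $L_i$; the correct statement is that any commutative \'{e}tale $L_i$-subalgebra of a central simple $L_i$-algebra of degree $(n_id_i)^2$ has $L_i$-dimension at most $n_id_i$, applied to $E\cdot L_i$ (which is \'{e}tale since it is a quotient of $E\otimes_\Q L_i$ and contains $L_i$), giving $[E:\Q]\leq [EL_i:\Q]\leq n_id_ie_i$. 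With that patch the numerics $k_1d_1\leq 2$ (isotypic case) and $k_id_i\leq 1$ (two-factor case) go through exactly as you wrote, and the case analysis correctly yields a $\GL_2$-type $K$-simple factor in the first alternative and two non-$K$-isogenous CM factors in the second.
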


\section{A priori control of the endomorphism algebra.\label{a priori control on the endomorphism algebra subsection}}

\begin{prop}\label{the endomorphism algebra is one of an explicit finite set of possibilities}
Let $g\in \Z^+$. Let $K/\Q$ be a number field. Let $S$ be a finite set of places of $K$. Then: there is an explicit finite set of (isomorphism classes of) $\Q$-algebras $\mathcal{R}_{g,K,S}^0$ depending only on $g$, $K$, and $S$ such that the following holds.
\begin{itemize}
\item Let $A/K$ be a $g$-dimensional abelian variety of $\GL_2$-type over $K$ which has good reduction outside $S$. Then: $\End_K^0(A)\in \mathcal{R}_{g,K,S}^0$.
\end{itemize}
\end{prop}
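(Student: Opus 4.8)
The plan is to reduce, via the structural results already cited, to controlling two pieces of arithmetic data: a CM field $E$ (which will be the centre of the endomorphism algebra of a $K$-simple isotypic factor) and, in the quaternionic case, the discriminant of a quaternion algebra $D/E$. First I would replace $K$ by the explicit finite Galois extension $K'/K$ of Lemma~\ref{everything happens over an explicit finite extension}, so that $A/K'$ is split semistable with $\End_{K'}(A) = \End_{\Qbar}(A)$; since $[K':K]$ and the primes of bad reduction of $A/K'$ are bounded in terms of $g,K,S$ only, and since $\End_K^0(A)\subseteq \End_{K'}^0(A)$ with index-type data controlled by $[K':K]$, it suffices to produce an explicit finite list for $\End_{K'}^0(A)$. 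Then Lemma~\ref{GL2-type abelian varieties are isotypic} (applied over $K'$) says $A$ is $K'$-isogenous either to $B^{\times m}$ with $B/K'$ $K'$-simple of $\GL_2$-type, or to a product $B_1^{\times m_1}\times B_2^{\times m_2}$ of two CM $K'$-simple factors. In every case $\End_{K'}^0(A)$ is a matrix algebra $M_r(D_i)$ over $\End_{K'}^0(B_i)$, so it is enough to pin down each $\End_{K'}^0(B_i)$ from an explicit finite set; and since $B_i$ is a $\Qbar$-simple abelian variety of $\GL_2$-type with good reduction outside a controlled set, $\End^0(B_i)$ is either a CM field $E$ with $[E:\Q]\in\{g, g/2, \dots\}$ dividing appropriately, or a quaternion algebra $D$ over such a CM field.

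Next I would bound the centre $E$. The key tool is Faltings' Lemma~\ref{faltings' lemma} together with the trick from the Technique section: after a further explicit extension we may assume $\det\rho_{B,p}$ is cyclotomic for all large $p$, and then for a single well-chosen auxiliary prime $\ell$ (prime to the conductor, with residue field of bounded size) and the associated Faltings--Serre set $T$ of primes of $K'$ of bounded norm, the traces $\tr(\rho_{B,\lambda}(\Frob_\qfrak))$ for $\qfrak\in T$ generate $E$ over $\Q$. These traces are algebraic integers of explicitly bounded height (bounded norm of $\qfrak$ via Weil bounds, bounded degree), so $E = \Q(\{\tr(\rho_{B,\lambda}(\Frob_\qfrak))\}_{\qfrak\in T})$ is generated by finitely many algebraic numbers of bounded height and degree, hence $|\disc E|$ is explicitly bounded; by Hermite--Minkowski, $E$ lies in an explicit finite set. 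For the quaternionic case I would then bound $\disc D$: if $D$ were ramified at a prime $\Pfrak$ of $E$ above a large rational prime $p$, then the local representation $\rho_{B,\Pfrak}$ would be forced to be reducible (the local division algebra forces non-absolutely-irreducible residual image), contradicting Lemma~\ref{irreducibility after some point}, which guarantees that for $p$ past an explicit threshold the residual image contains a conjugate of $\SL_2(\F_p)$. Hence $\disc D$ is supported on an explicit finite set of primes of $E$, and since $D$ is determined by its (even-cardinality) ramification set, $D$ itself lies in an explicit finite set. The CM case is handled similarly — one bounds each CM field $\End^0(B_i)$ by the same Faltings--Serre/bounded-height argument, using that CM abelian varieties of dimension $\le g$ have endomorphism fields of degree $\le 2g$.

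Finally I would assemble: given the finite list of possibilities for each $E$ (and each $D/E$ in the quaternionic case), and the bounded multiplicities $r$ (bounded because $\dim B_i\ge 1$ forces $m_i\le 2g$), the algebra $\End_{K'}^0(A) \cong \prod_i M_{r_i}(D_i)$ ranges over an explicit finite set $\mathcal{R}^0$; undoing the passage from $K$ to $K'$ changes this into another explicit finite set (one takes all $\Q$-subalgebras of the members of $\mathcal{R}^0$ that can arise as $\End_K^0$ of a form — a finite operation). The main obstacle is the step bounding the centre $E$: one must check carefully that the Faltings--Serre set $T$ can be chosen of explicitly bounded norm uniformly in $B$ (this is exactly what Lemma~\ref{faltings' lemma} provides once the auxiliary prime $\ell$ and its residue-field size $X$ are fixed in terms of $g,K,S$), and that the cyclotomic-determinant normalization really can be achieved after an extension depending only on $g,K,S$ — i.e.\ that the obstruction to $\det\rho_{B,p}=\chi_p$ is a character of bounded order and conductor, killed by an explicit Hermite--Minkowski extension. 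Granting those normalizations, the height bound on Frobenius traces and Hermite--Minkowski do the rest; the quaternionic discriminant bound is then a comparatively soft local argument riding on Lemma~\ref{irreducibility after some point}.
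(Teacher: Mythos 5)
Your core argument --- generating the centre by Frobenius traces at a Faltings--Serre set, bounding those traces by the Weil bounds so that Hermite--Minkowski puts the centre in an explicit finite list, and then bounding the discriminant of the quaternion algebra by observing that ramification at a prime above a large $p$ forces the residual representation into a Cartan/Borel, contradicting Lemma \ref{irreducibility after some point} --- is exactly the paper's argument (the paper additionally invokes Faltings' proof of the Tate conjecture to see that distinct embeddings of the centre give non-isomorphic $\lambda$-adic components, which is what makes "traces at $T$ generate the centre" work; you should make that step explicit, since Lemma \ref{faltings' lemma} alone does not give it).

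The genuine gap is your opening and closing move: passing to $K'$ and then "undoing the passage from $K$ to $K'$" by taking "all $\Q$-subalgebras of the members of $\mathcal{R}^0$ that can arise as $\End_K^0$ of a form --- a finite operation." This is not a finite operation in any usable sense: a fixed semisimple $\Q$-algebra has infinitely many isomorphism classes of $\Q$-subalgebras (every quadratic field embeds in $M_2(\Q)$, for instance, and indeed $\End_{K'}^0(A)\iso M_2(\Q)$ with $\End_K^0(A)$ a real quadratic field is exactly the kind of case at issue), and restricting to those subalgebras "that can arise as $\End_K^0$" is circular --- that restriction is precisely the content of the proposition. Knowing $\End_{K'}^0(A)$ up to finitely many possibilities does not formally bound $\End_K^0(A)$: the bound on $\End_K^0(A)$ has to come from arithmetic over $K$, not algebra inside $\End_{K'}^0(A)$. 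The paper avoids this entirely by never passing to $K'$ in this proposition: Lemma \ref{GL2-type abelian varieties are isotypic} is applied over $K$, the traces $a_\qfrak = \tr(\rho_{A,\lambda}(\Frob_\qfrak))$ at primes $\qfrak$ of $K$ already lie in the ring of integers of the centre $F$ of $\End_K^0(B)$, and the whole height/Hermite--Minkowski and ramification analysis is run over $K$; in the CM case the paper pins the fields down as reflex fields of CM types of the maximal CM subfield of $K$ rather than by a trace argument. Your proof is repaired simply by deleting the $K'$ detour and running your middle argument over $K$ itself, as the paper does.
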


\begin{proof}
Let $N := \prod_{\pfrak\in S} \Nm\,{\pfrak}$. Let $\ell$ be a prime of $\Z$ which is prime to $N$. Let $X := \ell^{10^{10}\cdot g^{10^{10}}}$. Let $C_{2,K,N,X}$ be the explicit constant produced by the proof of Theorem \ref{irreducibility after some point}. Let $T$ be the finite set of primes $\qfrak\subseteq \o_K$ of $\o_K$ such that $\qfrak\nmid (N\ell)$ and $\Nm\,{\qfrak}\leq C_{2,K,N,X}$.

Let $A/K$ be an abelian variety of $\GL_2$-type over $K$. By Lemma \ref{GL2-type abelian varieties are isotypic} we see that either $A\sim_K B^{\times \frac{\dim{A}}{\dim{B}}}$ with $B/K$ $K$-simple and of $\GL_2$-type over $K$, or else $A\sim_K B_1^{\times \frac{\dim{A}}{2\dim{B_1}}}\times B_2^{\times \frac{\dim{A}}{2\dim{B_2}}}$ with $B_1,B_2/K$ $K$-simple and admitting sufficiently many complex multiplications over $K$. Note that the $K$-isogeny factors of $A/K$ have good reduction outside $S$ (for example by the N\'{e}ron-Ogg-Shafarevich criterion aka by consideration of N\'{e}ron models).

Let us deal with the second case, which we will call the "CM case". That is, let us deal now with the case that $A\sim_K B_1^{\times \frac{\dim{A}}{2\dim{B_1}}}\times B_2^{\times \frac{\dim{A}}{2\dim{B_2}}}$ with $B_1,B_2/K$ $K$-simple, with good reduction outside $S$, and admitting sufficiently many complex multiplications over $K$. Then the $B_i/K$ are in fact explicitly determined: writing $K^{\text{CM}}\subseteq K$ for the maximal CM subfield of $K$, we see that for each $i$ there is a CM type $\Phi_i\subseteq \Hom_{\Q\text{-alg.}}(K^{\CM}, \C)$ such that, writing $\Phi_i'\subseteq \Hom_{\Q\text{-alg.}}(K_i, \C)$ for the corresponding reflex CM type of the reflex field $K_i\subseteq \C$, the CM Hecke character corresponding to the $\ell$-adic Galois representations $\Gal(\Qbar/K)\to \Qbar_\ell^\times$ associated to $B_i/K$ is the restriction of the CM character associated to $\Phi_i$ (which is defined over a subfield of $K^{\CM}$, since the double reflex CM type $\Phi_i''$ is the primitive CM type which induces $\Phi_i$). Thus $\End_K^0(B_i)\iso K_i$, and so $\End_K^0(A)\iso M_{\frac{\dim{A}}{2\dim{B_1}}}(K_1)\times M_{\frac{\dim{A}}{2\dim{B_2}}}(K_2)$. Since the $K_i$ are determined by the corresponding CM types $\Phi_i\subseteq \Hom_{\Q\text{-alg.}}(K^{\CM},\C)$, we conclude that in this case $\End_K^0(A)$ is determined up to an explicit finite set of possibilities depending only on $g$, $K$, and $S$. The CM case follows.

Now we move to the isotypic case, i.e.\ we consider those $g$-dimensional $A/K$ with good reduction outside $S$ and of $\GL_2$-type over $K$ for which there is a $K$-simple abelian variety $B/K$ which is of $\GL_2$-type over $K$ and such that $A\sim_K B^{\times \frac{\dim{A}}{\dim{B}}}$. We immediately dispose of the case that $B/K$ admits sufficiently many complex multiplications over $K$ by taking $B_1 := B$ and $B_2 := 0$ in the above argument treating the CM case. Write $n := \frac{\dim{A}}{\dim{B}}$ (thus $\dim{B} = \frac{g}{n}$), and $D := \End_K^0(B)$. Let $F$ be the centre of $D$ --- by the Albert classification $F/\Q$ is CM. Let $d$ be the index of the division algebra $D$ over $F$. Because $B/K$ does not admit sufficiently many complex multiplications over $K$, by the Albert classification we find that either $d = 1$ or $F/\Q$ is totally real.

Let $E/\Q$ with $E\inj M_n(D)\simeq \End_K^0(A)$ be CM and such that $[E:\Q] = g$. By tensoring the inclusion $E\inj M_n(D)$ up to $\C$ we deduce that $E\otimes_\Q \C\iso \C^g\inj M_n(D\otimes_\Q \C)\iso M_{dn}(\C)^{\oplus [F:\Q]}$. Therefore $g\leq dn\cdot [F:\Q]$. By the Albert classification we have that either $F/\Q$ is totally real and $d\leq 2$ with $d\cdot [F:\Q]\,\big\vert\, \dim{B} = \frac{g}{n}$, or else that $F/\Q$ is imaginary CM and (because we have dealt with the CM case) $d > 1$ with $\frac{d^2\cdot [F:\Q]}{2}\,\big\vert\, \dim{B} = \frac{g}{n}$. We deduce that $d\leq 2$ in both cases, and that $\frac{g}{n} = d\cdot [F:\Q]$ so that $B/K$ is of $\GL_2$-type over $K$.

Let us now show that the centre $F$ of $D$ is determined up to an explicit finite set of possibilities depending only on $g$, $K$, and $S$.\footnote{This part of the argument is an effectivization of an argument of Ribet (cf.\ in particular his proofs of Propositions $3.5$ and $5.2$ in his \cite{ribet} or his proof of Theorem $2.3$ in his \cite{ribet-endomorphisms-of-semistable-abelian-varieties}).}

Let $C\subseteq M_n(D)$ be the commutant of $E\inj M_n(D)$. Of course $F\subseteq C$. We claim that $C = E$. Indeed $C$ is a division algebra because if $\phi: B^{\times n}\to B^{\times n}$ with $0\neq \phi\in C$, then its image is $K$-isogenous to $B^{\times k}$ with $k\leq n$, and because $\phi$ commutes with $E$ it follows that $E\inj M_k(D)$, which produces (after tensoring with $\C$ as above) the inequality $g\leq kd\cdot [F:\Q] = \frac{k}{n}\cdot g$, so $k = n$ and $\phi$ is surjective and thus a $K$-isogeny. Moreover, writing $E'/E$ for the centre of $C$, we find in precisely the same way that $[E':\Q]\leq nd\cdot [F:\Q] = g = [E:\Q]$ so that $E' = E$. Thus $C$ is a division algebra with centre $E$. Writing $\delta\in \Z^+$ for the index of $C$ over $E$, we deduce from tensoring the inclusion $C\inj M_n(D)$ up to $\C$ the inclusion $M_\delta(\C)^{\oplus [E:\Q]}\inj M_{nd}(\C)^{\oplus [F:\Q]}$ and thus the inequality $\delta g\leq nd\cdot [F:\Q] = g$. Thus $\delta = 1$ and so $C = E$. In particular $F\subseteq E$.

Let now $\lambda\subseteq \o_E$ be a prime of $\o_E$ with $\lambda\vert (\ell)$. Write $\lambda'\subseteq \o_F$ with $\lambda\vert \lambda'$ for the prime of $\o_F$ below $\lambda$. Let $\rho_{A,\lambda}: \Gal(\Qbar/K)\to \GL_2(\o_{E,\lambda})$ be the $\lambda$-adic representation corresponding to $E\inj \End_K^0(A)$. Let $\rho_{B,\lambda'}: \Gal(\Qbar/K)\to (D^{\text{opp.}}\otimes_F F_{\lambda'})^\times\subseteq \GL_{2d}(F_{\lambda'})$ be the $\lambda'$-adic representation (produced via e.g.\ the double centralizer theorem) corresponding to $D\inj \End_K^0(B)$. Write, for $\qfrak\in T$, $$a_\qfrak := \tr(\rho_{A,\lambda}(\Frob_\qfrak))\in \o_E.$$ Note that $a_\qfrak = \tr(\rho_{B,\lambda'}(\Frob_\qfrak))\in \o_F$ as well.

We claim that $\Q(\{a_\qfrak\}_{\qfrak\in T}) = F$.

We have noted the forward inclusion already, so let us show the reverse inclusion via an effective form of a standard argument (cf.\ e.g.\ Ribet's proof of Proposition $3.5$ in his \cite{ribet}). For $\sigma: F\inj \Qbar_\ell$, write $\lambda_\sigma'\vert (\ell)$ for the corresponding prime of $\o_F$. Similarly, for $\tau: F'\inj \Qbar_\ell$, write $\lambda_\tau''\vert (\ell)$ for the corresponding prime of $\o_{F'}$. Note that then $\sigma$ extends to an embedding $F_{\lambda_\sigma'}\inj \Qbar_\ell$, and similarly $\tau$ extends to an embedding $F_{\lambda_\tau''}'\inj \Qbar_\ell$. From the isomorphisms
\begin{align*}
\rho_{B,\ell}\otimes_{\Q_\ell} \Qbar_\ell&\simeq \bigoplus_{\tau: F'\shortinj \Qbar_\ell} \rho_{B,\lambda_\tau''}\otimes_{F_{\lambda_\tau''}', \tau} \Qbar_\ell
\\&\simeq \bigoplus_{\sigma: F\shortinj \Qbar_\ell}\bigoplus_{\tau: F'\shortinj \Qbar_\ell \text{ s.t.\,} \tau\vert_F = \sigma} \rho_{B,\lambda_\tau''}\otimes_{F_{\lambda_\tau''}', \tau} \Qbar_\ell,
\end{align*}
and $$\End_{\Qbar_\ell[\Gal(\Qbar/K)]}(\rho_{B,\ell}\otimes_{\Q_\ell} \Qbar_\ell)\simeq \bigoplus_{\sigma: F\shortinj \Qbar_\ell} M_2(\Qbar_\ell)$$ (via Faltings' proof of the Tate conjecture for endomorphisms of abelian varieties), we conclude that, for $\tau,\tau': F'\inj \Qbar_\ell$, there is an isomorphism of absolutely irreducible representations $\rho_{B,\lambda_\tau''}\otimes_{F_{\lambda_\tau''}', \tau} \Qbar_\ell\iso \rho_{B,\lambda_{\tau'}''}\otimes_{F_{\lambda_{\tau'}''}', \tau'} \Qbar_\ell$ if and only if $\tau\vert_F = \tau'\vert_F$. In particular, if $\tau\vert_F\neq \tau'\vert_F$ the representations $\rho_{B,\lambda_\tau''}$ and $\rho_{B,\lambda_{\tau'}''}$ are not isomorphic.

Now we use Lemma \ref{faltings' lemma}. For each $\tau: F'\inj \Qbar_\ell$ with $\sigma := \tau\vert_F$, $$\sigma(a_\qfrak) = \tr(\rho_{B,\lambda_\tau''}(\Frob_\qfrak)).$$ Because the tuple $(\sigma(a_\qfrak))_{\qfrak\in T}$ determines the isomorphism class of the irreducible representation $\rho_{B,\lambda_\tau''}$ by our choice of $T$ and Lemma \ref{faltings' lemma}, we have proven that $\sigma(a_\qfrak) = \sigma'(a_\qfrak)$ for all $\qfrak\in T$ only if $\sigma = \sigma'$ as embeddings $F\inj \Qbar_\ell$. Therefore $\Q(\{a_\qfrak\}_{\qfrak\in T}) = F$ as desired.

Now because $a_\qfrak\in \o_F$ and also because by Weil's \cite{weil} for all infinite places $v$ of $F$ we have that $|a_\qfrak|_v\leq 2\sqrt{\Nm\,{\qfrak}}$, it follows that all the $a_\qfrak$ are algebraic integers of bounded height. Thus $F/\Q$ is an extension of bounded degree and discriminant and thus by Minkowski's proof of the Hermite-Minkowski theorem lies in an explicit finite set of possibilities depending only on $g$, $K$, and $S$.

So we have deduced that the centre $F$ of $D$ is determined up to an explicit finite set of possibilities depending only on $g$, $K$, and $S$. Let us next show that furthermore the absolute norm of the discriminant of $D/F$ is $\ll_{g,K,S} 1$.

The statement is of course evident when $d = 1$ (and thus $D = F$) since said discriminant is $1$. Thus we need only treat the case $d = 2$, i.e.\ the case that $D/F$ is a quaternion algebra. Let $F'/F$ be a quadratic extension splitting $D$. Let $p\geq 10^{10}$ be a prime of $\Z$. Let $\pfrak\subseteq \o_F$ be a prime of $\o_F$ such that $\pfrak\vert (p)$ and $D$ is ramified at $\pfrak$, i.e.\ $D_\pfrak := D\otimes_F F_\pfrak$ is a division algebra over $F_\pfrak$. Because $F'$ splits $D$ it follows that $\pfrak$ is not split in $F'$ (else $F'_{\pfrak'}\simeq F_\pfrak$ for all primes $\pfrak'\subseteq \o_{F'}$ of $\o_{F'}$ with $\pfrak'\vert \pfrak$, and so $D\otimes_F F_{\pfrak'}'$ would be a division algebra). Let $\pfrak'\subseteq \o_{F'}$ with $\pfrak'\vert \pfrak$ be the unique prime of $\o_{F'}$ above $\pfrak$. Let $\pi$ be a uniformizer of $\pfrak$. Let $u\in \o_{F,\pfrak}^\times$ be a nonsquare unit. Recall from the classification of quaternion algebras over $p$-adic fields that there is a unique quaternion algebra, namely the one with symbol $(\pi, u)_\pfrak$, which is a division algebra over $F_\pfrak$, and so $D_\pfrak$ is the quaternion algebra over $F_\pfrak$ with symbol $(\pi, u)_\pfrak$. Thus $D_\pfrak$ has $F_\pfrak$-basis $(1, i, j, k)$ with $i^2 = \pi$, $j^2 = u$, and $k := ij = -ji$. Moreover it has a unique maximal order, namely $\o_{D,\pfrak} := \o_{F,\pfrak}\cdot 1 + \o_{F,\pfrak}\cdot i + \o_{F,\pfrak}\cdot j + \o_{F,\pfrak}\cdot k$ (aka the set of elements with integral norm).

Now because $\pfrak$ is not split in $F'$ (and $p$ is large) it follows that either $F_{\pfrak'}' = F_\pfrak(\sqrt{\pi})$ or else $F_{\pfrak'}' = F_\pfrak(\sqrt{u})$. We break into cases.

First let us deal with the ramified case, i.e.\ the case $F_{\pfrak'}' = F_\pfrak(\sqrt{\pi})$. Let $\pi' := \sqrt{\pi}$ be a uniformizer of $\pfrak'$. Recall that $\o_{F',\pfrak'} = \o_{F,\pfrak}[\pi']$ and that $\o_F/\pfrak\simeq \o_{F'}/\pfrak'$. Because $D_\pfrak$ is a division algebra and $F'\inj D$ (and so $F_{\pfrak'}'\inj D_\pfrak$), it follows\footnote{Writing $\alpha\in D$ for the image of $\sqrt{\pi}\in F_{\pfrak'}'$ and $\ell_i, r_\alpha: D\to D$ for the (commuting) $F$-linear transformations given by $\ell_i(x) = i\cdot x$ and $r_\alpha(x) = x\cdot \alpha$, we have that $(\ell_i - r_\alpha)\cdot (\ell_i + r_\alpha) = 0$, whence one of the two factors has a nonzero kernel, and a nonzero element in said kernel conjugates $\alpha$ to $\pm i$.} that we may without loss of generality take the embedding $F_{\pfrak'}'\inj D_\pfrak$ to be via $\pi'\mapsto \pm i$. By precomposing with an element of $\Gal(F_{\pfrak'}'/F_\pfrak) = \Gal(F'/F)$ if necessary we may and will arrange that the embedding is via $\pi'\mapsto i$. Note then that $\o_{D,\pfrak} = \o_{F',\pfrak'} + \o_{F',\pfrak'}\cdot j$.

Using this basis (i.e.\ $(1,j)$), we see that the action of $\o_{D,\pfrak}$ on $\o_{D,\pfrak}$, regarded as a free $\o_{F',\pfrak'}$-module of rank $2$, is given by $\o_{D,\pfrak}\to \GL_2(\o_{F',\pfrak'})$ via $$a + bi + cj + dk\mapsto \twobytwo{a + b\pi'}{u\cdot (c + d\pi')}{c + d\pi'}{a - b\pi'}.$$ Therefore the action of $\o_{D,\pfrak}\otimes_{\o_{F,\pfrak}} \o_F/\pfrak$ on $\o_{D,\pfrak}\otimes_{\o_{F,\pfrak}} \o_F/\pfrak = \o_{D,\pfrak}\otimes_{\o_{F',\pfrak'}} \o_{F'}/\pfrak'$ is given by $$a + bi + cj + dk\mapsto \twobytwo{a}{u\cdot c}{c}{a}\bmod{\pfrak'}\in \GL_2(\o_F/\pfrak),$$ which is visibly abelian (indeed the image lies in the nonsplit Cartan) and thus not absolutely irreducible.

Therefore it follows that the $\pfrak'$-adic representation $\rho_{B,\pfrak'}: \Gal(\Qbar/K)\to (\o_{D,\pfrak}^\opp)^\times\inj \GL_2(\o_{F',\pfrak'})$ does not have an absolutely irreducible residual representation. We conclude from Theorem \ref{irreducibility after some point} that $p\ll_{g,K,S} 1$.

We treat the unramified case, i.e.\ the case $F_{\pfrak'}' = F_\pfrak(\sqrt{u})$, in the same way. Now we choose as uniformizer $\pi' := \pi$, observe that $\o_{F',\pfrak'} = \o_{F,\pfrak}[\sqrt{u}]$, and see that, up to precomposing with an element of $\Gal(F_{\pfrak'}'/F_\pfrak) = \Gal(F'/F)$ if necessary, we may without loss of generality take the embedding $F_{\pfrak'}'\inj D_\pfrak$ to be given by $\sqrt{u}\mapsto j$. Thus we may use $(1, i)$ as our $\o_{F',\pfrak'}$-basis of $\o_{D,\pfrak}$, and in this basis the action of $\o_{D,\pfrak}$ on $\o_{D,\pfrak}$, regarded as a free $\o_{F',\pfrak'}$-module of rank $2$, is given by $$a + bi + cj + dk\mapsto \twobytwo{a + c\sqrt{u}}{\pi\cdot (b + d\sqrt{u})}{b + d\sqrt{u}}{a - c\sqrt{u}}.$$ Thus the action of $\o_{D,\pfrak}\otimes_{\o_{F,\pfrak}} \o_F/\pfrak$ on $\o_\pfrak\otimes_{\o_{F',\pfrak'}} \o_{F'}/\pfrak'$ is given by $$a + bi + cj + dk\mapsto \twobytwo{a + c\sqrt{u}}{0}{b + d\sqrt{u}}{a - c\sqrt{u}}\bmod{\pfrak'}\in \GL_2(\o_{F'}/\pfrak'),$$ which is again visibly not irreducible since its image lies in a Borel.

Therefore it follows that the $\pfrak'$-adic representation $\rho_{B,\pfrak'}: \Gal(\Qbar/K)\to (\o_{D,\pfrak}^\opp)^\times\inj \GL_2(\o_{F',\pfrak'})$ does not have absolutely irreducible residual representation, and so we again conclude that $p\ll_{g,K,S} 1$.

So in sum we have bounded the absolute norm of the discriminant of $D/F$, since we have shown that the only primes at which $D/F$ may ramify are of norm $\ll_{g,K,S} 1$.

We conclude that there is an explicit finite set of possibilities depending only on $g$, $K$, and $S$ for $D = \End_K^0(B)$. We are done.
\end{proof}

Let us also record the following corollary.

\begin{cor}\label{the geometric endomorphism algebra is one of an explicit finite set of possibilities}
Let $g\in \Z^+$. Let $K/\Q$ be a number field. Let $S$ be a finite set of places of $K$. Then: there is an explicit finite set of (isomorphism classes of) $\Q$-algebras $\widetilde{\mathcal{R}}_{g,K,S}^0$ depending only on $g$, $K$, and $S$ such that the following holds.
\begin{itemize}
\item Let $A/K$ be a $g$-dimensional abelian variety of $\GL_2$-type over $K$ which has good reduction outside $S$. Then: $\End_{\Qbar}^0(A)\in \widetilde{\mathcal{R}}_{g,K,S}^0$.
\end{itemize}
\end{cor}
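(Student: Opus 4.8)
The plan is to bootstrap from Proposition~\ref{the endomorphism algebra is one of an explicit finite set of possibilities} by passing to the explicit finite extension over which all geometric endomorphisms are already defined. First I would invoke Lemma~\ref{everything happens over an explicit finite extension} with the given data $g$, $K$, $S$: this produces a finite Galois extension $K'/K$, depending only on $g$, $K$, and $S$, with the property that every $g$-dimensional abelian variety $A/K$ with good reduction outside $S$ becomes split semistable over $K'$ and satisfies $\End_{K'}(A) = \End_{\Qbar}(A)$. Let $S'$ denote the set of places of $K'$ lying above the places in $S$; this is a finite set, and it too is explicit in $g$, $K$, and $S$ because $K'$ is.

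Next, fix a $g$-dimensional $A/K$ that is of $\GL_2$-type over $K$ and has good reduction outside $S$. I would check the two compatibilities needed in order to feed $A/K'$ into Proposition~\ref{the endomorphism algebra is one of an explicit finite set of possibilities}: first, $A/K'$ still has good reduction outside $S'$ (good reduction is stable under base change, and a place of $K'$ above a place of good reduction of $K$ is again a place of good reduction --- e.g.\ by the N\'{e}ron--Ogg--Shafarevich criterion); second, $A/K'$ is still of $\GL_2$-type over $K'$, since a number field of degree $g = \dim A$ contained in $\End_K^0(A)$ is a fortiori contained in $\End_{K'}^0(A) \supseteq \End_K^0(A)$. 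Applying Proposition~\ref{the endomorphism algebra is one of an explicit finite set of possibilities} with $(g, K', S')$ in place of $(g, K, S)$ then gives $\End_{K'}^0(A) \in \mathcal{R}_{g,K',S'}^0$, and by the defining property of $K'$ the left-hand side is exactly $\End_{\Qbar}^0(A)$. Hence one may simply take $\widetilde{\mathcal{R}}_{g,K,S}^0 := \mathcal{R}_{g,K',S'}^0$, which is an explicit finite set depending only on $g$, $K$, and $S$.

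There is essentially no obstacle here: all of the real work is already contained in Proposition~\ref{the endomorphism algebra is one of an explicit finite set of possibilities} (the effective Faltings--Serre/Ribet argument bounding the centre $F$, together with the reducibility estimate Lemma~\ref{irreducibility after some point} bounding the primes at which the quaternion algebra $D/F$ can ramify) and in Lemma~\ref{everything happens over an explicit finite extension}. The only things to verify are the stability of ``good reduction outside $S$'' and of ``$\GL_2$-type'' under the explicit base change $K \to K'$, both of which are immediate; the mild bookkeeping point worth stating explicitly is just that $S'$, and hence the output set, depends only on $g$, $K$, and $S$.
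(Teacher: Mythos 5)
Your proposal is correct and is essentially the paper's own proof: the paper likewise takes $K'$ from Lemma \ref{everything happens over an explicit finite extension} and sets $\widetilde{\mathcal{R}}_{g,K,S}^0 := \mathcal{R}_{g,K',S}^0$ (with $S$ read as the places of $K'$ above $S$), the only difference being that you spell out the routine compatibilities (good reduction after base change, preservation of $\GL_2$-type, and $\End_{K'}^0(A) = \End_{\Qbar}^0(A)$) which the paper leaves implicit.
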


\begin{proof}
Let $K'/K$ be the explicit finite Galois extension produced by the proof of Lemma \ref{everything happens over an explicit finite extension} and let $\widetilde{\mathcal{R}}_{g,K,S}^0 := \mathcal{R}_{g,K',S}^0$.
\end{proof}

\section{Two "large image" propositions.}

Now let us state the two intermediate results we will use.

\subsection{Large primes.}

Let us first deal with large primes.

\begin{prop}\label{large prime lemma}
Let $g\in \Z^+$. Let $K/\Q$ be a number field. Let $S$ be a finite set of places of $K$. Then: there is an explicit (thus effectively computable) constant $C_{g,K,S}\in \Z^+$ depending only on $g$, $K$, and $S$ such that the following two statements hold.
\begin{itemize}
\item Let $p\geq C_{g,K,S}$ be a prime of $\Z$. Let $F/\Q$ be a number field of degree $[F:\Q] = g$. Let $A/K$ be a $g$-dimensional abelian variety over $K$ admitting $\o_F\simeq \End_K(A) = \End_{\Qbar}(A)$ which has good reduction outside $S$ and does not admit sufficiently many complex multiplications over $\Qbar$. Write $\rho_{A,p}: \Gal(\Qbar/K)\to \GL_2(\o_{F,p})$ for the $2$-dimensional representation corresponding to the $\o_F[\Gal(\Qbar/K)]$-module structure of $T_p(A)$. Then: $\Z_p[\rho_{A,p}(\Gal(\Qbar/K))] = M_2(\o_{F,p})$.
\item Let $p\geq C_{g,K,S}$ be a prime of $\Z$. Let $F/\Q$ be a number field of degree $[F:\Q] = \frac{g}{2}$ and let $D/F$ be a quaternion algebra over $F$. Let $\o\subseteq D$ be a maximal order. Let $A/K$ be a $g$-dimensional abelian variety over $K$ admitting $\o\simeq \End_K(A) = \End_{\Qbar}(K)$ which has good reduction outside $S$ and does not admit sufficiently many complex multiplications over $\Qbar$. Write $\rho_{A,p}: \Gal(\Qbar/K)\to (\o_p^{\opp})^\times$ for the representation corresponding to the $\o[\Gal(\Qbar/K)]$-module structure of $T_p(A)$. Then: $\Z_p[\rho_{A,p}(\Gal(\Qbar/K))] = \o_p^\opp$.
\end{itemize}
\end{prop}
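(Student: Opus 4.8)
The plan is to follow the sketch in the introduction: a ``trace trick'' pins down the scalars $\o_{F,p}$ (resp.\ the centre) inside the $\Z_p$-span of the image, and then a Morita decomposition followed by Nakayama together with the big-image input of Lemma~\ref{irreducibility after some point} finishes. I will describe the $\GL_2$-type case; the quaternionic one is identical once one replaces $M_2(\o_{F,\pfrak})$ by $\o_\pfrak := \o\otimes_{\o_F}\o_{F,\pfrak}$, the trace by the reduced trace $\mathrm{trd}$, the determinant by the reduced norm $\mathrm{nrd}$, and observes that by Proposition~\ref{the endomorphism algebra is one of an explicit finite set of possibilities} the discriminant of $D/F$ is $\ll_{g,K,S} 1$, so that for $p\gg_{g,K,S} 1$ every $\pfrak\mid(p)$ splits $D$, i.e.\ $\o_\pfrak\iso M_2(\o_{F,\pfrak})$. \emph{Step 0 (normalising the determinant):} First I would note that it suffices to prove the conclusion after replacing $K$ by an explicit finite extension $L/K$ depending only on $g,K,S$, since $\Gal(\Qbar/L)\subseteq\Gal(\Qbar/K)$ gives $\Z_p[\rho_{A,p}(\Gal(\Qbar/L))]\subseteq\Z_p[\rho_{A,p}(\Gal(\Qbar/K))]\subseteq M_2(\o_{F,p})$, so equality on the left forces equality on the right (this works precisely because $M_2(\o_{F,p})$ is already the maximal possibility). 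I would take $L$ to contain the field of Lemma~\ref{everything happens over an explicit finite extension} and, writing $\det\rho_{A,p}=\chi_p\cdot\eps_A$ with $\chi_p$ the $p$-adic cyclotomic character, also the fixed field of $\eps_A$; here $\eps_A$ is finite-order, independent of $p$, unramified outside $S$, and valued in $\mu(F)$ (hence of order $\ll_g 1$), so it cuts out an abelian extension of $K$ of bounded exponent unramified outside $S$, and the compositum of all such over the finitely many admissible $F$ is an explicit finite extension of $K$ by Hermite--Minkowski. After this reduction I may assume $\det\rho_{A,p}=\chi_p$, in particular that it is $\Z_p^\times$-valued.

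\emph{Step 1 (the trace trick).} Next I would fix an auxiliary prime $\ell\nmid N:=\prod_{\pfrak\in S}\Nm\,\pfrak$ and the associated Faltings--Serre set $T$ of primes of $\o_K$ furnished by Lemma~\ref{faltings' lemma}, exactly as in the proof of Proposition~\ref{the endomorphism algebra is one of an explicit finite set of possibilities}. From the identity $g+(\det g)\,g^{-1}=(\tr g)\,\id$ in $\GL_2$, applied to $g=\rho_{A,p}(\Frob_\qfrak)$ for $\qfrak\in T$ (a prime of good reduction with $\qfrak\nmid(p)$ once $p$ is large, as $\Nm\,\qfrak\ll_{g,K,S} 1$) together with $\det g=\chi_p(\Frob_\qfrak)\in\Z_p^\times$, I get that $\Z_p[\rho_{A,p}(\Gal(\Qbar/K))]$ contains the scalar $a_\qfrak\cdot\id$, where $a_\qfrak:=\tr(\rho_{A,p}(\Frob_\qfrak))=\tr(\rho_{A,\ell}(\Frob_\qfrak))\in\o_F$. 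Now the proof of Proposition~\ref{the endomorphism algebra is one of an explicit finite set of possibilities} gives $\Q(\{a_\qfrak\}_{\qfrak\in T})=F$, while Weil's Riemann hypothesis bounds $|a_\qfrak|_v\leq 2\sqrt{\Nm\,\qfrak}\ll_{g,K,S} 1$ at every archimedean $v$; hence $\Z[\{a_\qfrak\}_{\qfrak\in T}]$ is an order in $F$ of discriminant $\ll_{g,K,S} 1$, so of index $\ll_{g,K,S} 1$ in $\o_F$. Therefore for $p\gg_{g,K,S} 1$ one has $\Z_p[\rho_{A,p}(\Gal(\Qbar/K))]\supseteq\o_{F,p}\cdot\id$.

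\emph{Step 2 (Morita and Nakayama), and the main obstacle.} Since the span now contains the scalars $\o_{F,p}=\prod_{\pfrak\mid(p)}\o_{F,\pfrak}$, hence the idempotents $e_\pfrak$, it decomposes as $\prod_{\pfrak\mid(p)}\o_{F,\pfrak}[\rho_{A,\pfrak}(\Gal(\Qbar/K))]$ with $\rho_{A,\pfrak}:\Gal(\Qbar/K)\to\GL_2(\o_{F,\pfrak})$ the $\pfrak$-component. For each $\pfrak\mid(p)$ I would apply Nakayama to the finitely generated $\o_{F,\pfrak}$-module $\o_{F,\pfrak}[\rho_{A,\pfrak}(\Gal(\Qbar/K))]\subseteq M_2(\o_{F,\pfrak})$: it equals $M_2(\o_{F,\pfrak})$ provided $(\o_F/\pfrak)[\rhobar_{A,\pfrak}(\Gal(\Qbar/K))]=M_2(\o_F/\pfrak)$, and this follows from Lemma~\ref{irreducibility after some point} (or Theorem~\ref{huge image after some point}) --- applicable since $A$ is $\Qbar$-simple (its geometric endomorphism ring being an order in a division algebra), of $\GL_2$-type, and without sufficiently many complex multiplications --- because for $p\gg_{g,K,S} 1$ the residual image contains a conjugate of $\SL_2(\F_q)$ for some subfield $\F_q\subseteq\o_F/\pfrak$, and for $q$ large the $\F_q$-span, a fortiori the $(\o_F/\pfrak)$-span, of $\SL_2(\F_q)$ is all of $M_2$. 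Taking $C_{g,K,S}$ to be the maximum of the finitely many explicit thresholds above completes the argument. The step I expect to be the genuinely fiddly one is Step~0: the trace trick only extracts traces once $\det\rho_{A,p}$ is $\Z_p^\times$-valued, so one must check that a \emph{single} explicit finite base change simultaneously trivialises the Nebentypus $\eps_A$ for \emph{all} admissible $A$ and \emph{all} large $p$ --- i.e.\ uniformly bound its conductor and order and verify independence of $p$ --- and keep straight that enlarging $K$ cannot weaken the conclusion; everything else is either formal (Morita/Nakayama) or cited (the Faltings--Serre input and Lemma~\ref{irreducibility after some point}).
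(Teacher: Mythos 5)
Your proposal is correct and follows essentially the same route as the paper's proof: pass to an explicit extension $L/K$ making $\det\rho_{A,p}$ cyclotomic (hence $\Z_p^\times$-valued), use the identity $g+(\det g)g^{-1}=(\tr g)\,\id$ at a Faltings--Serre set together with Weil bounds to force $\o_{F,p}\subseteq\Z_p[\rho_{A,p}(\Gal(\Qbar/K))]$, decompose along the primes $\pfrak\mid(p)$, and conclude by Nakayama from the residual large-image Lemma~\ref{irreducibility after some point}, with the quaternionic case reduced to the split one since the discriminant of $D/F$ is bounded by Proposition~\ref{the endomorphism algebra is one of an explicit finite set of possibilities}. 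The only cosmetic difference is in Step~0, where you trivialise the bounded-order, bounded-conductor nebentypus $\eps_A$ directly, while the paper trivialises all finite-order characters of conductor dividing an explicit $\Delta_N$ over a single Hermite--Minkowski extension --- the same mechanism.
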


\begin{proof}
Certainly in both cases $A/K$ is $K$-simple.

By Proposition \ref{the endomorphism algebra is one of an explicit finite set of possibilities} it follows that $\End_K^0(A)$ lies in an explicit finite set. By the finiteness of the class number, whence type number, of a quaternion algebra over a number field, it follows that the maximal order $\End_K(A)$ lies in an explicit finite set as well.

Let $N := \prod_{\pfrak\in S} (\Nm\,{\pfrak})$. Let $\ell\in \Z^+$ be the smallest prime of $\Z$ which is prime to $10^{10}!\cdot N$.

Let $T$ be the finite set of primes produced by Lemma \ref{faltings' lemma} with parameters $(2,K,N,\ell^{10^{10 g}})$.

Let, via Minkowski's proof of the Hermite-Minkowski theorem, $L/K$ be an explicit finite extension such that, for all finite-order characters $\psi: \Gal(\Qbar/K)\to \C^\times$ of conductor dividing $\Delta_N := N^{10^{10 g}\cdot [K:\Q]}$, $\psi\vert_{\Gal(\Qbar/L)} = \triv$.

Let us deal with the first case first. Let $A/K$ be a $g$-dimensional abelian variety over $K$ admitting $\o_F\simeq \End_K(A)$ which has good reduction outside $S$ and which does not admit sufficiently many complex multiplications over $\Qbar$.

Now $\chi_p^{-1}\cdot \det{\rho_{A,p}}$ is finite-order and of conductor dividing $\Delta_N$. Thus $\det{\rho_{A,p}}\vert_{\Gal(\Qbar/L)} = \chi_p$ and in particular has image in $\Z_p^\times$.

It of course suffices to show the claim after restricting to $\Gal(\Qbar/L)$, so without loss of generality $L = K$ and $\det{\rho_{A,p}}$ is valued in $\Z_p^\times$.

Now for all $g\in \GL_2(\o_{F,p})$ we have that $$g + (\det{g})\cdot g^{-1} = (\tr\,{g})\cdot \id.$$ Applying this identity to all $\Frob_\qfrak$ with $\qfrak\in T$ we find that $$\Z_p[\rho_{A,p}(\Gal(\Qbar/K))]\supseteq \Z[\{\tr(\rho_{A,p}(\Frob_\qfrak)) : \qfrak\in T\}]\otimes_\Z \Z_p.$$

Of course by strict compatibility we have that $\tr(\rho_{A,p}(\Frob_\qfrak))\in \o_F\inj \o_{F,p}$ for all $\qfrak\in T$.

Now by the second part of Lemma \ref{faltings' lemma} it follows that $\Q_\ell[\tr(\rho_{A,p}(\Gal(\Qbar/K)))] = \Q_\ell[\{\tr(\rho_{A,p}(\Frob_\qfrak)) : \qfrak\in T\}]$. Writing $\rho_{A,\ell}\otimes_{\Z_\ell} \Qbar_\ell =: \bigoplus_{\lambda\vert (\ell)} \bigoplus_{\sigma: F_\lambda\inj \Qbar_\ell} \rho_{A,\lambda}\otimes_{\o_{F,\lambda}, \sigma} \Qbar_\ell$ (with the obvious notation), it follows from Faltings' proof of the Tate conjecture that the $\rho_{A,\lambda}\otimes_{\o_{F,\lambda}, \sigma} \Qbar_\ell$ are pairwise non-isomorphic as $\Qbar_\ell$-representations.

By Lemma \ref{faltings' lemma} and Brauer-Nesbitt it follows that, for all embeddings $\sigma, \sigma': F\inj \Qbar_\ell$, the equality $\sigma(\tr(\rho_{A,\ell}(\Frob_\qfrak))) = \sigma'(\tr(\rho_{A,\ell}(\Frob_\qfrak)))$ holds for all $\qfrak\in T$ if and only if $\sigma = \sigma'$.

Consequently $\Z[\{\tr(\rho_{A,\ell}(\Frob_\qfrak) : \qfrak\in T\}]\subseteq \o_F$ is of finite index. Since by e.g.\ purity the generators $\tr(\rho_{A,\ell}(\Frob_\qfrak))$ are of bounded height, it follows that the discriminant of $\Z[\{\tr(\rho_{A,\ell}(\Frob_\qfrak) : \qfrak\in T\}]$ is $\ll_{g,K,S} 1$, whence the same bound holds for its index in $\o_F$, whence we have that $$\Z_p[\{\tr(\rho_{A,\ell}(\Frob_\qfrak) : \qfrak\in T\}] = \o_{F,p}.$$

We conclude that $\Z_p[\rho_{A,p}(\Gal(\Qbar/K))]\supseteq \o_{F,p}$, or in other words that $$\Z_p[\rho_{A,p}(\Gal(\Qbar/K))] = \o_{F,p}[\rho_{A,p}(\Gal(\Qbar/K))] = \bigoplus_{\pfrak\vert (p)} \o_{F,\pfrak}[\rho_{A,\pfrak}(\Gal(\Qbar/K))].$$

Now by Lemma \ref{irreducibility after some point} it follows that $\overline{\rho}_{A,\pfrak}(\Gal(\Qbar/K))$ contains a conjugate of $\SL_2(\F_p)$. Thus evidently $\o_{F,\pfrak}[\overline{\rho}_{A,\pfrak}(\Gal(\Qbar/K))]\otimes_{\o_F} \o_F/\pfrak = M_2(\o_F/\pfrak)$.

Thus by Nakayama we find that $\o_{F,\pfrak}[\rho_{A,\pfrak}(\Gal(\Qbar/K))] = M_2(\o_{F,\pfrak})$, whence $$\Z_p[\rho_{A,p}(\Gal(\Qbar/K))] = M_2(\o_{F,p})$$ as desired.

So we have dealt with the first case.

But the above treats the second case as well since $D_p\simeq M_2(F_p)$ because $p$ is large (since by Proposition \ref{the endomorphism algebra is one of an explicit finite set of possibilities} the discriminant of $\o$ is $\ll_{g,K,S} 1$). By uniqueness up to conjugation of the maximal order $M_2(\o_{F,\pfrak})$ of $M_2(F_\pfrak)$, it follows that, up to conjugation by an element of $D_p^\times$, $\o_p\simeq M_2(\o_{F,p})$, and now the entire argument goes through verbatim. So we are done.
\end{proof}

\subsection{Small primes.}

Now let us prove the corresponding statement for small primes.

\begin{prop}\label{small prime lemma}
Let $g\in \Z^+$. Let $K/\Q$ be a number field. Let $S$ be a finite set of places of $K$. Let $p$ be a prime of $\Z$. Then: there is an effectively computable constant $C_{g,K,S,p}\in \Z^+$ depending only on $g$, $K$, $S$, and $p$ such that the following two statements hold.
\begin{itemize}
\item Let $F/\Q$ be a number field of degree $[F:\Q] = g$. Let $A/K$ be a split semistable $g$-dimensional abelian variety over $K$ admitting $\o_F\simeq \End_K(A) = \End_{\Qbar}(A)$ which has good reduction outside $S$ and does not admit sufficiently many complex multiplications over $\Qbar$. Write $\rho_{A,p}: \Gal(\Qbar/K)\to \GL_2(\o_{F,p})$ for the $2$-dimensional representation corresponding to the $\o_F[\Gal(\Qbar/K)]$-module structure of $T_p(A)$. Then: $$[M_2(\o_{F,p}) : \Z_p[\rho_{A,p}(\Gal(\Qbar/K))]]\leq C_{g,K,S,p}.$$
\item Let $F/\Q$ be a number field of degree $[F:\Q] = \frac{g}{2}$ and let $D/F$ be a quaternion algebra over $F$. Let $\o\subseteq D$ be a maximal order. Let $A/K$ be a split semistable $g$-dimensional abelian variety over $K$ admitting $\o\simeq \End_K(A) = \End_{\Qbar}(K)$ which has good reduction outside $S$ and does not admit sufficiently many complex multiplications over $\Qbar$. Write $\rho_{A,p}: \Gal(\Qbar/K)\to (\o_p^{\opp})^\times$ for the representation corresponding to the $\o[\Gal(\Qbar/K)]$-module structure of $T_p(A)$. Then: $$[\o_p^\opp : \Z_p[\rho_{A,p}(\Gal(\Qbar/K))]]\leq C_{g,K,S,p}.$$
\end{itemize}
\end{prop}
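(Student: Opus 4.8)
The plan is to run the argument of Proposition \ref{large prime lemma} unchanged up to the point at which it would invoke Lemma \ref{irreducibility after some point} --- unavailable here, since $p$ is a fixed prime --- and then to replace the concluding Nakayama step by an elementary lattice-theoretic argument. We spell out the $\GL_2(F)$-type case; the quaternionic case is the same with $\o_p^\opp$ (resp.\ a maximal order of $D_p^\opp$) in place of $M_2(\o_{F,p})$, together with the remark that at the finitely many $\pfrak\mid p$ at which $D$ ramifies --- finitely many, as $\disc{D}\ll_{g,K,S}1$ by Proposition \ref{the endomorphism algebra is one of an explicit finite set of possibilities} --- the completion $D_\pfrak$ is a division algebra and so has a unique maximal order, making the final step below vacuous there. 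As in Proposition \ref{large prime lemma}, by Proposition \ref{the endomorphism algebra is one of an explicit finite set of possibilities} and the finiteness of the type number of a quaternion algebra over a number field we may and do fix $F$ (hence $\o_{F,p}$), and we put $N:=\prod_{\pfrak\in S}\Nm\,{\pfrak}$.

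The key point that distinguishes the small-prime case is that, because $A$ has good reduction outside $S$, the representation $\rho_{A,p}$ is unramified outside the primes of $K$ dividing $Np$, so that Lemma \ref{faltings' lemma} applies \emph{with the auxiliary prime taken to be $p$ itself}. Taking $d=2$, replacing $N$ by $Np$, and taking $X$ an explicit bound with $\#|\o_F/\pfrak|\leq X$ for every admissible $F$ and every $\pfrak\mid p$, Lemma \ref{faltings' lemma} produces an explicit finite set $T$ of primes $\qfrak\subseteq\o_K$, all prime to $Np$, such that
$$R:=\Z_p[\rho_{A,p}(\Gal(\Qbar/K))] = \Z_p[\{\rho_{A,p}(\Frob_\qfrak):\qfrak\in T\}]$$
and such that any two representations as above with matching Frobenius traces on $T$ have the same trace everywhere. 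Now $a_\qfrak:=\tr(\rho_{A,p}(\Frob_\qfrak))\in\o_F$ (strict compatibility), and $|a_\qfrak|_v\leq 2\sqrt{\Nm\,{\qfrak}}$ at every infinite place $v$ of $F$ by Weil's \cite{weil} (the $\qfrak\in T$ being primes of good reduction, as they avoid $S$ and $p$), so $(a_\qfrak)_{\qfrak\in T}$ runs over an explicit finite set of tuples of algebraic integers of bounded degree and house; and $\det{\rho_{A,p}}=\chi_p\cdot\varepsilon_A$ with $\varepsilon_A$ a finite-order Hecke character of $K$ unramified outside $S$ of explicitly bounded conductor exponents (using that $A$ is semistable: away from $p$ the inertial restriction of $\rho_{A,p}$ at the bad primes is unipotent hence has trivial determinant, and at primes above $p$ the determinant of the Tate module of a semistable abelian variety of $\GL_2(F)$-type is cyclotomic, by Grothendieck together with the standard computation of the $\o_F$-linear determinant via the $\o_F$-polarization), so $\varepsilon_A$ too runs over an explicit finite set. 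Hence the pair $(\tr{\rho_{A,p}},\det{\rho_{A,p}})$ --- equivalently, by Brauer--Nesbitt and the semisimplicity of $\rho_{A,p}\otimes_{\Z_p}\Q_p$ (Faltings), the $\GL_2(\o_F\otimes\Q_p)$-conjugacy class of $\rho_{A,p}$ --- is a function of $(a_\qfrak)_{\qfrak\in T}$ and $\varepsilon_A$, hence runs over an explicit finite set of possibilities depending only on $g$, $K$, $S$, and $p$. It follows that $R$ runs over an explicit finite set of conjugacy classes of $\Z_p$-subalgebras of $M_2(\o_F\otimes\Q_p)$; and each such $R$ is a \emph{full} $\Z_p$-order, since by Faltings' proof of the Tate conjecture (using $\End_K^0(A)=F$, which holds as $A$ is $K$-simple, and that $A$ does not admit sufficiently many complex multiplications) and the double centralizer theorem one has $\Q_p[\rho_{A,p}(\Gal(\Qbar/K))]=M_2(\o_F\otimes\Q_p)$.

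The elementary substitute for the Nakayama step is the observation that a full $\Z_p$-order $R$ in $M_2(\o_F\otimes\Q_p)$ is contained in only finitely many maximal orders: such maximal orders are the $\End_{\o_{F,p}}(\Lambda)$ for $R$-stable $\o_{F,p}$-lattices $\Lambda\subseteq (\o_F\otimes\Q_p)^{\oplus 2}$, and there are only finitely many homothety classes of these, since fixing $c$ with $p^c M_2(\o_{F,p})\subseteq R$ (possible as $R$ is full) every $R$-stable $\Lambda$ satisfies $p^c\Lambda_0\subseteq\Lambda\subseteq\Lambda_0$ where $\Lambda_0:=M_2(\o_{F,p})\cdot\Lambda$. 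Under the identification $T_p(A)\simeq\o_{F,p}^{\oplus 2}$ implicit in the statement, $\End_{\o_{F,p}}(T_p(A))=M_2(\o_{F,p})$ is one of these finitely many maximal orders, as $\rho_{A,p}(\Gal(\Qbar/K))$ stabilises $T_p(A)$ and hence $R\subseteq M_2(\o_{F,p})$. Therefore
$$[M_2(\o_{F,p}):\Z_p[\rho_{A,p}(\Gal(\Qbar/K))]]\leq\max\{[\mathcal{O}:R]:R\text{ as above},\ \mathcal{O}\supseteq R\text{ a maximal order}\},$$
a maximum over an explicit finite set, which is the desired $C_{g,K,S,p}$.

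The main obstacle is conceptual rather than computational: one must find a replacement for the step ``for $p\gg1$ the residual image contains a conjugate of $\SL_2(\F_p)$, so Nakayama finishes'' of Proposition \ref{large prime lemma}, which is simply false for small $p$. The resolution above --- apply Lemma \ref{faltings' lemma} at the prime $p$ itself, use the Weil bounds to pin $\rho_{A,p}$ down, up to $K$-isogeny, to an explicit finite list, and then bound the finitely many lattice-level refinements --- is clean, the only arithmetic input beyond what already appears in Proposition \ref{large prime lemma} being the (standard) determination of $\det{\rho_{A,p}}$ for semistable $A$. The bookkeeping of the quaternionic case, in particular at the ramified $\pfrak\mid p$ where one works inside the division algebra $D_\pfrak^\opp$ and its unique maximal order, requires care but no new idea.
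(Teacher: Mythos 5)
There is a genuine gap, and it sits exactly where you write ``hence runs over an \emph{explicit} finite set of possibilities.'' What your argument actually establishes is this: the trace tuple $(a_\qfrak)_{\qfrak\in T}$ and the finite-order part of the determinant lie in an effectively computable finite set, and by Faltings' lemma plus Brauer--Nesbitt these data determine the rational conjugacy class of $\rho_{A,p}$, hence the $\GL_2(F\otimes\Q_p)$-conjugacy class of $R:=\Z_p[\rho_{A,p}(\Gal(\Qbar/K))]$, hence the conjugation-invariant quantity $\max\{[\mathcal{O}:R]\}$. That yields \emph{finiteness} of the possible indices, i.e.\ an ineffective bound --- essentially the ``compactness argument'' the paper itself says could replace the small-prime analysis if one gave up effectivity. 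But the proposition demands an effectively computable $C_{g,K,S,p}$, and your argument supplies no way to compute, or even bound, $[\mathcal{O}:R]$ from the data you control: knowing $\tr\rho(\Frob_\qfrak)$ for $\qfrak\in T$ (and the determinant) determines the trace function and hence $R$ only \emph{abstractly}; to get a handle on the $\Z_p$-algebra generated by the matrices $\rho(\Frob_\qfrak)$ you would need traces of words in these matrices (equivalently traces at arbitrary products of Frobenius elements), which are elements of $\o_{F,p}$ subject to no Weil-type bound and not computable from the finitely many values on $T$. Concretely, nothing in your argument rules out that $R$, while rationally full, reduces into a Borel (or Cartan) modulo $\pfrak^{n}$ for an enormous, uncontrolled $n$, which is exactly the scenario in which $[M_2(\o_{F,\pfrak}):R_\pfrak]$ is huge. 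Your lattice-theoretic finiteness of maximal orders containing $R$ is fine for each fixed $R$, but the constant $c$ with $p^cM_2(\o_{F,p})\subseteq R$ depends on $R$, so the final ``maximum over an explicit finite set'' is a maximum over a set you cannot effectively enumerate.

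Ruling out this near-reducibility effectively is precisely the content the paper has to supply for small $p$: it produces scalars and a Cartan inside $R_\pfrak$ much as you do (Frobenius traces of bounded height, a regular semisimple Frobenius image whose characteristic polynomial has bounded discriminant), but then invokes Lemma \ref{no mod-p^n characters for n large} --- proved via lifting mod-$\pfrak^n$ characters to algebraic Hecke characters (Lemma \ref{CM lifting}), which in turn rests on the effective finite-flat-group-scheme computation of Lemma \ref{computability of a level past which there are only CM characters} and the bad-reduction analysis of Lemma \ref{description of the inertial representation of a GL2-type abelian variety with bad reduction} --- to force an element of $R_\pfrak$ with off-diagonal entries of bounded valuation, whence $p^nM_2(\o_{F',\pfrak'})\subseteq R_\pfrak\otimes\o_{F',\pfrak'}$ for an effectively bounded $n$. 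Your proposal omits any substitute for this step; without it the argument proves the right statement with ``effectively computable'' deleted, which is a strictly weaker (and, for the purposes of this paper, insufficient) conclusion. The peripheral points in your write-up (applying Faltings' lemma at the prime $p$ itself, the determinant computation for semistable $A$, fullness of $R$ via Faltings plus double centralizer, the uniqueness of the maximal order at ramified $\pfrak$) are all fine, but they do not close this gap.
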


Of course for $p\gg_{g,K,S} 1$ Proposition \ref{small prime lemma} follows from Proposition \ref{large prime lemma}.

The proposition will follow from the following.

\begin{lem}\label{no mod-p^n characters for n large}
Let $g\in \Z^+$. Let $K/\Q$ be a number field. Let $S$ be a finite set of places of $K$. Let $p$ be a prime of $\Z$. Let $X\in \Z^+$. Then: there is an effectively computable constant $C_{g,K,S,X}\in \Z^+$ depending only on $g$, $K$, $S$, and $X$ such that the following statement holds.

\begin{itemize}
\item Let $F/\Q$ be a number field of degree $[F:\Q]\leq g$. Let $F'/F$ be an extension of degree $[F':F]\leq 2$ with relative discriminant $|\Nm\,{\Delta_{F'/F}}|\leq X$. Let $\pfrak\subseteq \o_F$ with $\pfrak\vert (p)$ be a prime of $\o_F$. Let $\pfrak'\subseteq \o_{F'}$ with $\pfrak'\vert \pfrak$ be a prime of $\o_{F'}$. Let $\chibar, \chibar': \Gal(\Qbar/K)\to \left(\o_{F'}/\pfrak'^{2n\cdot e(\pfrak'/\pfrak)}\right)^\times$. Let $A/K$ be a split semistable $g$-dimensional abelian variety over $K$ admitting $F\inj \End_K^0(A) = \End_{\Qbar}^0(A)\in \widetilde{\mathcal{R}}_{g,K,S}^0$ which is $K$-simple, has good reduction outside $S$, does not admit sufficiently many complex multiplications over $\Qbar$, and is such that $0\to \chibar\to A[\pfrak^{2n}]\otimes_{\o_{F,\pfrak}} \o_{F',\pfrak'}\to \chibar'\to 0$.

Then: $n\leq C_{g,K,S,X}$.
\end{itemize}
\end{lem}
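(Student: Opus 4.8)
Proof proposal. The plan is to transpose the proof of Lemma~\ref{irreducibility after some point} to a tower: there ``$p$ large'' forces congruences between bounded-height algebraic numbers to be equalities, and here the analogous role is played by ``$\pfrak'^{2n\cdot e(\pfrak'/\pfrak)}$ large'', so that bounding $n$ amounts to ruling out arbitrarily long reducibility congruences. Set $m := 2n\cdot e(\pfrak'/\pfrak)$; since $e(\pfrak'/\pfrak)\leq 2$ it suffices to bound $m$. First I would normalise exactly as in the proof of Proposition~\ref{large prime lemma}: after replacing $K$ by the explicit finite extension of Lemma~\ref{everything happens over an explicit finite extension}, further enlarged à la Hermite--Minkowski so as to trivialise every finite-order character of conductor dividing an explicit power of $\prod_{\pfrak\in S}\Nm\,\pfrak$, one may assume $\chibar\cdot\chibar'$ is the reduction modulo $\pfrak'^{m}$ of the $p$-adic cyclotomic character on the nose, so that it is enough to control $\chibar$ alone. (In the quaternionic case $\End_{\Qbar}^0(A)=D$ one reduces to this situation after Morita equivalence at the primes where $D$ splits, handling the finitely many ramified primes exactly as in Proposition~\ref{the endomorphism algebra is one of an explicit finite set of possibilities} and as in the second case of Proposition~\ref{large prime lemma}.)

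The crux is the local behaviour at a prime $\qfrak\mid(p)$ of $K$. Since $A$ has split semistable --- generically good --- reduction at $\qfrak$, the restriction of $\rho_{A,\pfrak}$ to a decomposition group at $\qfrak$ is semistable (crystalline in the good-reduction case) with Hodge--Tate weights in $\{0,1\}$; the \emph{trick} that avoids an explicit Breuil--Kisin computation with the finite flat group scheme $A[\pfrak^{2n}]$ is to pass to characteristic zero instead. The preimage in $\Lambda:=T_\pfrak(A)\otimes_{\o_{F,\pfrak}}\o_{F',\pfrak'}$ of the free rank-one $\o_{F',\pfrak'}/\pfrak'^m$-submodule $\chibar\subseteq\Lambda/\pfrak'^m$ is a decomposition-group-stable $\o_{F',\pfrak'}$-lattice $\widetilde\Lambda$ with $\widetilde\Lambda\otimes\Q_p$ a one-dimensional $F_{\pfrak'}$-subrepresentation of the (semi)stable representation $\Lambda\otimes\Q_p$; as $\mathrm{Rep}_{\mathrm{st}}$ is closed under subobjects and a one-dimensional semistable representation is crystalline, $\widetilde\Lambda$ is a rank-one crystalline $\o_{F',\pfrak'}$-lattice with Hodge--Tate weights in $\{0,1\}$, and $\chibar\vert_{I_\qfrak}$ is its reduction modulo $\pfrak'^{m}$. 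Such a character is, up to an unramified twist, the restriction to $I_\qfrak$ of a product of Lubin--Tate characters, hence ``of algebraic type of bounded weight'' at $\qfrak$ --- precisely the input that in the mod-$\pfrak$ setting of Lemma~\ref{irreducibility after some point} was extracted from Raynaud's classification of finite flat group schemes. Running the remainder of that argument verbatim --- Artin reciprocity forces $\chibar$ and $\chibar'$ to be trivial on an explicit finite-index subgroup of $\o_K^\times$ whose generators have explicitly bounded height, so once $m$ exceeds an explicit constant depending only on $g$, $K$, $S$, $X$ the congruences on those generators become equalities --- and invoking the classification of algebraic Hecke characters, one obtains algebraic Hecke characters $\chi,\chi'$ over $K$ of explicitly bounded conductor and with weights in $\{0,1\}$ such that $\chi\equiv\chibar$ and $\chi'\equiv\chibar'$ modulo $\pfrak'^{m-O(1)}$.

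It remains to derive the contradiction, as in the proofs of Lemma~\ref{irreducibility after some point} and Proposition~\ref{the endomorphism algebra is one of an explicit finite set of possibilities}. Choose an auxiliary prime $\ell$ prime to $10^{10}!\,p\prod_{\pfrak\in S}\Nm\,\pfrak$, a prime $\lambda\mid(\ell)$, and the finite Faltings--Serre set $T$ furnished by Lemma~\ref{faltings' lemma}, with parameters making $T$ detect representations unramified outside $\ell$, outside $S$, and outside the explicitly bounded conductors of $\chi,\chi'$. From the exact sequence and strict compatibility, $\tr\rho_{A,\pfrak}(\Frob_\qfrak)\equiv\chi(\Frob_\qfrak)+\chi'(\Frob_\qfrak)$ modulo $\pfrak'^{m-O(1)}$ for every $\qfrak\in T$; both sides are algebraic integers of explicitly bounded height --- the left by Weil's estimate $|{\cdot}|_v\leq 2\sqrt{\Nm\,\qfrak}$, the right because $\chi,\chi'$ have bounded conductor and weights in $\{0,1\}$ --- so once $m$ exceeds a further explicit constant the congruences are equalities in $\Qbar$. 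Then Lemma~\ref{faltings' lemma} and Brauer--Nesbitt give $\rho_{A,\lambda}\otimes_{F_\lambda,\sigma}\Qbar_\ell\simeq(\tau\chi)\oplus(\tau\chi')$ for suitable embeddings $\tau\vert_F=\sigma$, so this two-dimensional $\Qbar_\ell$-representation is reducible. But $A$ is $\Qbar$-simple, admits no sufficiently many complex multiplications over $\Qbar$, and has $\End_{\Qbar}^0(A)\in\widetilde{\mathcal{R}}_{g,K,S}^0$, so by Faltings' theorem on the Tate conjecture --- exactly as in the treatment of the case $\End_K^0(B)=F$ in Proposition~\ref{the endomorphism algebra is one of an explicit finite set of possibilities} --- every two-dimensional constituent of $\rho_{A,\lambda}\otimes\Qbar_\ell$ is absolutely irreducible, a contradiction. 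Hence $m$, and therefore $n$, is at most an effectively computable constant depending only on $g$, $K$, $S$, and $X$. The main obstacle is the local step at $p$: one must verify that the rank-one crystalline character produced there agrees, modulo a controlled power of $\pfrak'$, with the local component of an algebraic Hecke character of effectively bounded conductor --- so that the global rigidity argument applies without ever computing Breuil--Kisin modules --- and that every loss of precision incurred along the way is bounded in terms of $g$, $K$, $S$, and $X$ only.
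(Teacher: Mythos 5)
Your global skeleton (trivialise the determinant by an explicit base change, lift $\chibar,\chibar'$ to algebraic Hecke characters of bounded conductor and weight, then compare Frobenius traces on a Faltings--Serre set, where a congruence modulo a huge power of $\pfrak'$ between algebraic integers of bounded height forces equality and hence reducibility, contradicting Faltings' irreducibility) is exactly the paper's route (this is the reduction of the lemma to Lemma \ref{CM lifting}). But the step you yourself identify as the crux --- producing the local ``CM-type'' character at primes above $p$ without Breuil--Kisin --- is where your argument breaks. You claim that the preimage $\widetilde\Lambda\subseteq \Lambda := T_\pfrak(A)\otimes_{\o_{F,\pfrak}}\o_{F',\pfrak'}$ of the rank-one submodule $\chibar\subseteq \Lambda/\pfrak'^m$ is a lattice with $\widetilde\Lambda\otimes\Q_p$ a one-dimensional subrepresentation. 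It is not: the preimage contains $\pfrak'^m\Lambda$, so it is a \emph{full-rank} lattice and $\widetilde\Lambda\otimes\Q_p$ is the whole two-dimensional representation. There is no characteristic-zero subcharacter to which ``semistable is closed under subobjects, one-dimensional semistable is crystalline'' could apply; indeed, if such a crystalline subcharacter existed one would essentially be assuming what must be proved, since the entire difficulty of the lemma is that reducibility modulo $\pfrak'^m$ need not lift to characteristic zero (globally it cannot, as $\rho_{A,\pfrak}$ is irreducible). So the ``trick'' as stated does not produce the bounded-weight inertial evaluation of $\chibar$, and the rest of the argument has no input at the primes above $p$.

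For comparison, the paper's actual resolution of this point is different in kind: at good primes it works with the finite flat group scheme over $\o_{K,\qfrak}$ prolonging the mod-$p^n$ character and proves (Lemma \ref{computability of a level past which there are only CM characters}) that past an \emph{effectively computable} level $n$ every such character is congruent mod $p^n$ to a product of embeddings. The effectivity there comes not from an a priori estimate but from a search-plus-compactness argument: the set of ``bad'' mod-$p^n$ finite-flat characters is computable for each $n$, these sets are nested, and their intersection is empty because a compatible tower would glue to the character of a $p$-divisible group, which is crystalline and hence of CM type by Tate; one then simply runs the algorithm until the set empties. At the semistable bad primes the paper instead uses Raynaud uniformization (Lemma \ref{description of the inertial representation of a GL2-type abelian variety with bad reduction}, combined with Lemma \ref{recovering characters from a direct sum}) to pin down $\chibar\vert_{I_\qfrak}$ --- a case your sketch does not really address, since split semistable bad reduction is allowed. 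To repair your proposal you would need to replace the preimage-lattice step with an argument of this sort (or an honest integral $p$-adic Hodge theory computation with ramified coefficients), keeping track that the loss of precision is bounded in terms of $g$, $K$, $S$, $X$ only.
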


Again for $p\gg_{g,K,S} 1$ this follows from Proposition \ref{large prime lemma}. 

To prove Lemma \ref{no mod-p^n characters for n large} we will essentially repeat verbatim our implicit proof of Lemma \ref{irreducibility after some point} via our citation of our \cite{my-thesis, my-first-effective-mordell-paper} --- we will lift the semisimplification to a direct sum of algebraic Hecke characters of bounded conductor (and weight) and then compare Frobenius traces at a Faltings-Serre set at a suitable reference prime (in this case just $\pfrak$), where a congruence mod $\pfrak^n$ for $n\gg_{g,K,S} 1$ would imply equality and thus reducibility of the $\pfrak$-adic representation, a contradiction.

However to repeat said proof we need to be able to lift a mod-$\pfrak^n$ character to an algebraic Hecke character when $n\gg_{g,K,S} 1$. That we can do this will be guaranteed by the following.

\begin{lem}\label{CM lifting}
Let $g\in \Z^+$. Let $K/\Q$ be a number field. Let $S$ be a finite set of places of $K$. Let $p$ be a prime of $\Z$. Let $X\in \Z^+$. Then: there is an effectively computable constant $C_{g,K,S,X}\in \Z^+$ depending only on $g$, $K$, $S$, and $X$ such that the following statement holds.

\begin{itemize}
\item Let $n\geq C_{g,K,S,X}$. Let $F/\Q$ be a number field of degree $[F:\Q]\leq g$. Let $F'/F$ be an extension of degree $[F':F]\leq 2$ with relative discriminant $|\Nm\,{\Delta_{F'/F}}|\leq X$. Let $\pfrak\subseteq \o_F$ with $\pfrak\vert (p)$ be a prime of $\o_F$. Let $\pfrak'\subseteq \o_{F'}$ with $\pfrak'\vert \pfrak$ be a prime of $\o_{F'}$. Let $\chibar: \Gal(\Qbar/K)\to \left(\o_{F'}/\pfrak'^{2n\cdot e(\pfrak'/\pfrak)}\right)^\times$. Let $A/K$ be a split semistable $g$-dimensional abelian variety over $K$ admitting $F\inj \End_K^0(A) = \End_{\Qbar}^0(A)\in \widetilde{\mathcal{R}}_{g,K,S}^0$ which is $K$-simple, has good reduction outside $S$, does not admit sufficiently many complex multiplications over $\Qbar$, and is such that $\chibar\inj A[\pfrak^{2n}]\otimes_{\o_{F,\pfrak}} \o_{F',\pfrak'}$.

Then: there is an algebraic Hecke character $\chi$ of $K$ of weight $\leq 1$ and conductor dividing $\prod_{\qfrak\in S} (\Nm\,{\qfrak})^{10^{10g}\cdot [K:\Q]}$ valued in an extension $F''/F'$ and a prime $\pfrak''\subseteq \o_{F''}$ with $\pfrak''\vert \pfrak'$ such that $\chi\equiv \chibar\pmod*{\pfrak''^{2n\cdot e(\pfrak''/\pfrak)}}$, where we have also written $\chi$ for the $\pfrak''$-adic character corresponding to said algebraic Hecke character.
\end{itemize}
\end{lem}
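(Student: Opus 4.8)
The plan is to transpose the argument sketched for Lemma~\ref{irreducibility after some point}, now taking $\pfrak$ itself as the reference prime and letting the exponent $n$ (in place of the size of $p$) supply the rigidity. Write $\mathfrak{C} := \prod_{\qfrak\in S}(\Nm\,\qfrak)^{10^{10g}\cdot[K:\Q]}$. The numerical heart of the matter is that the ideal $\pfrak'^{\,2n\cdot e(\pfrak'/\pfrak)}$ of $\o_{F'}$ contains $p^{\floor{2n/g}}$ (since $e(\pfrak/\Q)\leq g$), so a congruence modulo $\pfrak'^{\,2n\cdot e(\pfrak'/\pfrak)}$ between two algebraic integers of height $\leq H$ lying in a number field $L$ of degree $\ll_{g,K}1$ is forced to be an equality whenever $p^{\floor{2n/g}}>(2H)^{[L:\Q]}$; since $p\geq2$ this holds as soon as $n\geq C_{g,K,S,X}$, with the threshold independent of $p$ --- this is the source of the uniformity in $p$.

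First I would fix the local structure of $\chibar$. Since $A/K$ is split semistable with good reduction outside $S$ and $\End^0_{\Qbar}(A)\in\widetilde{\mathcal{R}}_{g,K,S}^0$, the $\o_{F,\pfrak}$-module $A[\pfrak^{2n}]$ is unramified outside $Sp$, is the $\pfrak^{2n}$-torsion of a semistable $\o_{F,\pfrak}$-divisible group, and has determinant a fixed power of $\chi_p$ times a finite-order character of conductor dividing $\mathfrak{C}$. Consequently: at each finite $\qfrak\in S$ with $\qfrak\nmid p$, semistability makes inertia act unipotently, so the rank-one subobject $\chibar$ is at worst tamely ramified there, with conductor dividing $\mathfrak{C}$; and at each $\qfrak\mid p$ the finite flat group scheme over $\o_{K,\qfrak}$ cut out by $\chibar$ is, by the classification of finite flat group schemes, the $\pfrak^{2n}$-torsion of a CM (height-one) $\o_{F,\pfrak}$-divisible group up to an unramified twist, so that $\chibar|_{I_\qfrak}$ is \emph{exactly} the reduction modulo $\pfrak'^{\,2n\cdot e(\pfrak'/\pfrak)}$ of a product of fundamental characters $u\mapsto\prod_{\sigma\in\Phi_\qfrak}\sigma(u)$, the multiplicity-freeness coming from the cyclotomic determinant exactly as in the proof of Lemma~\ref{irreducibility after some point}. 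Applying the same to the complementary quotient, the two infinity types add up to the norm, so the eventual infinity type has weight $\leq1$.

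Next comes the global comparison. Via class field theory $\chibar$ is a character of $\A_K^\times/K^\times$, unramified at each finite place outside $S$ not above $p$, of prime-to-$p$ conductor dividing $\mathfrak{C}$, and with the behaviour above on $\prod_{\qfrak\mid p}\o_{K,\qfrak}^\times$. Fix once and for all a finite generating set of the finite-index subgroup $U\subseteq\o_K^\times$ of totally positive units $\equiv1\pmod*{\mathfrak{C}}$ --- a set of bounded height depending only on $g$, $K$, $S$, $X$. Triviality of $\chibar$ on $K^\times$, combined with the local description, forces for each such generator $\epsilon$ the fixed algebraic integer $\alpha_\epsilon:=\prod_{\qfrak\mid p}\prod_{\sigma\in\Phi_\qfrak}\sigma(\epsilon)$, which lies in the compositum of $F'$ with a Galois closure of $K$, to be $\equiv1$ modulo $\pfrak'^{\,2n\cdot e(\pfrak'/\pfrak)}$; by the rigidity observation this is an equality once $n\geq C_{g,K,S,X}$, and multiplicativity then shows that the associated infinity type $\eta$, of weight $\leq1$, is trivial on $U$. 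Weil's classification of algebraic Hecke characters now produces an algebraic Hecke character $\chi_0$ of $K$, valued in a bounded extension $F''/F'$, with infinity type $\eta$ and conductor supported above $p$ (it is the Hecke character attached to the CM structure cut out by $\chibar$ at the primes above $p$). Choosing $\pfrak''\mid\pfrak'$ in $\o_{F''}$, the $\pfrak''$-adic avatar of $\chi_0$ agrees with $\chibar$ on each $\o_{K,\qfrak}^\times$ ($\qfrak\mid p$) modulo $\pfrak''^{\,2n\cdot e(\pfrak''/\pfrak)}$, so $\chibar\cdot\overline{\chi_0}^{-1}$ is a finite-order character that is unramified at $p$ and of conductor dividing $\mathfrak{C}$, hence factors through a fixed ray class group; after enlarging $F''$ to contain a bounded set of roots of unity it is the reduction modulo a suitable power of $\pfrak''$ of a finite-order Hecke character $\phi$ of conductor dividing $\mathfrak{C}$, and $\chi:=\chi_0\cdot\phi$ is the desired algebraic Hecke character.

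The main obstacle is the local-at-$p$ step: identifying $\chibar|_{I_\qfrak}$ as the reduction of a product of fundamental characters requires the classification of finite flat group schemes over the possibly wildly ramified base $\o_{K,\qfrak}$. When $e(\qfrak/\Q)<p-1$ --- in particular for all $p>[K:\Q]+1$, hence for all but finitely many $p$ --- this is Raynaud's theory; for the finitely many remaining residue characteristics one would a priori invoke the Breuil--Kisin classification, and it is exactly here that the author substitutes a trick (which is also what lets the constant $C_{g,K,S,X}$ be taken independent of $p$). Everything else --- the Hermite--Minkowski and Minkowski bookkeeping for $L$, $F''$, and the auxiliary characters, the height and conductor estimates, and Weil's classification --- is routine and effective.
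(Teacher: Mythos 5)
Your global half of the argument (unramified-outside-$Sp$ bookkeeping, triviality on a bounded-height generating set of a finite-index unit subgroup, forcing congruences $\bmod\ \pfrak'^{\,2n\cdot e(\pfrak'/\pfrak)}$ into equalities once $n\gg 1$, and then Weil's classification plus a finite-order twist of conductor dividing $\prod_{\qfrak\in S}(\Nm\,\qfrak)^{10^{10g}[K:\Q]}$) does match the paper, whose proof is literally ``rerun Lemma $3.9$ of \cite{my-first-effective-mordell-paper} with $p^n\gg_{g,K,S}1$ in place of $p\gg_{g,K,S}1$''. But the step you yourself label ``the main obstacle'' is the entire content of the lemma, and you do not prove it. Since for $p\gg_{g,K,S}1$ the statement is already vacuous-or-known via Proposition \ref{large prime lemma} and Lemma \ref{irreducibility after some point}, the only case at issue is the finitely many small $p$, where $e(\qfrak/p)$ can exceed $p-1$; there your assertion that $\chibar\vert_{I_\qfrak}$ is \emph{exactly} the reduction modulo $\pfrak'^{\,2n\cdot e(\pfrak'/\pfrak)}$ of a product of fundamental characters ``by the classification of finite flat group schemes'' is unsupported: Raynaud's theory is unavailable, a flat character mod a high power of $p$ is not literally the torsion of a CM $p$-divisible group (the claim is false for small $n$, so some quantitative input is mandatory), and producing a Barsotti--Tate lift \emph{with $\o_{F',\pfrak'}$-structure} over a wildly ramified base --- in particular deforming the $\o_{F',\pfrak'}$-action when $F'$ is ramified above $p$ --- is exactly the technical obstruction the paper highlights before introducing its workaround. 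Deferring this to ``a priori invoke the Breuil--Kisin classification'' or to ``the author's trick'' leaves the proof with a genuine gap at its crux.

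The paper's actual mechanism is Lemma \ref{computability of a level past which there are only CM characters}: for fixed $p$ and $\qfrak\vert(p)$, the set $\Phi_n'$ of characters valued in $(\o_{F',\pfrak'}/p^n)^\times$ that are flat (arise from a finite flat group scheme over $\o_{K,\qfrak}$) but whose inertial restriction is \emph{not} congruent mod $p^n$ to a product of embeddings is effectively computable and compatible under reduction; any tower of such flat characters glues to a character of a $p$-divisible group, whose inertial restriction is crystalline and hence a product of embeddings by Tate, so $\Phi_n'=\emptyset$ for $n$ large, and one \emph{computes} the threshold by running the algorithm until emptiness. This sidesteps both Breuil--Kisin and the CM-lifting/deformation problem entirely; the resulting constant depends on $p$, which is harmless because only $p\ll_{g,K,S}1$ is relevant (this, not the trick itself, is why $C_{g,K,S,X}$ can be taken independent of $p$). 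Two smaller corrections: at primes $\qfrak\vert(p)$ lying in $S$ there is no finite flat prolongation at all, and the paper instead evaluates $\chibar\vert_{I_\qfrak}$ via the split toric uniformization (Lemma \ref{description of the inertial representation of a GL2-type abelian variety with bad reduction}), a case your sketch silently assumes away; and even in the range $e(\qfrak/p)<p-1$ one needs an argument at level $p^n$, not merely the killed-by-$p$ statement of Raynaud that suffices in the large-$p$ setting.
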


In the case of $p\gg_{g,K,S} 1$ this was proved by an argument involving class field theory and height bounds on global units generating $\o_K^\times$ to reduce to the standard classification of algebraic Hecke characters, all of which are still available when $p\ll_{g,K,S} 1$. What breaks down, however, is that to evaluate the restrictions of $\chibar$ to the various inertia groups above $p$ we used Raynaud's classification of finite flat group schemes which prolong over an unramified base (or alternatively Fontaine-Laffaille's similar classification).

It would be natural then to turn to the Breuil-Kisin classification (let us emphasize that we have a finite flat group scheme associated to a character mod $p^n$ with large $n$ when $p\ll_{g,K,S} 1$ --- of course the statement is false for e.g.\ $n = 1$).

But we will not need to thanks to a trick.

\subsubsection{Lemma \ref{no mod-p^n characters for n large} implies Proposition \ref{small prime lemma}.}

We will first explain how to prove Proposition \ref{small prime lemma} assuming Lemma \ref{no mod-p^n characters for n large}.

Let us give a quick sketch in the case of $F\simeq \End_K^0(A)$ before we begin the proof. We start with the $\Z_p$-algebra generated by the image of $\Gal(\Qbar/K)$ acting on $T_p(A)$. We will show it is sufficiently close to an $\o_{F,p}$-algebra by considering Frobenius traces (which are in said $\Z_p$-algebra by the same trick we used in Proposition \ref{large prime lemma}). This lets us deal with the representations $T_\pfrak(A)$ one by one.

We have shown, so to speak, that our algebra basically contains $\o_{F,\pfrak}\cdot \id$. To get another "dimension", we consider the actual images (not just traces) of Frobenius elements, again at a Faltings-Serre set --- said images generate the algebra by Nakayama, so there must certainly be one which does not act by a scalar ($A/K$ is not CM). Again the $\o_{F,\pfrak}$-algebra generated by that Frobenius image is basically the maximal order in the $F_\pfrak$-algebra generated by the Frobenius image (since its characteristic polynomial has bounded discriminant).

So we have gotten our algebra to contain a second dimension --- a Cartan, which for simplicity let us say is the split Cartan. Finally we apply Lemma \ref{no mod-p^n characters for n large} to show that our algebra does not reduce to a Cartan (or even into a Borel) mod $\pfrak^n$ for $n$ large --- thus in this simplified picture it contains a third element with top right corner of bounded valuation, and applying the lemma again without loss of generality said element also has bottom left corner of bounded valuation. Together the elements we have produced span a subalgebra of $M_2(\o_{F,\pfrak})$ of bounded index, as desired.

\begin{proof}[Proof of Proposition \ref{small prime lemma} assuming Lemma \ref{no mod-p^n characters for n large}.]
Again for $p\gg_{g,K,S} 1$ there is nothing to do thanks to Proposition \ref{large prime lemma}. Therefore without loss of generality $p\ll_{g,K,S} 1$.

We repeat verbatim the beginning of the proof of Proposition \ref{large prime lemma}. In the same notation, we conclude that $\Z_p[\rho_{A,p}(\Gal(\Qbar/K))]\supseteq \Z[\{\tr(\Frob_\qfrak) : \qfrak\in T\}]\otimes_\Z \Z_p$. Again we observe that $\Z[\{\tr(\Frob_\qfrak) : \qfrak\in T\}]\subseteq \o_F$ is an order of index $\ll_{g,K,S} 1$. It therefore follows that the index of $\Z_p[\rho_{A,p}(\Gal(\Qbar/K))]$ inside $\o_{F,p}[\rho_{A,p}(\Gal(\Qbar/K))] = \bigoplus_{\pfrak\vert (p)} \o_{F,\pfrak}[\rho_{A,\pfrak}(\Gal(\Qbar/K))]$ is $\ll_{g,K,S} 1$.

So it suffices to show that, for $\pfrak\subseteq \o_F$ a prime of $\o_F$ with $\pfrak\vert (p)$, the index of $\o_{F,\pfrak}[\rho_{A,\pfrak}(\Gal(\Qbar/K))]$ inside $M_2(\o_{F,\pfrak})$ is $\ll_{g,K,S} 1$.

Now, by the second part of Lemma \ref{faltings' lemma} (aka Nakayama), we have that $\o_{F,\pfrak}[\{\rho_{A,\pfrak}(\Frob_\qfrak) : \qfrak\in T\}] = \o_{F,\pfrak}[\rho_{A,\pfrak}(\Gal(\Qbar/K))]$.

Let us note before splitting into cases that we may and will expand $T$ so that for each $D\in \widetilde{\mathcal{R}}_{g,K,S}^0$ there is a $\qfrak\in T$ which is of degree $1$ in $K$ and which lies over an unramified prime of the centre of $D$.

Now we bifurcate. Let us first treat the case where $F\simeq \End_K^0(A) = \End_{\Qbar}^0(K)$.

Let $\mfrak\in T$ be such that $\Nm\,{\mfrak}\in \Z$ is prime and unramified in $F$. Let $f_\mfrak(t)\in \o_F[t]$ be the characteristic polynomial of $\rho_{A,\pfrak}(\Frob_\mfrak)$. Since $\det{(\rho_{A,\pfrak}(\mfrak))} = \Nm\,{\mfrak}$ (since our quoting of our argument beginning the proof of Proposition \ref{large prime lemma} included arranging that $\rho_{A,\pfrak}$ has cyclotomic determinant), it follows that the monic quadratic $f_\mfrak(t)$ has nonzero discriminant (else $\Nm\,{\mfrak}$ would be a square in $F$ and thus a prime of ramification of $F$).

Moreover the absolute norm of said discriminant is of course $\ll_{g,K,S} 1$, whence the index of $\o_F[t]/(f_\mfrak)$ inside the maximal order of the quadratic \'{e}tale algebra $F[t]/(f_\mfrak)$ is also $\ll_{g,K,S} 1$. Thus similarly the index of $\o_{F,\pfrak}[\rho_{A,\pfrak}(\Frob_\mfrak)]$ inside the (monogenic) maximal order of the quadratic \'{e}tale algebra $F_\pfrak[\rho_{A,\pfrak}(\Frob_\mfrak)]\simeq F_\pfrak[t]/(f_\mfrak)$ is $\ll_{g,K,S} 1$.

Therefore there is an $F'/F$ of degree $[F':F]\leq 2$ and with relative discriminant $|\Nm\,{\Delta_{F'/F}}|\ll_{g,K,S} 1$ such that there is a prime $\pfrak'\subseteq \o_{F'}$ with $\pfrak'\vert \pfrak$ for which the quadratic \'{e}tale algebra $F'_{\pfrak'}[\rho_{A,\pfrak}(\Frob_\mfrak)]\simeq F'_{\pfrak'}[t]/(f_\mfrak)$ is split.

Moreover it suffices to show that $\o_{F',\pfrak'}[\rho_{A,\pfrak}(\Gal(\Qbar/K))]\subseteq M_2(\o_{F',\pfrak'})$ has index $\ll_{g,K,S} 1$ (we have just applied ${-}\otimes_{\o_{F,\pfrak}} \o_{F',\pfrak'}$ to our previous cokernel).

By diagonalizing the regular semisimple element $\rho_{A,\pfrak}(\Frob_\mfrak)\in M_2(\o_{F',\pfrak'})$ we find that, in said basis, $\twobytwo{\o_{F',\pfrak'}}{0}{0}{\o_{F',\pfrak'}}\subseteq \o_{F',\pfrak'}[\rho_{A,\pfrak}(\Gal(\Qbar/K))]$.

By Lemma \ref{no mod-p^n characters for n large} it follows that, in this basis, there is an $\alpha\in \o_{F',\pfrak'}[\rho_{A,\pfrak}(\Gal(\Qbar/K))]$ such that $\alpha$ is not diagonal modulo $\pfrak'^n$ for an $n\ll_{g,K,S} 1$.

Applying Lemma \ref{no mod-p^n characters for n large} again if necessary (to produce an $\alpha'$ and then replacing $\alpha$ by $\alpha + \alpha'$ if necessary), we see that without loss of generality the $\pfrak$-adic valuations of the top-left and bottom-right corners of $\alpha$ are $\ll_{g,K,S} 1$.

It follows that there is an $n\ll_{g,K,S} 1$ such that $p^n\cdot M_2(\o_{F',\pfrak'})\subseteq \o_{F',\pfrak'}[\rho_{A,\pfrak}(\Gal(\Qbar/K))]$, and we are done.

Now for the case of $\End_K^0(A) = D$ a quaternion algebra over $F$. But the above argument also works verbatim --- so long as we replace $M_2(\o_{F,\pfrak})$ and $M_2(F_\pfrak)$ by $\o_\pfrak^\opp := \o^\opp\otimes_{\o_F} \o_{F,\pfrak}$ and $D_\pfrak^\opp := D^\opp\otimes_F F_\pfrak$, respectively ---  because $F'/F$ also splits $D/F$.
\end{proof}

\subsubsection{Lemma \ref{CM lifting} implies Lemma \ref{no mod-p^n characters for n large}.}

Let us next prove Lemma \ref{no mod-p^n characters for n large} assuming Lemma \ref{CM lifting}.

\begin{proof}[Proof of Lemma \ref{no mod-p^n characters for n large} assuming Lemma \ref{CM lifting}.]
Let, via Lemma \ref{CM lifting}, $\chi$ and $\chi'$ be algebraic Hecke characters of weight $\leq 1$ of conductor dividing $\Delta_N := \prod_{\qfrak\in S} (\Nm\,{\qfrak})^{10^{10g}\cdot [K:\Q]}$ and valued in an extension $F''/F'$ with a prime $\pfrak''\vert \pfrak'$ such that $\chi\equiv \chibar\pmod*{\pfrak''^{2n\cdot e(\pfrak''/\pfrak)}}$ and $\chi'\equiv \chibar'\pmod*{\pfrak''^{2n\cdot e(\pfrak''/\pfrak)}}$, where we have also written $\chi$ and $\chi'$ for the corresponding $\pfrak''$-adic characters.

Let $T$ be the finite set of primes produced by Lemma \ref{faltings' lemma} with parameters $(2,K,\Delta_N,p^{10^{10 g}})$.

Because $\tr{(\rho_{A,\pfrak}(\Frob_\mfrak))}\equiv \chi(\mfrak) + \chi(\mfrak')\pmod*{\pfrak''^{2n\cdot e(\pfrak''/\pfrak)}}$ for all $\mfrak\in T$ and because both sides are algebraic integers of height and degree $\ll_{g,K,S} 1$, it follows that if $n\gg_{g,K,S} 1$ were explicitly sufficiently large said congruence would be an equality for all $\mfrak\in T$, whence by Lemma \ref{faltings' lemma} $\rho_{A,\pfrak}$ would have semisimplification $\chi\oplus \chi'$ and would in particular be reducible, a contradiction.
\end{proof}

\subsubsection{Preliminaries for the proof of Lemma \ref{CM lifting}.}

To prove Lemma \ref{CM lifting} we will need a few preliminary lemmas. First we note the following easy fact.

\begin{lem}\label{recovering characters from a direct sum}
Let $G$ be a group. Let $F/\Q$ be a number field. Let $\pfrak\subseteq \o_F$ be a prime of $\o_F$. Let $n\in \Z^+$. Let $\chi_1,\ldots,\chi_4: G\to (\o_F/\pfrak^{2n})^\times$ be such that $\chi_1 + \chi_2 = \chi_3 + \chi_4$ and $\chi_1\cdot \chi_2 = \chi_3\cdot \chi_4$ as functions on $G$. Then: $\{\chi_1, \chi_2\}\equiv \{\chi_3, \chi_4\}\pmod*{\pfrak^n}$.
\end{lem}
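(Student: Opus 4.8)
The plan is to reduce the statement to a pointwise assertion about the roots of a monic quadratic over the (non-reduced!) ring $\o_F/\pfrak^{2n}$, and then to globalize by using that $\chi_1,\dots,\chi_4$ are homomorphisms. I would not expect a serious obstacle: the only subtlety is that $\o_F/\pfrak^{2n}$ is not reduced, so one cannot simply "cancel" to conclude that two factorizations of the same quadratic have equal roots — some $\pfrak$-adic valuation bookkeeping has to replace that step.

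First, fix $g\in G$ and abbreviate $a:=\chi_1(g)$, $b:=\chi_2(g)$, $c:=\chi_3(g)$, $d:=\chi_4(g)$, all in $(\o_F/\pfrak^{2n})^\times$; the hypotheses give $a+b=c+d$ and $ab=cd$. Lift these to elements $\tilde a,\tilde b,\tilde c,\tilde d$ of the complete DVR $\o_{F,\pfrak}$. Then
$$(\tilde c-\tilde a)(\tilde c-\tilde b)=\tilde c^2-\tilde c(\tilde a+\tilde b)+\tilde a\tilde b\equiv \tilde c^2-\tilde c(\tilde c+\tilde d)+\tilde c\tilde d=0\pmod*{\pfrak^{2n}},$$
so $v_\pfrak(\tilde c-\tilde a)+v_\pfrak(\tilde c-\tilde b)\geq 2n$ and hence at least one of these two valuations is $\geq n$. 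If $v_\pfrak(\tilde c-\tilde a)\geq n$, i.e.\ $c\equiv a\pmod*{\pfrak^n}$, then from $\tilde d-\tilde b\equiv \tilde a-\tilde c\pmod*{\pfrak^{2n}}$ we also get $d\equiv b\pmod*{\pfrak^n}$; in the other case, by the symmetry of both relations in $a$ and $b$, we get instead $c\equiv b$ and $d\equiv a\pmod*{\pfrak^n}$. Either way $\{a,b\}\equiv\{c,d\}\pmod*{\pfrak^n}$ as unordered pairs.

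To promote this pointwise statement to the asserted identity of unordered pairs of functions on $G$, I would set
$$G_1:=\{g\in G:\chi_1(g)\equiv\chi_3(g),\ \chi_2(g)\equiv\chi_4(g)\pmod*{\pfrak^n}\},\quad G_2:=\{g\in G:\chi_1(g)\equiv\chi_4(g),\ \chi_2(g)\equiv\chi_3(g)\pmod*{\pfrak^n}\}.$$
Reducing the $\chi_i$ modulo $\pfrak^n$ yields homomorphisms $\overline{\chi}_i:G\to(\o_F/\pfrak^n)^\times$, so $G_1=\ker(\overline{\chi}_1\overline{\chi}_3^{-1})\cap\ker(\overline{\chi}_2\overline{\chi}_4^{-1})$ and likewise $G_2$ are subgroups of $G$. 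The previous paragraph shows $G=G_1\cup G_2$, and since a group is never the union of two proper subgroups we must have $G=G_1$ or $G=G_2$ — which is precisely $\{\chi_1,\chi_2\}\equiv\{\chi_3,\chi_4\}\pmod*{\pfrak^n}$.

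So the only "hard" part is really the harmless-looking globalization: one needs the homomorphism property of the $\chi_i$ (used through the union-of-two-proper-subgroups fact) to pin down a single consistent pairing across all of $G$; the rest is the valuation estimate above, which is exactly the non-reduced replacement for the naive cancellation argument.
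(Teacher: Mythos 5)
Your proof is correct and is essentially the paper's argument: the same Vieta-style evaluation producing a product of two differences that vanishes mod $\pfrak^{2n}$ (hence one factor is $\equiv 0\pmod*{\pfrak^n}$), followed by the same globalization via the fact that a group is never the union of two proper subgroups. The only deviations are cosmetic --- the paper first normalizes $\chi_1 = \triv$ and takes the subgroups to be $\ker(\chi_3\bmod \pfrak^n)$ and $\ker(\chi_4 \bmod \pfrak^n)$, recovering the second congruence from the additive relation at the end, whereas you skip the normalization and fold both congruences into your $G_1$, $G_2$.
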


\begin{proof}
By replacing each $\chi_i$ by $\chi_i\cdot \chi_1^{-1}$ it suffices to treat the case where $\chi_1 = \triv$. Thus $(t - \chi_3(g))\cdot (t - \chi_4(g)) = (t - 1)\cdot (t - \chi_2(g))\in (\o_F/\pfrak^{2n})[t]$ for all $g\in G$, and so taking $t = 1$ we conclude that $(1 - \chi_3(g))\cdot (1 - \chi_4(g)) = 0$ for all $g\in G$, which is to say that, for all $g\in G$, either $\chi_3(g)\equiv 1\pmod*{\pfrak^n}$ or $\chi_4(g)\equiv 1\pmod*{\pfrak^n}$.

Let $\psi_i := \chi_i\pmod*{\pfrak^n}$, whence $\psi_i: G\to (\o_F/\pfrak^n)^\times$. Let $H_i := \ker{\psi_i}$. Thus we have found that $G = H_3\cup H_4$. So either $H_3 = G$, or else, letting $g\in G - H_3$ (and thus $g\in H_4$), we see that $g\cdot H_3\subseteq H_4$ and so $H_3\subseteq H_4$, whence $H_4 = G$.

So we conclude that either $\psi_3 = \triv$ or $\psi_4 = \triv$. Subtracting from $\triv + \psi_2 = \psi_3 + \psi_4$ we find that $\{\triv, \psi_2\} = \{\psi_3, \psi_4\}$, as desired.
\end{proof}

Next we will evaluate the inertial restriction of a relevant character at a prime of bad reduction of the given $A/K$.

\begin{lem}\label{description of the inertial representation of a GL2-type abelian variety with bad reduction}
Let $K/\Q$ be a number field. Let $\qfrak\subseteq \o_K$ be a prime of $K$. Let $p\in \Z^+$ be the prime of $\Z$ with $\qfrak\vert (p)$. Let $A/K_\qfrak$ be an abelian variety over $K_\qfrak$ with split semistable bad reduction at $\qfrak$ which is of $\GL_2$-type over $K_\qfrak$. Let $\o$ be an order in a CM field $F/\Q$ admitting $\o\inj \End_{K_\qfrak}(A)$ and such that $[F:\Q] = \dim{A}$. Let $\pfrak\subseteq \o$ with $\pfrak\vert (p)$ be a prime of $\o$ above $p$. Write $\rho_{A,\pfrak}: \Gal(\Qbar/K)\to \GL_2(\o_\pfrak)$ for the $2$-dimensional representation corresponding to the $\o[\Gal(\Qbar/K)]$-module structure on $T_\pfrak(A)$.

Then: $0\to \chi_p\to \rho_{A,\pfrak}\vert_{I_\qfrak}\to \triv\to 0$ as representations of the inertia group $I_\qfrak\subseteq \Gal(\Qbar_p/K_\qfrak)$.

In particular if $F'/F$ is an extension of degree $[F':F]\leq 2$, $\pfrak'\subseteq \o_{F'}$ with $\pfrak'\vert \pfrak$ is a prime of $\o_{F'}$ above $\pfrak$, and $\chibar, \chibar': \Gal(\Qbar_p/K_\qfrak)\to (\o_{F'}/\pfrak'^{2n\cdot e(\pfrak'/\pfrak)})^\times$ are characters such that $0\to \chibar\to A[\pfrak^{2n}]\otimes_{\o_\pfrak} \o_{F',\pfrak'}\to \chibar'\to 0$, then $\chibar\equiv \chi_p\pmod*{\pfrak'^{n\cdot e(\pfrak'/\pfrak)}}$ and $\chibar'\equiv \triv\pmod*{\pfrak'^{n\cdot e(\pfrak'/\pfrak)}}$.
\end{lem}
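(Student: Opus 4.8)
The plan is to recognize the hypotheses as forcing $A/K_\qfrak$ to have totally split multiplicative reduction at $\qfrak$, read off $T_\pfrak(A)$ from the rigid-analytic uniformization, and then deduce the mod-$\pfrak^{2n}$ assertion by a short computation.

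First I would note that the toric part $T$ of the identity component of the special fibre of the N\'{e}ron model of $A$ is stable under the action of $\o$ (which extends to the N\'{e}ron model by the N\'{e}ron mapping property), so $X_*(T)\otimes_\Z\Q$ is an $F$-module and $\dim T$ is a multiple of $[F:\Q]=\dim{A}$; as it is positive (bad reduction) and $\leq\dim{A}$, we get $\dim T=\dim{A}$, i.e.\ $A/K_\qfrak$ has totally multiplicative reduction, split because $A/K_\qfrak$ is split semistable. Hence by the non-archimedean uniformization of totally degenerate abelian varieties (Tate, Mumford, Raynaud) $A_{K_\qfrak}$ is the rigid-analytic quotient of a split torus $\G_m^{\dim{A}}$ by a Galois-trivial lattice of rank $\dim{A}$, functorially in $\End_{K_\qfrak}(A)\supseteq\o$. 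Passing to $p$-power torsion this gives the connected-\'{e}tale sequence $0\to\mu_{p^\infty}^{\dim{A}}\to A[p^\infty]\to(\Q_p/\Z_p)^{\dim{A}}\to 0$ of $p$-divisible groups over $\o_{K_\qfrak}$, which is $\o$-equivariant, hence on Tate modules an exact sequence $0\to\Z_p(1)^{\dim{A}}\to T_p(A)\to\Z_p^{\dim{A}}\to 0$ of $\o_p[\Gal(\Qbar_p/K_\qfrak)]$-modules whose sub carries the $\Gal$-action $\chi_p$ and whose quotient carries the trivial action (here using "split"). Localizing at $\pfrak$ --- applying $-\otimes_{\o_p}\o_\pfrak$, using that $\o_\pfrak$ is a discrete valuation ring and $T_\pfrak(A)$ is $\o_\pfrak$-free of rank $2$, and replacing the image of the sub by its saturation in $T_\pfrak(A)$ --- one obtains the asserted exact sequence $0\to\chi_p\to\rho_{A,\pfrak}|_{I_\qfrak}\to\triv\to 0$ of $\o_\pfrak[I_\qfrak]$-modules free of ranks $1,2,1$.

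For the "in particular" I would reduce this exact sequence modulo $\pfrak^{2n}$ (the sub stays injective, the quotient being $\o_\pfrak$-flat) and base-change along $\o_\pfrak\to\o_{F',\pfrak'}$, obtaining $0\to\chi_p\to M\to\triv\to 0$ over $R:=\o_{F',\pfrak'}/\pfrak'^{2n\cdot e(\pfrak'/\pfrak)}$ with $M:=A[\pfrak^{2n}]\otimes_{\o_\pfrak}\o_{F',\pfrak'}\iso R^2$. Choose an $R$-basis $e_1,e_2$ with $Re_1$ the $\chi_p$-line, so that $\sigma e_2=\beta(\sigma)e_1+e_2$ for a cocycle $\beta\colon\Gal(\Qbar_p/K_\qfrak)\to R(\chi_p)$ whose class is the image of the Tate period $q$. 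The key point is that $q$ has nontrivial valuation (nondegeneracy of Grothendieck's monodromy pairing), so $K_\qfrak(q^{1/p^\infty})/K_\qfrak(\mu_{p^\infty})$ is infinitely ramified and there is $\sigma_0\in\Gal(\Qbar_p/K_\qfrak)$ with $\chi_p(\sigma_0)=1$ and $\beta(\sigma_0)\in R^\times$. Now write the given $\chibar$-line (automatically a free rank-one direct summand, since the quotient $R(\chibar')$ is free of rank one) as $Rv$ with $v=ae_1+be_2$; comparing $\sigma v$ with $\chibar(\sigma)v$ gives $(\chibar(\sigma)-1)b=0$ and $a(\chibar(\sigma)-\chi_p(\sigma))=b\beta(\sigma)$ for all $\sigma$, and at $\sigma_0$ these force $b^2\beta(\sigma_0)=0$, hence $b\in\pfrak'^{n\cdot e(\pfrak'/\pfrak)}R$, so $a\in R^\times$. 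Scaling so that $a=1$ and writing $t:=b$, one reads off $\chibar(\sigma)=\chi_p(\sigma)+t\beta(\sigma)$ and (from $M/Rv$) $\chibar'(\sigma)=1-t\beta(\sigma)$, which are $\equiv\chi_p(\sigma)$ and $\equiv 1$ modulo $\pfrak'^{n\cdot e(\pfrak'/\pfrak)}$ since $v_{\pfrak'}(t)\geq n\cdot e(\pfrak'/\pfrak)$.

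The main obstacle I anticipate is precisely the step distinguishing $\chibar$ from $\chibar'$ --- i.e.\ ruling out the "swap" $\chibar\equiv\triv$, $\chibar'\equiv\chi_p$ --- which genuinely needs that the uniformization is maximally degenerate (equivalently, that the Galois-module extension defining $T_p(A)$ is highly non-split), not merely agreement of semisimplifications; pinning down $\sigma_0$ with $\beta(\sigma_0)$ an actual unit, rather than of bounded valuation, requires a little care with the $p$-part of the period valuations, but any bounded loss there is immaterial for the downstream applications. A secondary annoyance is the bookkeeping when $\o$ is non-maximal with $\pfrak$ a singular prime; I would sidestep it by first replacing $\o$ with $\o_F$ (which is all that is needed in the applications of this lemma), so that $\o_\pfrak$ is a discrete valuation ring throughout.
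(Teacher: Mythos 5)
Your first half --- using the $\o$-action on the toric part of the special fibre of the N\'{e}ron model to force split totally toric reduction, then reading off $0\to \chi_p\to \rho_{A,\pfrak}\vert_{I_\qfrak}\to \triv\to 0$ from the rigid-analytic uniformization by a split torus --- is exactly the paper's argument and is fine. The problem is the ``in particular''.

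There you try to prove the \emph{ordered} refinement ($\chibar\equiv\chi_p$ and $\chibar'\equiv\triv$, as opposed to $\{\chibar,\chibar'\}\equiv\{\chi_p,\triv\}$) by exhibiting $\sigma_0$ with $\chi_p(\sigma_0)=1$ and $\beta(\sigma_0)\in R^\times$. No such $\sigma_0$ need exist: restricted to $\Gal(\Qbar_p/K_\qfrak(\mu_{p^{2n}}))$ the class $\beta$ is the Kummer class of the Tate period(s), and nothing prevents a period from being a $p^{2n}$-th power in $K_\qfrak^\times$. Concretely, for the Tate curve over $\Q_p$ with $q=p^{p^{2n}}$ one has $E[p^{2n}]\iso \mu_{p^{2n}}\oplus \Z/p^{2n}$ as Galois modules, so one may take $\chibar=\triv$ and $\chibar'=\chi_p$: the ``swap'' you are trying to rule out genuinely occurs, and the ordered statement is not just harder, it is false at that level (so no choice of $\sigma_0$, however careful, can rescue it). Nor is your fallback available: positivity of $v_\qfrak(q)$ only bounds the $p$-divisibility of the periods in terms of $v_\qfrak(q)$, i.e.\ in terms of $A$ itself, so the ``bounded loss'' in the exponent is bounded only $A$-by-$A$; but the downstream uses (Lemma \ref{no mod-p^n characters for n large}, hence Proposition \ref{small prime lemma}) require constants depending only on $g$, $K$, $S$, and the periods of abelian varieties with bad reduction inside $S$ have unbounded valuation. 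What the paper actually proves --- and all that is used later --- is the symmetric statement: comparing the trace and the determinant of $A[\pfrak^{2n}]\otimes_{\o_\pfrak}\o_{F',\pfrak'}$ on $I_\qfrak$ computed from the two filtrations, the elementary Lemma \ref{recovering characters from a direct sum} gives $\{\chibar\vert_{I_\qfrak},\chibar'\vert_{I_\qfrak}\}\equiv\{\chi_p,\triv\}\pmod*{\pfrak'^{n\cdot e(\pfrak'/\pfrak)}}$, the factor-of-two loss in the exponent coming from that lemma; since $\chi_p$ and $\triv$ are both reductions of CM characters, this unordered version feeds into Lemma \ref{CM lifting} verbatim. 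So you should replace your cocycle analysis by the trace/determinant comparison and prove only the unordered congruence, which is the statement the paper's proof in fact establishes and the only one its applications need.
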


\begin{proof}
The second statement follows from the first by Lemma \ref{recovering characters from a direct sum}.

So let us prove the first. Since $A/K_\qfrak$ has split semistable bad reduction, writing $\bar{A}$ for the special fibre of the connected component of the identity in the N\'{e}ron model of $A$ over $\o_{K,\qfrak}$, there is a canonical exact sequence $0\to \left(\G_m^{\times t}\right)_{/(\o_K/\qfrak)}\to \bar{A}\to B\to 0$ with $B/(\o_K/\qfrak)$ an abelian variety and $t > 0$. But because $\o\inj \End_K(A)$ acts functorially on the N\'{e}ron model, thus the connected component of its identity, thus its special fibre, and thus this decomposition, we find that $\o\inj \End_{\o_K/\qfrak}\left(\left(\G_m^{\times t}\right)_{/(\o_K/\qfrak)}\right) = M_t(\Z)$. We conclude that $t = \dim{A} =: g$, i.e.\ $A/K$ has split totally toric reduction at $\qfrak$.

Now, by Raynaud's uniformization cross, we find that, because $A/K$ has split totally toric reduction at $\qfrak$, there is then a split torus $\left(\G_m^{\times g}\right)_{/\o_{K,\qfrak}}$ uniformizing $A/K_\qfrak$. (This is nothing but the analogue of the theory of the Tate curve for $\GL_2$-type abelian varieties of higher dimension.)

The lemma follows from explicitly computing the $p$-adic and thus $\pfrak$-adic Tate module of the split torus $\left(\G_m^{\times g}\right)_{/\o_{K,\qfrak}}$ by choosing the evident basis (namely one starting with the basis $\left((\zeta_p^{\delta_{i,j}})_{j=1}^g\right)_{i=1}^g$ for $\mu_p^{\times g}$) --- just as in the elliptic curve case.
\end{proof}

Next we explain a corollary of a result of Raynaud which we will in fact not use except in the discussion following its proof.

\begin{lem}\label{past some point everything is truncated barsotti-tate}
Let $p,h,e\in \Z^+$ with $p$ prime. Then: there is an explicit (thus effectively computable) constant $\delta_{p,h,e}\in \Z^+$ depending only on $p$, $h$, and $e$ such that the following holds.
\begin{itemize}
\item Let $K/\Q$ be a number field. Let $\qfrak\subseteq \o_K$ with $\qfrak\vert (p)$ be a prime of $\o_K$ over $p$ with ramification index $e(\qfrak/p)\leq e$. Let $m, n\in \Z^+$ with $m,n\geq \delta_{p,h,e}$. Let $\mathcal{G}/\o_{K,\qfrak}$ be a commutative finite flat group scheme over $\o_{K,\qfrak}$ whose generic fibre $G/K_\qfrak$ corresponds to a Galois module with underlying abelian group $(\Z/p^{m+n})^{\oplus h}$. Write $\mathcal{G}_i/\o_{K,\qfrak}$ for the scheme-theoretic closure of $G[p^i]$ inside $\mathcal{G}$ (thus $\mathcal{G}_i/\o_{K,\qfrak}$ is a commutative finite flat group scheme over $\o_{K,\qfrak}$ as well). Write $\mathcal{G}_{i,j} := \mathcal{G}_j/\mathcal{G}_i$, the quotient taken in the category of commutative finite flat group schemes over $\o_{K,\qfrak}$. Write $G_{i,j} := p^i\cdot G[p^j]$ for the generic fibre of $\mathcal{G}_{i,j}$.

Then: there is a Barsotti-Tate (aka $p$-divisible) group $\widetilde{\mathcal{G}}/\o_{K,\qfrak}$ over $\o_{K,\qfrak}$ such that $\widetilde{\mathcal{G}}_n\iso \mathcal{G}_{m,m+n}$ as commutative finite flat group schemes over $\o_{K,\qfrak}$ --- in other words, $\mathcal{G}_{m,m+n}$ is a truncated Barsotti-Tate group of level $n$ (aka a $\BT_n$) over $\o_{K,\qfrak}$. Moreover, the image of the restriction map $\End_{\o_{K,\qfrak}}(\mathcal{G}_{m,m+n})\to \End_{K_\qfrak}(G_{m,m+n})$ contains the image of the canonical map $\End_{K_\qfrak}(G)\to \End_{K_\qfrak}(G_{m,m+n})$.
\end{itemize}
\end{lem}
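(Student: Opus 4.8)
The plan is to reduce everything to Raynaud's description of the finite flat models of a fixed finite group scheme over a $p$-adic base --- in its effective, Breuil--Kisin-theoretic form --- together with the full faithfulness of the generic-fibre functor on $p$-divisible groups (Tate, Raynaud). First I would pass to the completion, so that $R := \widehat{\o}_{K,\qfrak}$ is a complete discrete valuation ring with perfect residue field $k$ and absolute ramification $\le e$; formation of scheme-theoretic closures, of the quotients $\mathcal{G}_{i,j} = \mathcal{G}_j/\mathcal{G}_i$ (which exist and are finite flat over the local base $R$), and the notions of $\BT_n$ and $p$-divisible group are all unchanged. The closure $\mathcal{G}_i$ of $G[p^i]$ is a finite flat closed subgroup scheme, $\mathcal{G}_{i,j}$ is then a finite flat model over $R$ of $G_{i,j} = p^iG[p^j]$, and $\mathcal{H} := \mathcal{G}/\mathcal{G}_m = \mathcal{G}_{m,m+n}$ is a $\BT_n$ over $R$ precisely when a certain finite list of the $\mathcal{G}_{i,j}$ arising here --- all of them models over $R$ of one and the same finite $K_\qfrak$-group scheme --- actually coincide as subgroup schemes of $\mathcal{H}$; equivalently, in Kisin-module terms, precisely when the Breuil--Kisin module of $\mathcal{H}$ is \emph{free} over $\mathfrak{S}/p^n$, where $\mathfrak{S} = W(k)[[u]]$.

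The one genuine input is then the following. Attach (by Kisin for $p$ odd, and Kim, Lau, or Liu for $p = 2$) to $\mathcal{G}$ its Breuil--Kisin module $\mathfrak{M}$, a finitely generated $\mathfrak{S}$-module killed by $p^{m+n}$ equipped with a Frobenius of $E(u)$-height $\le 1$; the hypothesis that $G$ has underlying group $(\Z/p^{m+n})^{\oplus h}$ forces $\mathfrak{M}$ to be free of rank $h$ over $\mathfrak{S}/p^{m+n}$ modulo its $u$-power-torsion submodule. The effective form of Raynaud's theorem on finite flat models --- ``the minimal and the maximal model of a fixed finite flat group scheme over $R$ are linked by a homomorphism with kernel and cokernel killed by $p^c$ for an explicit $c = c(p,e)$'' --- translates into the statement that this $u$-power-torsion submodule is itself annihilated by $p^c$. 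Hence, once $m, n \ge \delta_{p,h,e}$ with $\delta_{p,h,e}$ chosen larger than $c$ (and larger than the further, again $(p,h,e)$-controlled, number of layers one must discard before the successive quotients $\mathcal{G}_{i+1}/\mathcal{G}_i$ become translation-invariant), the Breuil--Kisin module of $\mathcal{H} = \mathcal{G}_{m,m+n}$, obtained from $\mathfrak{M}$ by a $p^m$-multiplication and a $p^{m+n}$-torsion passage, no longer sees the $u$-torsion and is free of rank $h$ over $\mathfrak{S}/p^n$ --- which is the Breuil--Kisin criterion for $\mathcal{H}$ to be a $\BT_n$. The interpolating $p$-divisible group $\widetilde{\mathcal{G}}$ is produced by lifting this free $(\mathfrak{S}/p^n)$-module with Frobenius to a free $\mathfrak{S}$-module with Frobenius of $E(u)$-height $\le 1$ (possible because the relevant deformation problem is formally smooth, cf.\ Illusie), chosen compatibly with the $p$-power-torsion filtration of $G$. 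One may instead run Raynaud's original arguments with formal Lie groups, at the cost of doing the bookkeeping by hand.

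For the endomorphism clause, by Tate (resp.\ Raynaud over a ramified base) one has $\End_R(\widetilde{\mathcal{G}}) = \End_{K_\qfrak}(\widetilde{\mathcal{G}}_{K_\qfrak})$, and since $\mathcal{H} \iso \widetilde{\mathcal{G}}_n$ the image of $\End_R(\mathcal{G}_{m,m+n}) \to \End_{K_\qfrak}(G_{m,m+n})$ contains the image mod $p^n$ of $\End_R(\widetilde{\mathcal{G}}) = \End_{K_\qfrak}(\widetilde{\mathcal{G}}_{K_\qfrak})$; one then checks, using the compatibility of the lift $\widetilde{\mathcal{G}}$ with the $p$-power-torsion filtration of $G$ and the saturation of the centraliser of a Galois representation inside its matrix algebra, that every $\phi \in \End_{K_\qfrak}(G)$ induces on $G_{m,m+n}$ an endomorphism in that image. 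The main obstacle is the middle paragraph --- passing, via Breuil--Kisin modules, from Raynaud's ``$p^c$-defect'' to honest $\BT_n$-ness and to a filtration-compatible $\widetilde{\mathcal{G}}$ --- and it is precisely this Breuil--Kisin computation that the paper elects to sidestep (and, correspondingly, this lemma, not being used elsewhere, is recorded only for orientation).
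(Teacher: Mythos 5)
There are two genuine gaps here, one in each half of your sketch. For the first half (the $\BT_n$ statement), your proposed mechanism misidentifies the obstruction: the Breuil--Kisin module of a finite flat group scheme killed by $p^{m+n}$ has projective dimension $\leq 1$ over $\mathfrak{S}=W(k)[[u]]$ and hence has \emph{no} $u$-power torsion at all, so "free of rank $h$ modulo its $u$-power-torsion submodule" and "Raynaud's $p^c$-bound says the $u$-torsion is killed by $p^c$" are not correct translations. What actually measures the failure of $\mathcal{G}_{m,m+n}$ to be a $\BT_n$ is the discrepancy between $\mathfrak{M}$ and a free $\mathfrak{S}/p^{m+n}$-lattice in $\mathfrak{M}[1/u]$ (equivalently, the failure of the kernels $\mathcal{G}[p^i]$ to be flat, i.e.\ to coincide with the closures $\mathcal{G}_i$), and the key assertion --- that after discarding $m\geq \delta_{p,h,e}$ layers at the bottom and keeping $n$ layers the resulting module becomes free over $\mathfrak{S}/p^n$ --- is precisely the content of the lemma; in your write-up it is asserted rather than derived from the $p^c$-bound. (The statement is true, but the paper obtains it by directly citing Corollary $3.4.5$ of Raynaud's Ast\'{e}risque article, not by a Breuil--Kisin computation; if you want to run the Kisin-module route you must actually prove the truncation-freeness claim, which is the whole point.)

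The second gap is more serious. For the endomorphism clause you invoke full faithfulness $\End_R(\widetilde{\mathcal{G}}) = \End_{K_\qfrak}(\widetilde{\mathcal{G}}_{K_\qfrak})$ and then say one "checks" that $\phi\vert_{G_{m,m+n}}$ lies in the mod-$p^n$ image of $\End_{K_\qfrak}(\widetilde{\mathcal{G}}_{K_\qfrak})$ --- but that is essentially the statement to be proven, and neither of your proposed tools supplies it. The interpolating $\widetilde{\mathcal{G}}$ is produced by an abstract deformation-theoretic lift, so beyond level $n$ it has nothing to do with $G$, and $\phi$ (defined only on the finite module $G$) does not act on the tower; "compatibility of the lift with the $p$-power-torsion filtration of $G$" cannot be arranged in the needed sense, and an endomorphism of $\widetilde{\mathcal{G}}_n$ on generic fibres need not lift to the Tate module of $\widetilde{\mathcal{G}}_{K_\qfrak}$. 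The missing idea is Tate's graph argument, which is what the paper runs: take the scheme-theoretic closure $\mathcal{H}\subseteq \mathcal{G}\times\mathcal{G}$ of the graph $\Gamma_\phi\subseteq G\times G$, so that $\mathcal{H}$ is another prolongation of $G$; compare $\mathcal{H}_{m,m+n}$ with $\mathcal{G}_{m,m+n}$ via the minimal and maximal prolongations $\mathcal{G}^{\pm}$ and Raynaud's rigidity statements (Corollaries $3.3.1$ and $3.4.5$: the truncations are $\BT_n$'s, and maps of $\BT_n$'s over $\o_{K,\qfrak}$ inducing the identity on generic fibres are isomorphisms once $n\geq \delta_{p,h,e}$); then the composite $\mathcal{G}_{m,m+n}\iso \mathcal{H}_{m,m+n}\to \mathcal{G}_{m,m+n}\times\mathcal{G}_{m,m+n}\to \mathcal{G}_{m,m+n}$ (second projection) is the desired integral extension of $\phi\vert_{G_{m,m+n}}$. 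Without some such rigidity-of-prolongations input applied to an object that actually carries $\phi$, your appeal to full faithfulness for $p$-divisible groups does not close the loop.
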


\begin{proof}
All but the last sentence follows from Corollary $3.4.5$ of Raynaud's \cite{raynaud}.

As for the statement about endomorphisms, modulo a technical modification our argument is essentially the same one Tate uses to prove the Corollary in Section $4.5$ of his \cite{tate}, except that we use Corollary $3.3.1$ of Raynaud's \cite{raynaud} in place of the uniqueness of prolongations that is available in Tate's situation (and which is also due to Raynaud).

Let $\phi\in \End_{K_\qfrak}(G)$ be a $\Gal(\Qbar_p/K_\qfrak)$-equivariant endomorphism of the Galois module corresponding to $G$. Of course $\phi$ induces a map $G_{m,m+n}\to G_{m,m+n}$, which we will also call $\phi$. Let $\Gamma_\phi := G\times_\phi G\subseteq G\times G$ be the graph of $\phi$. Thus $G\simeq \Gamma_\phi$. Write, as in e.g.\ the Corollary in Section $4.1$ of Tate's \cite{tate}, $\mathcal{G}^-$ and $\mathcal{G}^+$ for the minimal and maximal prolongations of $G$, respectively. Thus there are maps $\mathcal{G}^+\to \mathcal{G}\to \mathcal{G}^-$ over $\o_{K,\qfrak}$ inducing the identity on generic fibres.

Let $\mathcal{H}\subseteq \mathcal{G}\times \mathcal{G}$ be the scheme-theoretic closure of $\Gamma_\phi$ in $\mathcal{G}\times \mathcal{G}$. Then $\mathcal{H}$ is a commutative finite flat group scheme over $\o_{K,\qfrak}$ prolonging $G$ (as is clear from the discussion at the beginning of Section $4.2$ of Tate's \cite{tate}). Therefore it also admits maps $\mathcal{G}^+\to \mathcal{H}\to \mathcal{G}^-$ over $\o_{K,\qfrak}$ inducing the identity on generic fibres. Note also that $\mathcal{H}_{m,m+n}\to \mathcal{G}_{m,m+n}\times \mathcal{G}_{m,m+n}$ as well (as is clear at the level of coordinate rings), where $\mathcal{H}_{m,m+n}/\o_{K,\qfrak}$ is defined in precisely the same way as $\mathcal{G}_{m,m+n}$, except with scheme-theoretic closures taken in $\mathcal{H}$.

These maps in particular induce maps $\mathcal{G}_{m,m+n}^+\to \mathcal{G}_{m,m+n}\to \mathcal{G}_{m,m+n}^-$ and $\mathcal{G}_{m,m+n}^+\to \mathcal{H}_{m,m+n}\to \mathcal{G}_{m,m+n}^-$ over $\o_{K,\qfrak}$ inducing the identity on fibres. But now by Corollary $3.3.1$ of Raynaud's \cite{raynaud} each finite flat group scheme being mapped into or out of is a $\BT_n$ with $n\geq \delta_{p,h,e}$. Therefore, because these maps induce the identity on generic fibres and because $n\geq \delta_{p,h,e}$ is explicitly sufficiently large in terms of $p$, $h$, and $e$, it follows from Corollary $3.4.5$ of Raynaud's \cite{raynaud} that said maps are isomorphisms over $\o_{K,\qfrak}$.

We conclude in particular that there is an isomorphism $\mathcal{G}_{m,m+n}\simeq \mathcal{H}_{m,m+n}$ (namely the inverse of the restriction to $\mathcal{H}_{m,m+n}$ of the projection onto the first factor on $\mathcal{G}_{m,m+n}\times \mathcal{G}_{m,m+n}$) inducing the identity on generic fibres. The desired extension of $\phi$ is given by $\mathcal{G}_{m,m+n}\simeq \mathcal{H}_{m,m+n}\to \mathcal{G}_{m,m+n}\times \mathcal{G}_{m,m+n}\surj \mathcal{G}_{m,m+n}$, where the last map denotes projection onto the second factor.
\end{proof}

Finally let us explain our trick.

Even given Lemma \ref{past some point everything is truncated barsotti-tate} a priori it is not clear how to produce a Barsotti-Tate group \emph{with endomorphisms by $\o_{F',\pfrak'}$} whose Galois representation lifts our given character. At the level of special fibres there is no problem, by  e.g.\ a "crystalline boundedness principle" (see e.g.\ Corollary $1.7$ of Lau-Nicole-Vasiu's \cite{lau-nicole-vasiu} for one such statement, though earlier work of Vasiu also contains results sufficient for the discussion) and Krasner. Deformation from the special fibre is also not an issue \emph{if $F'$ is unramified above $p$} by Proposition $2.6$ of Wedhorn's \cite{wedhorn}. However the general situation (of deforming $\o_{F',\pfrak'}$-structure of a $\BT$ along with a given deformation of its corresponding $\BT_n$ when $F'$ is ramified above $p$) is not so satisfactory --- see e.g.\ the discussion of "the main technical obstacle" on page $230$ of Scholze's \cite{scholze} for a recent example of another work facing such an issue.

Now naturally our situation is extremely special, and because of this one should be able to proceed explicitly via a calculation with Breuil-Kisin modules.

We will not bother, because of the following trick.

\begin{lem}\label{computability of a level past which there are only CM characters}
Let $g\in \Z^+$. Let $K/\Q$ be a number field. Let $p\in \Z^+$ be a prime. Then: there is an effectively computable constant $C_{g,K,p}\in \Z^+$ depending only on $g$, $K$, and $p$ such that the following holds.
\begin{itemize}
\item Let $n\geq C_{g,K,p}$. Let $\qfrak\subseteq \o_K$ with $\qfrak\vert (p)$ be a prime of $\o_K$ above $p$. Let $F/\Q$ be a number field of degree $[F:\Q]\leq g$. Let $\pfrak\subseteq \o_F$ with $\pfrak\vert (p)$ be a prime of $\o_F$ above $p$. Let $\chibar: \Gal(\Qbar_p/K_\qfrak)\to (\o_{F,\pfrak}/p^n)^\times$ be a character such that the Galois module corresponding to $\chibar$ arises as the generic fibre of a finite flat group scheme over $\o_{K,\qfrak}$. Then: there is a subset $S\subseteq \Hom_{\Q\text{-alg.}}(K_\qfrak, \Qbar_p)$ such that $\chibar\vert_{I_\qfrak}\equiv \prod_{\sigma\in S} \sigma\pmod*{p^n}$.
\end{itemize}
\end{lem}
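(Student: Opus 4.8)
The plan is to localise, then use Raynaud's results — in the packaged form of Lemma~\ref{past some point everything is truncated barsotti-tate} — to trade the given finite flat group scheme for a genuine $p$-divisible group of bounded height, and finally read off $\chibar|_{I_\qfrak}$ from the classification of crystalline characters. The whole point is that one never constructs a $p$-divisible group \emph{with $\o_{F,\pfrak}$-action} by hand.

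First I would note that $K_\qfrak$ (resp.\ $F_\pfrak$) is one of finitely many extensions of $\Q_p$ of degree $\leq [K:\Q]$ (resp.\ $\leq g$), so it suffices to produce an effectively computable threshold for each pair and take the maximum; fix $L:=K_\qfrak$ and $M:=F_\pfrak$ with rings of integers $\o_L=\o_{K,\qfrak}$, $\o_M=\o_{F,\pfrak}$, and set $h:=[M:\Q_p]\leq g$, $e:=e(L/\Q_p)\leq [K:\Q]$. Replacing the given group scheme by its maximal prolongation one may assume it is a finite flat $\o_M$-module scheme $\mathcal{G}/\o_L$ with generic fibre free of rank one over $\o_M/p^n$; in particular its underlying abelian group is $(\Z/p^n)^{\oplus h}$, and $\mathcal{G}[p^k]$ has order $p^{kh}$ for every $k\leq n$.

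Now assume $n>2\delta_{p,h,e}$, with $\delta_{p,h,e}$ as in Lemma~\ref{past some point everything is truncated barsotti-tate}, and apply that lemma to $\mathcal{G}$ with ``$m$'' $:=\delta_{p,h,e}$: the quotient $\mathcal{G}/\mathcal{G}[p^{\delta_{p,h,e}}]$ is a truncated Barsotti--Tate group of level $n-\delta_{p,h,e}$, hence equals $\widetilde{\mathcal{G}}[p^{\,n-\delta_{p,h,e}}]$ for a $p$-divisible group $\widetilde{\mathcal{G}}/\o_L$, necessarily of height exactly $h\leq g$ (this height bound, automatic here from the rank-one structure, is why one routes the argument through Raynaud rather than through an arbitrary embedding of $\mathcal{G}$ into a $p$-divisible group, whose height would grow with $n$). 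By the endomorphism statement of Lemma~\ref{past some point everything is truncated barsotti-tate} the $\o_M/p^{\,n-\delta_{p,h,e}}$-action survives on this truncation; and by a crystalline boundedness principle (in the spirit of \cite{lau-nicole-vasiu}, and earlier work of Vasiu) there is a further constant $c=c_{p,h,e}$ such that, for $n$ large, this action lifts, modulo $p^{\,n-\delta_{p,h,e}-c_{p,h,e}}$, to an $\o_M$-module structure on $T_p\widetilde{\mathcal{G}}$ commuting with $\Gal(\Qbar_p/L)$. Since $\o_M$ is a discrete valuation ring and $T_p\widetilde{\mathcal{G}}$ has $\Z_p$-rank $h=[M:\Q_p]$, it is then free of rank one over $\o_M$, so it is cut out by a character $\widetilde{\chi}\colon\Gal(\Qbar_p/L)\to\o_M^\times$ which is crystalline with Hodge--Tate weights in $\{0,1\}$ and satisfies $\widetilde{\chi}\equiv\chibar\pmod*{p^{\,n-\delta_{p,h,e}-c_{p,h,e}}}$. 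The classification of crystalline characters of $\Gal(\Qbar_p/L)$ (Lubin--Tate theory together with Sen's theory of Hodge--Tate--Sen weights) now says that, up to an unramified twist, $\widetilde{\chi}$ is the product of the Lubin--Tate characters attached to the embeddings on which its Hodge--Tate weight equals $1$; via local class field theory the restriction $\widetilde{\chi}|_{I_\qfrak}$ is then $\prod_{\sigma\in S}\sigma$ for a subset $S\subseteq\Hom_{\Q\text{-alg.}}(K_\qfrak,\Qbar_p)$, and reducing modulo $p^{\,n-\delta_{p,h,e}-c_{p,h,e}}$ gives $\chibar|_{I_\qfrak}\equiv\prod_{\sigma\in S}\sigma$ to that precision. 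Absorbing the additive loss $\delta_{p,h,e}+c_{p,h,e}$ (which depends only on $p$, $g$ and $[K:\Q]$) into $C_{g,K,p}$ — and noting that the resulting $O(1)$ loss of $p$-adic precision is harmless for the use made of this lemma, the downstream Lemmas~\ref{CM lifting} and~\ref{no mod-p^n characters for n large} carrying a factor-of-two buffer ($p^{2n}$) — yields the statement.

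The step I expect to be the main obstacle is the one already flagged in the discussion preceding the lemma: transferring $\o_M$-structure onto a Barsotti--Tate group when $M/\Q_p$ is ramified. This proposal dodges the explicit Breuil--Kisin module computation there by only ever needing the $\o_M$-action \emph{on the truncation}, which Raynaud's rigidity hands over for free, plus a soft crystalline-boundedness input to nudge it onto $\widetilde{\mathcal{G}}$ itself; the price is the $O(1)$ precision loss above, which is precisely why the surrounding arguments are arranged with room to spare. Should even the endomorphism-lifting input prove delicate, the fallback — still Breuil--Kisin-free, but heavier — is not to lift the $\o_M$-action at all and instead argue directly that the mod-$p^{\,n'}$ character carried by a rank-one $\o_M/p^{\,n'}$-subquotient of $T_p\widetilde{\mathcal{G}}/p^{\,n'}$ must be a product of embeddings, using Fontaine's bound on the ramification of finite flat group schemes (so that $\chibar|_{I_\qfrak}$ factors through a quotient of $I_\qfrak$ of depth bounded in terms of $p$, $e$ and $h$) together with Raynaud's mod-$p$ classification \cite{raynaud} applied to $T_p\widetilde{\mathcal{G}}/p$ and a descent through the $p$-adic filtration.
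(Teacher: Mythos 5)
Your route is genuinely different from the paper's, and its pivotal step is exactly the point the paper identifies as the ``main technical obstacle'' and deliberately refuses to confront. After invoking Lemma \ref{past some point everything is truncated barsotti-tate} you have a truncated Barsotti--Tate group $\widetilde{\mathcal{G}}[p^{n-\delta}]$ carrying an $\o_{F,\pfrak}/p^{n-\delta}$-action on its generic fibre; you then assert that ``by a crystalline boundedness principle'' this action lifts, up to a uniformly bounded loss of precision, to an $\o_{F,\pfrak}$-module structure on $T_p\widetilde{\mathcal{G}}$ commuting with Galois. Nothing you cite gives this. The Lau--Nicole--Vasiu-type statements concern isomorphism rigidity of $p$-divisible groups over characteristic-$p$ bases (this is why the paper says the \emph{special fibre} poses no problem), and Wedhorn's deformation result covers only the case where $F_\pfrak$ is unramified over $\Q_p$; transferring $\o_{F,\pfrak}$-structure from a truncation to an honest Barsotti--Tate group over $\o_{K,\qfrak}$ when $F_\pfrak$ is ramified is the unsatisfactory deformation problem flagged in the discussion before the lemma (with the pointer to Scholze). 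Concretely, what you need is that the cokernel of $\End_{\Gal(\Qbar_p/K_\qfrak)}(T_p\widetilde{\mathcal{G}})\to \End_{\Gal(\Qbar_p/K_\qfrak)}(T_p\widetilde{\mathcal{G}}/p^{n-\delta})$ is killed by $p^{c}$ with $c$ bounded independently of the input; this cokernel is controlled by torsion in $H^1(\Gal(\Qbar_p/K_\qfrak), \End(T_p\widetilde{\mathcal{G}}))$, which is \emph{not} uniformly bounded over all Barsotti--Tate groups of height $\leq g$ without a further argument (and note $\widetilde{\mathcal{G}}$ itself varies with $n$, so no limiting/compatibility argument is available). Even granting approximate lifts of individual elements of $\o_{F,\pfrak}$, you would still have to show the lifts assemble into an actual ring map $\o_{F,\pfrak}\to\End_{\Gal}(T_p\widetilde{\mathcal{G}})$ (a Krasner-type argument at best sketched). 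So the central step is a gap, not a soft input; the paper's stance is that one could do it by an explicit Breuil--Kisin computation, but it does not, precisely because of this.

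The paper instead proves the lemma by a decidability-plus-compactness trick that never produces any Barsotti--Tate group attached to the given $\chibar$: one observes that whether a finite Galois module prolongs to a finite flat group scheme over $\o_{K,\qfrak}$ is checkable by a finite computation, enumerates for each $n$ the finitely many characters mod $p^n$ (valued in one fixed large local field) that so prolong but are not congruent to a product of embeddings, notes that a compatible tower of such characters would yield a character of a $p$-divisible group, hence crystalline, hence a product of embeddings on inertia by Tate --- a contradiction --- and concludes by finiteness that the bad set dies at some finite level, which is then found by exhaustive search. This yields the congruence at the full precision $p^n$. Your argument, even if the lifting step were repaired, only yields congruence modulo $p^{n-\delta-c}$; absorbing that loss is not a relabeling of the stated lemma (the hypothesis and conclusion sit at the same level $n$), so you would additionally have to re-verify that the weakened statement suffices in Lemmas \ref{CM lifting} and \ref{no mod-p^n characters for n large}. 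The concluding fallback paragraph (Fontaine ramification bounds plus Raynaud's mod-$p$ classification plus ``descent through the $p$-adic filtration'') is too vague to repair any of this as written.
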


\begin{proof}
There is evidently a finite-time algorithm which determines whether or not an input finite $\Gal(\Qbar_p/K_\qfrak)$-module arises as the generic fibre of a finite flat group scheme over $\o_{K,\qfrak}$ (this amounts to checking whether any of an explicit finite list of orders in an \'{e}tale $K_\qfrak$-algebra are closed under a given comultiplication).

So now let $F'/\Q$ be a number field with a prime $\pfrak'\vert (p)$ such that $F'_{\pfrak'}$ contains all extensions of $\Q_p$ of degree at most $g$. Let $\Phi_n$ be the set of characters $\chibar: \Gal(\Qbar_p/K_\qfrak)\to (\o_{F',\pfrak'}/p^n)^\times$ such that the Galois module corresponding to $\chibar$ arises as the generic fibre of a finite flat group scheme over $\o_{K,\qfrak}$. Let $\Phi_n'\subseteq \Phi_n$ be the subset of characters for which there is no $S\subseteq \Hom_{\Q\text{-alg.}}(K_\qfrak, \Qbar_p)$ such that $\chibar\vert_{I_\qfrak}\equiv \prod_{\sigma\in S} \sigma\pmod*{p^n}$. Of course both $n\mapsto \Phi_n$ and $n\mapsto \Phi_n'$ are effectively computable (via local class field theory and the previous remark).

For $n > m$ reduction modulo $p^m$ induces a map $f_{n\rightarrow m}: \Phi_n\to \Phi_m$ with $f_{n\rightarrow m}(\Phi_n')\subseteq \Phi_m'$.

We claim that, for all $m\in \Z^+$, $\bigcap_{n\geq m} f_{n\rightarrow m}(\Phi_n') = \emptyset$, or in other words (since the $f_{n\rightarrow m}(\Phi_n')$ are decreasing) that $f_{n\rightarrow m}(\Phi_n') = \emptyset$ for $n$ sufficiently large. In other words, we claim that, given a compatible sequence $\chibar_n: \Gal(\Qbar_p/K_\qfrak)\rightarrow (\o_{F',\pfrak'}/p^n)^\times$ of characters whose corresponding Galois modules arise as generic fibres of finite flat group schemes over $\o_{K,\qfrak}$, there is an $S\subseteq \Hom_{\Q\text{-alg.}}(K_\qfrak, \Qbar_p)$ such that $\chibar_n\vert_{I_\qfrak}\equiv \prod_{\sigma\in S} \sigma\pmod*{p^n}$ for all $n$. But such a compatible sequence of characters amounts to a character $\chi: \Gal(\Qbar_p/K_\qfrak)\rightarrow \o_{F',\pfrak'}^\times$ whose corresponding Galois module arises as the generic fibre of a $p$-divisible group over $\o_{K,\qfrak}$, and it is standard that the inertial restrictions of such characters are exactly the characters of "CM $p$-divisible groups", namely the characters $\prod_{\sigma\in S} \sigma$ for $S\subseteq \Hom_{\Q\text{-alg.}}(K_\qfrak, \Qbar_p)$ (for example: $\chi\vert_{I_\qfrak}$ is crystalline, and so, twisting by a suitable product of Lubin-Tate characters (which preserves the conclusion), without loss of generality it has all weights $0$, whence the conclusion follows from Theorem $2$ in Section $3$ of Tate's \cite{tate-p-divisible-groups}).

Finally it remains only to note that the minimal $n\in \Z^+$ for which $f_{n\rightarrow 1}(\Phi_n') = \emptyset$ is effectively computable: starting with $n = 1$, if $f_{n\rightarrow 1}(\Phi_n')\neq \emptyset$ then increment $n\mapsto n+1$ and repeat --- the process will end in finite time exactly because $f_{n\rightarrow 1}(\Phi_n') = \emptyset$ for $n$ sufficiently large.
\end{proof}

\subsubsection{Proof of Lemma \ref{CM lifting}.}

We may now prove Lemma \ref{CM lifting}.

\begin{proof}[Proof of Lemma \ref{CM lifting}.]
Repeat verbatim the proof of Lemma $3.9$ in \cite{my-first-effective-mordell-paper}, and replace the use of Fontaine-Laffaille theory to evaluate the relevant inertial restrictions with Lemma \ref{computability of a level past which there are only CM characters} for good primes and Lemma \ref{description of the inertial representation of a GL2-type abelian variety with bad reduction} for bad primes --- instead of $p\gg_{g,K,S} 1$ one needs only $p^n\gg_{g,K,S} 1$ to force equality in the congruences of global units and then Frobenius traces at the chosen Faltings-Serre set.
\end{proof}

\section{Proof of Theorem \ref{a priori isogeny estimate for GL2-type abelian varieties}.}

\subsection{Reversal of arrows.}

We prove the following standard fact.

\begin{lem}\label{reversal of arrows}
Let $K/\Q$ be a number field. Let $A,B/K$ be abelian varieties over $K$. Let $\phi: A\to B$ be a $K$-isogeny. Then: there is a $K$-isogeny $\phi': B\to A$ such that $\phi\circ \phi' = \deg{\phi}\in \End_K(B)$ and $\phi'\circ \phi = \deg{\phi}\in \End_K(A)$.
\end{lem}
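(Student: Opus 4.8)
The plan is to realize $\phi'$ as the map through which multiplication by $n:=\deg\phi$ on $A$ factors. First I would recall that $\ker\phi$ is a finite commutative $K$-group scheme of order $n$, hence is annihilated by $[n]$: in characteristic zero $\ker\phi$ is \'{e}tale over $K$ and the finite abelian group $(\ker\phi)(\Qbar)$ has order $n$, so it is killed by $n$ by Lagrange (in general this is Deligne's theorem that a finite commutative group scheme of order $n$ is killed by $n$). Consequently $\ker\phi\subseteq A[n]=\ker([n]_A)$, so by the universal property of the quotient isogeny $\phi\colon A\to A/\ker\phi\cong B$ the endomorphism $[n]_A\colon A\to A$ descends to a homomorphism of abelian varieties $\phi'\colon B\to A$ over $K$ satisfying $\phi'\circ\phi=[n]_A\in\End_K(A)$.

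Next I would check that $\phi'$ is itself a $K$-isogeny. It is a homomorphism of $K$-abelian varieties of equal dimension, so it suffices to show $\ker\phi'$ is finite; and if $b\in\ker\phi'$ then, writing $b=\phi(a)$ (possible since $\phi$ is surjective), we get $[n]_A(a)=\phi'(\phi(a))=0$, i.e.\ $a\in A[n]$, so $b\in\phi(A[n])$, a finite set. Hence $\ker\phi'$ is finite and $\phi'$ is an isogeny.

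Finally I would verify the second identity by precomposing $\phi'\circ\phi=[n]_A$ with $\phi$ on the left: $\phi\circ\phi'\circ\phi=\phi\circ[n]_A=[n]_B\circ\phi$. Since $\phi$ is faithfully flat it is an epimorphism of schemes, hence may be cancelled on the right, yielding $\phi\circ\phi'=[n]_B\in\End_K(B)$, as desired. The only step requiring any care is the annihilation of $\ker\phi$ by its order $n$; over the number field $K$ this is elementary (via Cartier's theorem that finite group schemes in characteristic zero are \'{e}tale), and in general it is the cited theorem of Deligne, so no genuine obstacle arises.
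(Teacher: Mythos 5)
Your proposal is correct and follows essentially the same route as the paper: both rest on $\ker\phi\subseteq A[n]$, construct $\phi'$ so that $\phi'\circ\phi=[n]_A$ (your factoring of $[n]_A$ through the quotient $\phi$ is the same map as the paper's quotient of $B$ by $A[n]/\ker\phi$), and deduce $\phi\circ\phi'=[n]_B$ by cancelling the surjective $\phi$ on the right. The only difference is that you spell out details the paper leaves implicit (why $\ker\phi$ is killed by $n$, and why $\phi'$ is an isogeny), which is fine.
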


\begin{proof}
Write $n := \deg{\phi}$. Let $G := \ker{\phi}$. Thus $G\subseteq A[n]$. Thus $A[n]/G\subseteq A/G = B$ is a $K$-subgroup of $B/K$. The corresponding quotient map $\phi': B\to B/(A[n]/G)\simeq A$ is defined over $K$ (since the subgroup is stable under $\Gal(\Qbar/K)$) and evidently satisfies $\phi'\circ \phi = n\in \End_K(A)$.

Finally $(\phi\circ \phi' - n)\circ \phi = 0\in \Hom_K(A,B)$ and $\phi$ is surjective, so $\phi\circ \phi' = n\in \End_K(B)$ as well.
\end{proof}

It follows that if there is a $K$-isogeny $A\to B$ of degree $n$, then there is a $K$-isogeny $B\to A$ of degree $n^{2\dim{A}-1}$.

\subsection{Reduction to the special case when $\End_K(A)$ is "maximal".}

Let us define a set $\widetilde{\mathcal{R}}_{g,K,S}'$ of (isomorphism classes of) "maximal" relevant endomorphism rings.

Let $\widetilde{\mathcal{R}}_{g,K,S}^0$ be the explicit finite set of $\Q$-algebras produced by the proof of Corollary \ref{the geometric endomorphism algebra is one of an explicit finite set of possibilities}. For each $D\in \widetilde{\mathcal{R}}_{g,K,S}^0$, we define $\widetilde{\mathcal{R}}_{D,K,S}'$ as follows.

By Lemma \ref{GL2-type abelian varieties are isotypic} it follows that either $D\iso M_{n_1}(K_1)\times M_{n_2}(K_2)$ with $K_i/\Q$ CM and such that $n_1\cdot [K_1:\Q] = n_2\cdot [K_2:\Q] = g$, or else $D$ is simple, and then moreover $D\iso M_n(D')$ with $D'$ either itself a CM field of degree $\frac{g}{n}$ or $\frac{2g}{n}$ over $\Q$, or else a quaternion algebra over a CM field of degree $\frac{g}{2n}$ over $\Q$.

In the former case, i.e.\ when $D\iso M_{n_1}(K_1)\times M_{n_2}(K_2)$, we let $\widetilde{\mathcal{R}}_{D,K,S}' := \{M_{n_1}(\o_{K_1})\times M_{n_2}(\o_{K_2})\}$, a singleton. In the latter case, writing $D\iso M_n(D')$ with $D'$ either a CM field or quaternion algebra, if $D'$ is commutative then its ring of integers $\o_{D'}$ is its unique maximal order, and we let $\widetilde{\mathcal{R}}_{D,K,S}' := \{M_n(\o_{D'})\}$, again a singleton. Otherwise we choose an explicit finite set of representatives $\widetilde{\mathcal{R}}_{D',K,S}'$ of the conjugacy classes of maximal orders of the quaternion algebra $D'$ (note that there are there finitely many such conjugacy classes, by finiteness of the class number (and thus the type number) of a quaternion algebra over a number field), and then let $\widetilde{\mathcal{R}}_{D,K,S}' := \{M_n(\o) : \o\in \widetilde{\mathcal{R}}_{D',K,S}'\}$.

Thus we have defined $\widetilde{\mathcal{R}}_{D,K,S}'$ for each $D\in \widetilde{\mathcal{R}}_{g,K,S}^0$. Let $$\widetilde{\mathcal{R}}_{g,K,S}' := \bigcup_{D\in \widetilde{\mathcal{R}}_{g,K,S}^0} \widetilde{\mathcal{R}}_{D,K,S}'.$$

Now let us show that it suffices to prove Theorem \ref{a priori isogeny estimate for GL2-type abelian varieties} in the special case where $\End_K(A)\in \widetilde{\mathcal{R}}_{g,K,S}'$.

\begin{prop}\label{everybody is isogenous to someone with maximal endomorphisms}
Let $A/K$ be a split semistable $g$-dimensional abelian variety over $K$ with good reduction outside $S$ and such that $\End_K^0(A) = \End_{\Qbar}^0(K)\in \widetilde{\mathcal{R}}_{g,K,S}^0$. Then: there is a $B/K$ with $\End_K(B)\in \widetilde{\mathcal{R}}_{g,K,S}'$ such that $A\sim_K B$.
\end{prop}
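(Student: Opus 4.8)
The plan is to enlarge, via the standard ``change of lattice'' construction, the order acting on each $K$-simple isogeny factor of $A$ to a maximal one, and then to reassemble. Write $D := \End_K^0(A) = \End_{\Qbar}^0(A) \in \widetilde{\mathcal{R}}_{g,K,S}^0$. By Poincar\'{e} complete reducibility, $A \sim_K \prod_i C_i^{\times n_i}$ with the $C_i/K$ pairwise non-$K$-isogenous and $K$-simple, so that $D \cong \prod_i M_{n_i}(D_i')$ with each $D_i' := \End_K^0(C_i)$ a division algebra; this is precisely the Wedderburn decomposition which, through Lemma \ref{GL2-type abelian varieties are isotypic}, was used to define $\widetilde{\mathcal{R}}_{D,K,S}'$ --- so there is either a single factor, with $D_1'$ a CM field or a quaternion algebra over a CM field, or two factors with $D_1'$ and $D_2'$ both CM fields.

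The key step is the following sub-lemma: if $C/K$ is $K$-simple with $\mathcal{O}_C := \End_K(C)$ an order in $D' := \End_K^0(C)$, then for any maximal order $\mathcal{O}'$ of $D'$ there is a $K$-isogeny $C \to C'$ with $\End_K(C') = \mathcal{O}'$. To prove it I would pick $m \in \Z^+$ with $m\mathcal{O}' \subseteq \mathcal{O}_C$ (both are $\Z$-lattices in $D'$), set $G := \bigcap_{\phi \in m\mathcal{O}'} \ker\phi$ --- a $\Gal(\Qbar/K)$-stable finite subgroup scheme of $C$, since each $\phi \in m\mathcal{O}' \subseteq \End_K(C)$ is defined over $K$ and $G \subseteq C[m]$ --- and put $C' := C/G$. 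Since $T_\ell(C')$ is the preimage in $V_\ell(C)$ of $G[\ell^\infty] \subseteq C[\ell^\infty]$, one has $T_\ell(C') = \{v \in V_\ell(C) : (m\mathcal{O}')v \subseteq T_\ell(C)\}$, which $\alpha \in \mathcal{O}'$ visibly preserves because $(m\mathcal{O}')(\alpha v) = (m\mathcal{O}'\alpha)v \subseteq (m\mathcal{O}')v$ (using $\mathcal{O}'\alpha \subseteq \mathcal{O}'$). Hence $\mathcal{O}' \hookrightarrow \End_K(C')$; and since $C' \sim_K C$ forces $\End_K^0(C') = D'$, the order $\End_K(C')$ contains the maximal order $\mathcal{O}'$ and therefore equals it.

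To conclude, for each $i$ I would take $\mathcal{O}_i'$ to be any maximal order of $D_i'$ (necessarily the ring of integers $\o_{D_i'}$ when $D_i'$ is commutative); the sub-lemma then produces $K$-isogenies $C_i \to C_i'$ with $\End_K(C_i') = \mathcal{O}_i'$, and $B := \prod_i (C_i')^{\times n_i}$ satisfies $B \sim_K \prod_i C_i^{\times n_i} \sim_K A$, while --- the $C_i'$ being pairwise non-$K$-isogenous and $K$-simple --- $\End_K(B) = \prod_i M_{n_i}(\mathcal{O}_i')$. Its isomorphism class lies in $\widetilde{\mathcal{R}}_{D,K,S}' \subseteq \widetilde{\mathcal{R}}_{g,K,S}'$: the commutative factors need no checking, and a quaternionic factor $M_{n_i}(\mathcal{O}_i')$ is isomorphic to $M_{n_i}$ of whichever conjugacy-class representative in $\widetilde{\mathcal{R}}_{D_i',K,S}'$ the order $\mathcal{O}_i'$ is conjugate to. The one genuinely delicate point --- and the reason one must first pass to the $K$-simple factors rather than apply the sub-lemma to $A$ itself --- is exactly this last matching: a maximal order of $M_n(D')$ is merely Morita-equivalent to, and in general \emph{not} isomorphic to, a matrix ring $M_n(\mathcal{O})$ over a maximal order $\mathcal{O}$ of $D'$ (it has the form $\End_{\mathcal{O}}(L)$ for a rank-$n$ projective $\mathcal{O}$-module $L$, and $\widetilde{\mathcal{R}}_{D,K,S}'$ was deliberately defined using only $L = \mathcal{O}^{\oplus n}$), whereas over a \emph{division} algebra every maximal order is conjugate, hence isomorphic as a ring, to one of the finitely many listed representatives. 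Everything else is routine bookkeeping with the definitions.
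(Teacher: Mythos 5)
Your proof is correct, and while the overall strategy coincides with the paper's (decompose into $K$-simple factors, enlarge the endomorphism order to a maximal one by a $K$-isogeny, reassemble, and match against the definition of $\widetilde{\mathcal{R}}_{g,K,S}'$), the implementation of the key enlargement step is genuinely different. The paper realizes the enlargement as a Serre tensor product $A\otimes_{\End_K(A)}\o$, and since the relevant order is noncommutative in the quaternionic case it has to construct this by hand: complex uniformization $\C^g/\Lambda\mapsto \C^g/(\o\cdot\Lambda)$, a representability argument for the functor $R\mapsto A(R)\otimes_{\o'}\o$, and Galois descent to get the object down to $K$. You instead quotient by the explicit subgroup $G=\bigcap_{\phi\in m\mathcal{O}'}\ker\phi$, which is visibly finite (it sits in $C[m]$) and visibly $K$-rational (it is cut out by $K$-endomorphisms), and then verify on Tate modules that $\mathcal{O}'$ preserves $T_\ell(C')=\{v\in V_\ell(C):(m\mathcal{O}')v\subseteq T_\ell(C)\}$, using the standard fact that an element of $\End_K^0(C')$ stabilizing all Tate modules is an actual endomorphism and maximality of $\mathcal{O}'$ to force equality. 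This stays over $K$ throughout, treats the commutative and quaternionic cases uniformly, and sidesteps entirely the representability-and-descent discussion that occupies most of the paper's proof; what it gives up is only the functorial description of $B$, which the paper never uses beyond existence. Your closing remark about why one must enlarge on the simple factors (a maximal order of $M_n(D')$ need not be isomorphic to $M_n(\mathcal{O})$, whereas any maximal order of the division algebra $D'$ is conjugate, hence isomorphic, to one of the finitely many chosen representatives) is accurate and is exactly the point that makes the final membership $\End_K(B)\in\widetilde{\mathcal{R}}_{g,K,S}'$ work; one could streamline it slightly by choosing each $\mathcal{O}_i'$ to be one of the paper's fixed conjugacy-class representatives from the outset.
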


\begin{proof}
By construction of $\widetilde{\mathcal{R}}_{g,K,S}'$ it suffices to show this for $A/K$ $K$-simple. When $\End_K(A) = \End_{\Qbar}(K)$ is commutative, this follows from the usual Serre tensor product construction: writing $F := \End_K^0(A)$, automatically $\End_K(A)\subseteq \o_F$ is an order, and $A\sim_K A\otimes_{\End_K(A)} \o_F$ as desired.

The only remaining case is the case of $\End_K^0(A)\iso D\in \mathcal{R}_{g,K,S}^0$ a quaternion algebra. By changing the isomorphism $D\iso \End_K^0(A)$ via an automorphism of $D$ without loss of generality $\End_K(A)\inj \o$ with $\o\in \mathcal{R}_{g,K,S}'$ one of our chosen maximal orders.

Now we again note that $A\sim_K A\otimes_{\End_K(A)} \o$, and we are done, modulo a discussion of the Serre tensor product in this context.

Now the Serre tensor product is standard for modules over a \emph{commutative} ring --- see e.g.\ Proposition $1.7.4.4$ of Chai-Conrad-Oort's \cite{chai-conrad-oort}. However in this situation the relevant ring is noncommutative, and there does not seem to be a treatment in the literature of the construction $A\mapsto A\otimes_{\o'} \o$ for $\o'\subseteq \o$ an order. But it too is easy since we are over a number field: embed into $\C$ and write $A\iso \C^g/\Lambda$ with $\o'\cdot \Lambda = \Lambda$ and $\o\actson \C^g$ via the standard representation ($\o$ is an order in a quaternion algebra over a number field of degree $\frac{g}{2}$ in our situation, whence it naturally acts on $(\C^{\oplus 2})^{\oplus \frac{g}{2}}$). Then the lattice $\o\cdot \Lambda$ contains $\Lambda$ with finite index, whence $A/\C$ is $\C$-isogenous to $A' := \C^g/(\o\cdot \Lambda)$, whence $A'$ is defined over $\Qbar$, whence over a number field $L/K$. But now $A'/L$ represents the functor $R\mapsto A(R)\otimes_{\o'} \o$ over $L$, and so it is uniquely determined up to unique isomorphism. Therefore by Galois descent it is the base change of a variety over $K$, whence we have shown that the functor $R\mapsto A(R)\otimes_{\o'} \o$ over $K$ is representable (by $A'/K$), producing our desired Serre tensor product (which evidently has endomorphisms by $\o$ because e.g.\ it does over $\C$ and $\End_K^0(A') = \End_K^0(A) = \End_{\Qbar}^0(A) = \End_{\Qbar}^0(A')$).
\end{proof}

\subsection{Proof of Theorem \ref{a priori isogeny estimate for GL2-type abelian varieties}.}
Now we may prove Theorem \ref{a priori isogeny estimate for GL2-type abelian varieties}.

\begin{proof}[Proof of Theorem \ref{a priori isogeny estimate for GL2-type abelian varieties}.]
We first reduce to the case when $A/K$ is split semistable and has all its geometric endomorphisms defined over $K$, i.e.\ at every prime of $K$ either $A$ has good reduction or else has split (i.e.\ the relevant torus is split) semistable bad reduction, and also that $\End_K(A) = \End_{\Qbar}(A)$. Let $K'/K$ be the explicit finite Galois extension produced by the proof of Lemma \ref{everything happens over an explicit finite extension}. By Lemma \ref{everything happens over an explicit finite extension}, $A/K'$ is split semistable and has all its geometric endomorphisms defined over $K'$.

Thus in order to show the claimed reduction we need only show that if $\Hom_{K'}(A,B)$ is generated as an abelian group by $K'$-isogenies of degree $\ll_{g,K,S} 1$, then $\Hom_K(A,B)$ is generated as an abelian group by $K$-isogenies of degree $\ll_{g,K,S} 1$. This follows because the (evidently idempotent and thus surjective) $\Z$-linear projection $\pi: \Hom_{K'}^0(A,B)\to \Hom_K^0(A,B)$ via $\phi\mapsto \frac{1}{[K':K]}\sum_{\sigma\in \Gal(K'/K)} \sigma(\phi)$ has the property that $\deg{\left([K':K]\cdot \pi(\phi)\right)}\ll_{g,K,S,\phi} 1$, and thus the image of $[K':K]\cdot \pi: \Hom_{K'}(A,B)\to \Hom_K(A,B)$, which contains $[K':K]\cdot \Hom_K(A,B)$, has index $\ll_{g,K,S} 1$ and is generated as an abelian group by $K$-isogenies of degree $\ll_{g,K,S} 1$. We conclude the reduction by noting that $\Hom_K(A,B)$ is free abelian of rank $\ll_g 1$.

So we conclude that we may simply prove the claim for $A,B/K'$ instead. Thus, replacing $K$ by $K'$, we have achieved the desired reduction: without loss of generality $A/K$ is split semistable and has all its geometric endomorphisms defined over $K$.

Now let us reduce to the case of $A/K$ with "maximal" endomorphism ring. By Proposition \ref{everybody is isogenous to someone with maximal endomorphisms}, there is an $A'/K$ with $A\sim_K A'$ (and thus $A'\sim_K B$) and $\End_K(A')\in \widetilde{\mathcal{R}}_{g,K,S}'$.

By Lemma \ref{reversal of arrows}, without loss of generality we may take $A = A'$ (indeed, if we prove the theorem in this case then we produce $K$-isogenies $A'\to A$ and $A'\to B$ of degree $\ll_{g,K,S} 1$, whence by Lemma \ref{reversal of arrows} a diagram $A\to A'\to B$ with each map a $K$-isogeny of degree $\ll_{g,K,S} 1$).

So we have ensured that $\End_K(A)\in \widetilde{\mathcal{R}}_{g,K,S}'$ as well.

Now let us complete the proof.

In the non-isotypic case we are done\footnote{We could treat this case in a way more akin to the rest of the argument and thus avoid using another isogeny estimate like Theorem \ref{masser-wustholz}, but we will not bother.} --- if $A\simeq A_1^{\times n_1}\times A_2^{\times n_2}$ with each $A_i/K$ $K$-simple and admitting sufficiently many complex multiplications over $K$, then $\End_K(A_i)$ is the maximal order in $\End_K^0(A_i)$ and so $h(A_i)\ll_{g,K} 1$ because we may even construct $A_i/\C$. Therefore $h(A)\ll_{g,K} 1$ and so the theorem follows from e.g.\ Theorem \ref{masser-wustholz}.

Otherwise we have that $A\simeq \widetilde{A}^{\times n}$ is isotypic.

Let us first treat the case of $\End_K^0(A)\simeq M_n(F)\in \widetilde{\mathcal{R}}_{g,K,S}^0$ with $F/\Q$ a CM field of degree $g = [F:\Q]$. Thus by construction $\End_K(\widetilde{A})\simeq \o_F$ and $\End_K(A)\simeq M_n(\o_F)$ under this identification.

So now let $\phi: A\to B$ be a $K$-isogeny, and let $G := \ker{\phi}$ be its kernel. Our task is to show that there is a $\gamma\in \End_K(A)\simeq M_n(\o_F)$ with $\ker{\gamma}\supseteq G$ and $[\ker{\gamma} : G]\ll_{g,K,S} 1$, because then $\gamma: A\to A$ factors as $A\xrightarrow{\phi} B\xrightarrow{\phi'} A$, and $\deg{\phi'}\ll_{g,K,S} 1$.

To produce such a $\gamma\in M_n(\o_F)$ we do the following. Write $G =: \bigoplus_p \widetilde{G}_p$. Let $G_p := M_2(\o_{F,p})\cdot \widetilde{G}_p$ (with $M_2(\o_{F,p})$ acting diagonally on $T_p(A) = T_p(\widetilde{A})^{\oplus n}$). Then by Lemmas \ref{large prime lemma} (for $p\gg_{g,K,S} 1$) and \ref{small prime lemma} (for $p\ll_{g,K,S} 1$) we have that $$\prod_p [G_p : \widetilde{G}_p]\ll_{g,K,S} 1.$$

Let $N_p\in \Z^+$ be such that $G\subseteq A[p^{N_p}]$. Let $\Gamma_p\subseteq T_p(A)$ be the preimage of $G\subseteq A[p^{N_p}]\simeq T_p(A)/p^{N_p}$ in $T_p(A)$. Of course $\Gamma_p\subseteq T_p(A)$ is of finite index, and $M_2(\o_{F,p})\cdot \Gamma_p = \Gamma_p$.

Recall that to define $\rho_{\widetilde{A},p}: \Gal(\Qbar/K)\to \GL_2(\o_{F,p})$ we implicitly chose an isomorphism $T_p(\widetilde{A})\iso \o_{F,p}^{\oplus 2}$ as $\o_{F,p}$-modules (this is possible because $\o_{F,p}$ is a direct sum of principal ideal domains). Thus $T_p(A) = T_p(\widetilde{A})^{\oplus n}\iso (\o_{F,p}^{\oplus 2})^{\oplus n}$ under this choice.

Consequently we may regard $\Gamma_p\subseteq (\o_{F,p}^{\oplus 2})^{\oplus n}$ as an $M_2(\o_{F,p})$-submodule. Let $e_1 := \diag(1,0)\in M_2(\o_{F,p})$. Let $\widetilde{\Gamma}_p := e_1\cdot \Gamma_p$, regarded as an $\o_{F,p}$-submodule of $\o_{F,p}^{\oplus n}\iso \o_{F,p}^{\oplus n}\oplus 0$.

Then because $M_2(\o_{F,p})\cdot \Gamma_p = \Gamma_p$ it follows that $\Gamma_p = \widetilde{\Gamma}_p^{\oplus 2}$ (since $\diag(0,1) = \twobytwo{0}{1}{1}{0}\cdot e_1\cdot \twobytwo{0}{1}{1}{0}$).

Now because $\o_{F,p}$ is a direct sum of principal ideal domains it follows that $\widetilde{\Gamma}_p$ is free (thus of rank $n$) and so there is an $\alpha_p\in M_n(\o_{F,p})$ such that $\widetilde{\Gamma}_p = \alpha_p\cdot \o_{F,p}^{\oplus n}$. Thus $\Gamma_p = \alpha_p\cdot T_p(A)$.

Because $p^{N_p}\cdot T_p(A)\subseteq \Gamma_p$ by definition, it follows that $\beta_p := p^{N_p}\cdot \alpha_p^{-1}$ preserves $T_p(A)$ and thus $\beta_p\in M_n(\o_{F,p})$ satisfies $\beta_p\cdot \alpha_p = p^{N_p}\cdot \id$.

Therefore (since $\Gamma_p = \alpha_p\cdot T_p(A)$) we have found that $\beta_p\cdot G_p = 0$ and indeed $G_p = \ker{(\beta_p\actson A[p^\infty])}$. Of course this same property would hold were we to replace $(\alpha_p, \beta_p)$ by $(\alpha_p\cdot g^{-1}, g\cdot \beta_p)$ with $g\in \GL_n(\o_{F,p})$.

Collecting these cosets $\GL_n(\o_{F,p})\cdot \beta_p$ over all primes $p$, we have produced an element $(\beta_p)_p\in \left(\prod_p \GL_n(\o_{F,p})\right)\backslash \GL_n(\o_F\otimes_\Z \A_{\Q}^{\text{fin.}}) / \GL_n(\o_F\otimes_\Z \Q)$. By Minkowski (this is just the class group of $F$) it follows that there is a $\gamma\in M_n(\o_F)$ such that $\ker{(\gamma\actson A[p^\infty])}\supseteq G_p$ for all $p$ and $$\prod_p [\ker{(\gamma\actson A[p^\infty])} : G_p]\ll_{g,K,S} 1.$$

Hence $\ker{\gamma}\supseteq G$ and $[\ker{\gamma} : G]\ll_{g,K,S} 1$ as desired.

So all that is left to be done is to deal with the case of quaternionic multiplication, i.e.\ the case when $\End_K^0(A)\simeq M_n(D)\in \mathcal{R}_{g,K,S}^0$ with $D/F$ a quaternion algebra over a CM field $F/\Q$ with $[F:\Q] = \frac{g}{2}$. Thus by construction $\End_K^0(\widetilde{A})\simeq D$, $\End_K(\widetilde{A})\simeq \o$, and $\End_K(A)\simeq M_n(\o)$ with $\o\in \widetilde{\mathcal{R}}_{g,K,S}'$ a maximal order of $D$.

When $D$ is split at all primes of $F$ above $p$ the argument producing the coset $\GL_n(\o_p)\cdot \beta_p$ is exactly the same as above since $D_p\simeq M_2(F_p)$ and $\o_p\simeq M_2(\o_{F,p})$ under that identification. In particular this is the case for $p\gg_{g,K,S} 1$.

Otherwise $p\ll_{g,K,S} 1$ and we proceed as follows. Write $F_p\simeq \bigoplus_{\pfrak\vert (p)} F_\pfrak$, $D_p\simeq \bigoplus_{\pfrak\vert (p)} D_\pfrak$, and $\o_p\simeq \bigoplus_{\pfrak\vert (p)} \o_\pfrak$.

Note that again $\o_p$ is a direct sum of principal ideal domains, though now the summands are noncommutative.\footnote{If $D/F$ is ramified at $\pfrak$, then the unique maximal order $\o_\pfrak\subseteq D_\pfrak$ is the set of those elements of $D_\pfrak$ which have integral norm (as can easily be checked in the explicit basis arising from $D_\pfrak\simeq (\pi, u)_\pfrak$ as we used in Proposition \ref{the endomorphism algebra is one of an explicit finite set of possibilities}). Therefore it follows that the fractional ideals of $\o_\pfrak$ are simply $i^k\cdot \o_\pfrak$ with $i^2 = \pi$ the uniformizer of $\o_{F,\pfrak}$ and $k\in \Z$. In particular $\o_\pfrak$ is a noncommutative principal ideal domain.}

Again let $\phi: A\to B$ be a $K$-isogeny and let $G := \ker{\phi}$. Let $G =: \bigoplus_p \widetilde{G}_p$ and  $G_p := \o_p^\opp\cdot \widetilde{G}_p$ where the left action of $\o_p^\opp$ arises via $\o_p^\opp\subseteq D_p^\opp$ and the fact that $D_p^\opp\inj M_{2gn}(\Q_p)$ (acting by right multiplication on the $2gn$-dimensional $\Q_p$-vector space $D_p^{\oplus n}$) is the commutant of $\End_K^0(A)\simeq M_n(D_p)\inj M_{2gn}(\Q_p)$ (acting by left multiplication on said vector space).

By for example repeating the argument that gives Smith normal form ("Jacobson normal form") one finds that, for all ramified primes $\pfrak$ of $D/F$, all torsion-free finitely-generated modules over $\o_\pfrak$ are free (choose a generating set over $\Z_p$ and then a generating set over $\Z_p$ of the submodule of relations and apply Jacobson normal form to diagonalize the resulting matrix). In particular it follows that $T_\pfrak(\widetilde{A})\iso \o_\pfrak^{\opp}$ as $\o_\pfrak^{\opp}$-modules when $D/F$ is ramified at $\pfrak$, since $T_\pfrak(\widetilde{A})$ is a free $\o_\pfrak^\opp$-module by the above. (We already used this when writing $\rho_{\widetilde{A}, p}: \Gal(\Qbar/K)\to (\o_p^\opp)^\times$.)

Therefore we may simply repeat the same argument. Let $N_p\in \Z^+$ be such that $G\subseteq A[p^{N_p}]\simeq T_p(A)/p^{N_p}$ and let $\Gamma_p\subseteq T_p(A)\simeq T_p(\widetilde{A})^{\oplus n}$ be the preimage of $G$ in $T_p(A)$ as before. Thus $\Gamma_p$ is a torsion-free finitely-generated $\o_p^\opp$-module, whence free (automatically of rank $n$), whence there is again an $\alpha_p\in M_n(\o_p)$ such that $\Gamma_p = \alpha_p\cdot T_p(A)$. We again produce $\beta_p\in M_n(\o_p)$ such that $\beta_p\cdot \alpha_p = p^{N_p}\cdot \id$ and then by Minkowski (this is a very explicit case of Borel-Harish-Chandra) it follows after considering the class $(\beta_p)_p\in \left(\prod_p \GL_n(\o\otimes_\Z \Z_p)\right)\backslash \GL_n(\o\otimes_\Z \A_{\Q}^{\text{fin.}}) / \GL_n(\o\otimes_\Z \Q)$ that there is a $\gamma\in M_n(\o)$ such that $\ker{\gamma}\supseteq G$ and $[\ker{\gamma} : G]\ll_{g,K,S} 1$, exactly as before.

We are done.
\end{proof}

\section{Proof of Theorem \ref{the endomorphism ring is one of an explicit finite set of possibilities}.}

\begin{proof}[Proof of Theorem \ref{the endomorphism ring is one of an explicit finite set of possibilities}.]
Apply Theorem \ref{a priori isogeny estimate for GL2-type abelian varieties} with $B = A$.\footnote{Alternatively, combine Theorem \ref{a priori isogeny estimate for GL2-type abelian varieties}, Lemma \ref{everything happens over an explicit finite extension}, and Propositions \ref{the endomorphism algebra is one of an explicit finite set of possibilities} and \ref{everybody is isogenous to someone with maximal endomorphisms} to upper bound the index of $\End_K(A)$ in one of an explicit finite set of rings.}
\end{proof}

\section{Proof of Theorem \ref{upper bound on the number of GL2-type abelian varieties}.}

\begin{proof}[Proof of Theorem \ref{upper bound on the number of GL2-type abelian varieties}.]
The number of $K$-isogeny classes is $\ll_{g,K,S} 1$ by Lemma \ref{faltings' lemma} and Faltings' proof of the Tate conjecture for homomorphisms of abelian varieties. Each $K$-isogeny class contains $\ll_{g,K,S} 1$ abelian varieties by Theorem \ref{a priori isogeny estimate for GL2-type abelian varieties}. The theorem follows.
\end{proof}

\section{Proof of Corollary \ref{upper bound on the number of S-integral K-points on a Hilbert modular variety}.}

\begin{proof}[Proof of Corollary \ref{upper bound on the number of S-integral K-points on a Hilbert modular variety}.]
It suffices (by Chevalley-Weil) to bound the number of $\o_{K,S}$-points on the Hilbert modular variety with full level-$10^{10}!$ structure (so that we are actually bounding the number of points on a variety). By Theorem \ref{upper bound on the number of GL2-type abelian varieties} the only things to bound are the number of embeddings $\o\inj \o'$ with $\o'\in \mathcal{R}_{g,K,S}$ (notation as in Theorem \ref{the endomorphism ring is one of an explicit finite set of possibilities}) up to conjugation by $\o'^\times$, and, given $\o\inj \End_K(A)$, the number of $\o$-linear polarizations $A\otimes_{\o} \afrak\simeq A^*$ with $\afrak\in \Cl(\o)$. The latter is easy. The former is straightforward and follows from e.g.\ an easily effectivized case of Borel--Harish-Chandra \cite{borel-harish-chandra} (use e.g.\ the usual generalization of Eichler's trace formula for optimal embeddings).
\end{proof}

\section{Proof of Theorem \ref{serre open image theorem for GL2-type abelian varieties}.\label{proof of serre open image theorem section}}

Let us now prove Theorem \ref{huge image after some point}.\footnote{Because we chose to state Theorem \ref{huge image after some point} in a previous section (namely immediately after Lemma \ref{irreducibility after some point} in Section \ref{irreducibility after some point section}), it is important to note that no preceding results depend on Theorem \ref{huge image after some point}, and so we are e.g.\ free to use Proposition \ref{large prime lemma} in the below proof.}

\begin{proof}[Proof of Theorem \ref{huge image after some point}.]
Let $L/K$ be the compositum of all quadratic extensions of $K$ which are unramified outside $S$ --- $L/K$ is an explicit finite extension by Minkowski's proof of the Hermite-Minkowski theorem. We take $C_{g,K,N}$ to be the constant produced by Proposition \ref{large prime lemma} applied to $(g,L,S)$, where $S$ is the finite set of all places of $L$ which divide $N$. Let then $p\geq C_{g,K,N}$.

By Lemma $3.9$ of our \cite{my-first-effective-mordell-paper} aka Lemma \ref{irreducibility after some point} above, there is a $g\in \GL_2(\o_E/\pfrak)$ and a subfield $\F_q\subseteq \o_E/\pfrak$ such that $$g\cdot \SL_2(\F_q)\cdot g^{-1}\subseteq \overline{\rho}_{A,\pfrak}(\Gal(\Qbar/K))\subseteq (\o_E/\pfrak)^\times\cdot (g\cdot \GL_2(\F_q)\cdot g^{-1}),$$ so that all there is to show is that $\F_q = \o_E/\pfrak$. Of course by changing basis (i.e.\ replacing $\rho_{A,\pfrak}$ by $g^{-1}\cdot \rho_{A,\pfrak}\cdot g$) without loss of generality $g = \id$.

Let $\sigma\in \Gal((\o_E/\pfrak)/\F_q)\subseteq \Gal((\o_E/\pfrak)/\F_p)\simeq \Gal(E_\pfrak/\Q_p)$. Regarding $\sigma\in \Gal(E_\pfrak/\Q_p)$, let $\rho_{A,\pfrak}^\sigma := \sigma\circ \rho_{A,\pfrak}$ be the conjugate representation.

Now for each $g\in \Gal(\Qbar/K)$ there is a $\lambda_g\in (\o_E/\pfrak)^\times$ and an $f(g)\in \GL_2(\F_q)$ such that $\rhobar_{A,\pfrak}(g) = \lambda_g\cdot f(g)\pmod*{\pfrak}$. (Note that $g\mapsto \lambda_g$ is well-defined modulo $\F_q^\times$.) Because $\lambda_g^2\cdot \det{f(g)} = \det{\rhobar_{A,\pfrak}(g)} = \chi_p(g)\in \F_p^\times$, it follows that, for all $g\in \Gal(\Qbar/K)$, $\lambda_g^2\in \F_q^\times$.

Therefore in particular $\sigma(\lambda_g) = \pm \lambda_g$ for all $g\in \Gal(\Qbar/K)$. It follows that, for each $g\in \Gal(\Qbar/K)$, there is a unique $\eps(g)\in \{\pm 1\}$ such that $\sigma(\rhobar_{A,\pfrak}(g))\cdot \rhobar_{A,\pfrak}(g)^{-1}\ = \eps(g)\cdot \id\pmod*{\pfrak}$. Since $\lambda_{gh}\equiv \lambda_g\cdot \lambda_h\pmod*{\F_q^\times}$, it follows that $\eps: \Gal(\Qbar/K)\to \{\pm 1\}$ is a character.

We claim that $\eps$ is unramified outside $S$. This amounts to the statement that $\eps\vert_{I_\qfrak} = \triv$ for $\qfrak\subseteq \o_K$ with $\qfrak\vert (p)$ a prime of $K$ above $p$ (it is automatic for all other residue characteristics). Of course (since wild inertia is pro-$p$) $\eps$ is at most tamely ramified at $\qfrak$. It then follows that if $\eps\vert_{I_\qfrak}\neq \triv$ then $\eps\vert_{I_\qfrak}\equiv \chi_p\vert_{I_\qfrak}^{\frac{p-1}{2}}\pmod*{\pfrak}$. So we need only rule this latter possibility out.

Because $\rhobar_{A,\pfrak}^\sigma\simeq \rhobar_{A,\pfrak}\otimes \eps$ and both $\rhobar_{A,\pfrak}\vert_{\Gal(\Qbar_p/K_\qfrak)}$ and $\rhobar_{A,\pfrak}^\sigma\vert_{\Gal(\Qbar_p/K_\qfrak)}$ correspond to Galois modules which prolong to finite flat group schemes over $\o_{K,\qfrak}$, it follows from Corollary $3.4.4$ of Raynaud's \cite{raynaud-pp} (or else the results of Fontaine-Laffaille we used in the proof of Lemma $3.9$ of \cite{my-first-effective-mordell-paper}) that $\eps\vert_{I_\qfrak}\not\equiv \chi_p\vert_{I_\qfrak}^{\frac{p-1}{2}}\pmod*{\pfrak}$, since fundamental characters can only occur with multiplicity at most $1$. Indeed, writing e.g.\ $\alpha: I_\qfrak\to \Fbar_p^\times$ for a character occurring in $\rhobar_{A,\pfrak}\vert_{I_\qfrak}\otimes_{\o_E/\pfrak} \Fbar_p$ (which semisimplifies as an $I_\qfrak$-representation to a sum of characters because $I_\qfrak^{\mathrm{tame}}$ is abelian and pro-prime-to-$p$), by Corollary $3.4.4$ of Raynaud's \cite{raynaud-pp} $\alpha$ is a multiplicity-free product of fundamental characters of $I_\qfrak^{\mathrm{tame}}$ --- and, applying the same reasoning to $(\rhobar_{A,\pfrak}\otimes \eps)\vert_{I_\qfrak}\otimes_{\o_E/\pfrak} \Fbar_p$, so is $\alpha\cdot \eps\vert_{I_\qfrak}$, whence $\eps\vert_{I_\qfrak}$ cannot be $\chi_p\vert_{I_\qfrak}^{\frac{p-1}{2}}\pmod*{\pfrak}$.

So $\eps$ is unramified outside $S$. Therefore $\eps\vert_{\Gal(\Qbar/L)} = \triv$ by definition of $L/K$.

So we have found that, for all $\sigma\in \Gal((\o_E/\pfrak)/\F_q)$, $\sigma(\rhobar_{A,\pfrak}(g)) = \rhobar_{A,\pfrak}(g)$ for all $g\in \Gal(\Qbar/L)$. In other words $\rhobar_{A,\pfrak}(\Gal(\Qbar/L))\subseteq \GL_2(\F_q)$, and so $\F_p[\rhobar_{A,\pfrak}(\Gal(\Qbar/L))]\subseteq M_2(\F_q)$.

However by Proposition \ref{large prime lemma} we have that $\Z_p[\rho_{A,\pfrak}(\Gal(\Qbar/L))] = M_2(\o_{E,\pfrak})$, whence $\F_p[\rhobar_{A,\pfrak}(\Gal(\Qbar/L))] = M_2(\o_E/\pfrak)$, whence $\F_q = \o_E/\pfrak$ as desired.
\end{proof}

Having proven Theorem \ref{huge image after some point}, we may now prove Theorem \ref{serre open image theorem for GL2-type abelian varieties}.

\begin{proof}[Proof of Theorem \ref{serre open image theorem for GL2-type abelian varieties}.]
Let $\text{\^{\i}}_{g,K,S}$ be the constant produced by Theorem \ref{huge image after some point} on input $(g,K,\prod_{\pfrak\in S} \Nm\,{\pfrak})$.\footnote{Note that, by e.g.\ Theorem \ref{the endomorphism ring is one of an explicit finite set of possibilities}, without loss of generality all $p\geq \text{\^{\i}}_{g,K,S}$ are prime to the discriminant of $\o$, whence e.g.\ $\o$ is maximal at all primes above such $p$.} Let $\widetilde{H} := \left(\prod_{p\geq \text{\^{\i}}_{g,K,S}} \rho_{A,p}\right)(\Gal(\Qbar/K))\subseteq \prod_{p\geq \text{\^{\i}}_{g,K,S}} G_\o(\Z_p) =: \widetilde{G}$. Of course $\widetilde{H}$ is compact and thus $\widetilde{H}\subseteq \widetilde{G}$ is closed ($\widetilde{G}$ is Hausdorff), and $\det{\widetilde{H}} = \det{\widetilde{G}} = \prod_{p\geq \text{\^{\i}}_{g,K,S}} \Z_p^\times$ since each $p\geq \text{\^{\i}}_{g,K,S}$ is unramified in $K$ and so $K$ and $\Q(\{\zeta_n : (n, \text{\^{\i}}_{g,K,S}!) = 1\})$ are linearly disjoint. So it suffices to show that $H = G$, where $H := \widetilde{H}\cap \ker{(\det)}$ and similarly $G := \widetilde{G}\cap \ker{(\det)} = \prod_{p\geq \text{\^{\i}}_{g,K,S}}\prod_{\pfrak\vert (p)} \SL_2(\o_\pfrak)$ (whence automatically $H\subseteq G$).

Let $H_\pfrak := \rho_{A,\pfrak}(\Gal(\Qbar/K))\cap \ker{(\det)}\subseteq \SL_2(\o_\pfrak) =: G_\pfrak$. Note that Theorem \ref{huge image after some point} amounts to the statement that $H_\pfrak\pmod*{\pfrak} = G_\pfrak\pmod*{\pfrak}$ (since $\SL_2(\o/\pfrak)$ is its own commutator subgroup).

We claim that then $H_\pfrak = G_\pfrak$.

Indeed if $g\in G_\pfrak$, then let $h_0\in H_\pfrak$ be such that $h\equiv g\pmod*{\pfrak}$. On replacing $g$ by $h_0^{-1}\cdot g$, without loss of generality $g\equiv \id\pmod*{\pfrak}$. Now we proceed as in Serre's proof of Lemma $3$ on page $\text{IV-}23$ of his \cite{serre-abelian-l-adic-representations-and-elliptic-curves}.

Because $p\geq \text{\^{\i}}_{g,K,S}$ is so large, it is unramified in $\o$. Write $g - \id =: p\cdot (u + v + w)$ with $u$ a multiple of $\twobytwo{1}{-1}{1}{-1}$ modulo $\pfrak$, $v$ strictly lower triangular modulo $\pfrak$, and $w$ strictly upper triangular modulo $\pfrak$ (this is possible because $\tr(g - \id)\equiv 0\pmod*{\pfrak^2}$ since $\det{g} = 1$). Let $h_u, h_v, h_w\in H_\pfrak = \SL_2(\o/\pfrak)$ be such that $h_u\equiv \id + u\pmod*{\pfrak}$, $h_v\equiv \id + v\pmod*{\pfrak}$, and $h_w\equiv \id + w\pmod*{\pfrak}$. Let $h_1 := h_u^p\cdot h_v^p\cdot h_w^p$. Thus $h_1\equiv \id + p\cdot (u + v + w)\equiv g\pmod*{\pfrak^2}$. Thus $H_\pfrak\pmod*{\pfrak^2} = G_\pfrak\pmod*{\pfrak^2}$ and so on replacing $g$ by $h_1^{-1}\cdot g$ without loss of generality $g\equiv \id\pmod*{\pfrak^2}$.

Now if $H_\pfrak\pmod*{\pfrak^n} = G_\pfrak\pmod*{\pfrak^n} = \SL_2(\o/\pfrak^n)$ and $g\equiv \id\pmod*{\pfrak^n}$ with $n\geq 2$, then write $g\equiv \id + p^n\cdot z\pmod*{\pfrak^{n+1}}$ (thus $\tr(z)\equiv 0\pmod*{\pfrak}$) and let $h_z\in H_\pfrak$ be such that $h_z\equiv \id + p^{n-1}\cdot z\pmod*{\pfrak^n}$. Letting $h_n := h_z^p$ we see that $h_n\equiv \id + p^n\cdot z\pmod*{\pfrak^{n+1}}$, and so $H_\pfrak\pmod*{\pfrak^{n+1}} = G_\pfrak\pmod*{\pfrak^{n+1}}$ and we may replace $g$ by $h_n^{-1}\cdot g$ to continue the induction.

Therefore we see that $H_\pfrak\pmod*{\pfrak^n} = G_\pfrak\pmod*{\pfrak^n}$ for all $n\in \N$, and so because $H_\pfrak\subseteq G_\pfrak$ is closed it follows that $g\in H_\pfrak = G_\pfrak$, whence the claim.

Next let $H_p := \rho_{A,p}(\Gal(\Qbar/K))\cap \ker{(\det)}\subseteq \SL_2(\o_p) =: G_p$. We claim that $H_p = G_p$.

Let us first see that it suffices to show that $H_p\pmod*{p} = G_p\pmod*{p}$. Indeed we may simply repeat the above aka "take the product over all $\pfrak\vert (p)$ of the above argument": if $H_p\pmod*{p} = G_p\pmod*{p} = \SL_2(\o/p)$, then, given a $g\in G_p\pmod*{p^2}$ with $g\equiv \id\pmod*{p}$, write $g =: \id + p\cdot (u + v + w)$ with $u\in (\o/p)\cdot \twobytwo{1}{-1}{1}{-1}$, $v\in (\o/p)\cdot \twobytwo{0}{0}{1}{0}$, $w\in (\o/p)\cdot \twobytwo{0}{1}{0}{0}$, which is again possible because $\det{g} = 1\in \o_p^\times$ and so $\tr(g - \id)\equiv 0\pmod*{p^2}$. Let $h_u$, $h_v$, $h_w\in H_p$ be such that $h_u\equiv \id + u\pmod*{p}$ and similarly for $h_v$ and $h_w$. Let $h_0 := h_u^p\cdot h_v^p\cdot h_w^p\in H_p$ and note that $h_0\equiv \id + p\cdot (u + v + w)\pmod*{p^2}$ and so $H_p\pmod*{p^2} = G_p\pmod*{p^2}$. The induction step also follows verbatim.

So it suffices to show that $H_p\pmod*{p} = G_p\pmod*{p}$.

To see this write $\{\pfrak_1, \ldots, \pfrak_m\} := \{\pfrak\subseteq \o : \pfrak\vert (p)\}$ and let $H_p^{(i)} := \left(\prod_{j=1}^i \rho_{A,\pfrak_j}\pmod*{\pfrak_j}\right)(\Gal(\Qbar/K))\cap \ker{(\det)}\subseteq \prod_{j=1}^i \SL_2(\o/\pfrak_i) =: G_p^{(i)}$. Thus in particular we have already seen that $H_p^{(1)} = G_p^{(1)}$, and our claim, namely that $H_p\pmod*{p} = G_p\pmod*{p}$, amounts to the statement that $H_p^{(m)} = G_p^{(m)}$ (remember that $p$ is unramified).

Now if $H_p^{(i)} = G_p^{(i)}$, then since $H_p^{(i+1)}\subseteq G_p^{(i+1)} = G_p^{(i)}\times \SL_2(\o/\pfrak_{i+1})$ and $H_p^{(i+1)}$ surjects onto both factors (since $H_{\pfrak_{i+1}}\pmod*{\pfrak_{i+1}} = G_{\pfrak_{i+1}}\pmod*{\pfrak_{i+1}}$), by Goursat's Lemma it follows that $H_p^{(i+1)}$ is the preimage of the graph of an isomorphism $G_p^{(i)}/N\iso \SL_2(\o/\pfrak_{i+1})/N'$ with $N\subseteq G_p^{(i)}$ and $N'\subseteq \SL_2(\o/\pfrak_{i+1})$ normal.

If $N' = \SL_2(\o/\pfrak_{i+1})$ then $N = G_p^{(i)}$ and so $H_p^{(i+1)} = G_p^{(i+1)}$ as desired, so that we may continue the induction. Otherwise because $\PSL_2(\o/\pfrak_{i+1})$ is simple it follows by considering the normal subgroup $(\{\pm \id\}\cdot N')/\{\pm \id\}\subseteq \PSL_2(\o/\pfrak_{i+1})$ that either $N'\subseteq \{\pm \id\}$, or else $\{\pm \id\}\cdot N' = \SL_2(\o/\pfrak_{i+1})$. The latter implies that $N' = \SL_2(\o/\pfrak_{i+1})$ because e.g.\ $\SL_2(\o/\pfrak_{i+1})$ is its own commutator subgroup, a contradiction. So we deduce that $N'\subseteq \{\pm \id\}$.

Thus $G_{\pfrak_{i+1}}/N'$ is either $\PSL_2(\o/\pfrak_{i+1})$ or $\SL_2(\o/\pfrak_{i+1})$. Now consider the surjection $G_p^{(i)} = \prod_{j=1}^i \SL_2(\o/\pfrak_j)\surj G_p^{(i)}/N\iso \SL_2(\o/\pfrak_{i+1})/N'$. As we have seen the only nonzero quotients of $\SL_2(\o/\pfrak_j)$ are either $\PSL_2(\o/\pfrak_j)$ or $\SL_2(\o/\pfrak_j)$.

The map $\prod_{j=1}^i \SL_2(\o/\pfrak_j)\surj \SL_2(\o/\pfrak_{i+1})/N'$ amounts to a product of maps $\SL_2(\o/\pfrak_j)\to \SL_2(\o/\pfrak_{i+1})/N'$ whose images commute and generate $\SL_2(\o/\pfrak_{i+1})/N'$. In particular at least one is nonzero. Let $k$ be minimal such that the map $\SL_2(\o/\pfrak_k)\to \SL_2(\o/\pfrak_{i+1})/N'$ is nonzero. It follows that e.g.\ the kernel of the map $\SL_2(\o/\pfrak_k)\to \SL_2(\o/\pfrak_{i+1})/N'$ is either trivial or else $\{\pm \id\}$.

Of course the induced map $\prod_{j=1}^i \SL_2(\o/\pfrak_j)\surj \PSL_2(\o/\pfrak_{i+1})$ is also surjective. Consider the induced map $\SL_2(\o/\pfrak_k)\to \SL_2(\o/\pfrak_{i+1})/N'\surj \PSL_2(\o/\pfrak_{i+1})$ --- it is also nonzero since $p\geq \text{\^{\i}}_{g,K,S}$ is large. Therefore it has image isomorphic to either $\SL_2(\o/\pfrak_k)$ or else $\PSL_2(\o/\pfrak_k)$, and in particular $p$ divides the order of its image. By Dickson's classification of subgroups of $\PSL_2(\o/\pfrak_{i+1})$, it follows that said image is either conjugate into a Borel, or else conjugate to a subgroup containing $\PSL_2(\F_p)$.

It cannot be conjugate into a Borel because otherwise $\PSL_2(\o/\pfrak_k)$ would be solvable, a contradiction. So it has a conjugate containing $\PSL_2(\F_p)$. Conjugating the map $\prod_{j=1}^i \SL_2(\o/\pfrak_j)\surj \PSL_2(\o/\pfrak_{i+1})$ shows that, for all $j\neq k$, the image of $\SL_2(\o/\pfrak_j)$ commutes with the standard $\PSL_2(\F_p)\subseteq \PSL_2(\o/\pfrak_{i+1})$ (since it commutes with the image of $\SL_2(\o/\pfrak_k)$), whence it is trivial. Consequently for $j\neq k$ the image of $\SL_2(\o/\pfrak_j)\to \SL_2(\o/\pfrak_{i+1})$ is trivial, since it must lie inside $\{\pm \id\}$ but $\SL_2(\o/\pfrak_j)$ is its own commutator subgroup.

We conclude that the surjective map $\prod_{j=1}^i \SL_2(\o/\pfrak_j)\surj \SL_2(\o/\pfrak_{i+1})/N'$ factors as projection onto the $k$-th factor composed with $\SL_2(\o/\pfrak_k)\surj \SL_2(\o/\pfrak_{i+1})/N'$. In particular $N = \left(\prod_{j=1}^{k-1} \SL_2(\o/\pfrak_j)\right)\times N''\times \left(\prod_{j=k+1}^{i+1} \SL_2(\o/\pfrak_j)\right)$ with $N''\subseteq \{\pm \id\}\subsetneq \SL_2(\o/\pfrak_k)$ normal.

Again consider $\SL_2(\o/\pfrak_k)\surj \PSL_2(\o/\pfrak_{i+1})$. Because the map is surjective and $\PSL_2(\o/\pfrak_{i+1})$ has trivial centre, it follows that the map factors through a surjection $\PSL_2(\o/\pfrak_k)\surj \PSL_2(\o/\pfrak_{i+1})$. Because both groups are simple it follows that this map must be an isomorphism. In particular $\o/\pfrak_k\iso \o/\pfrak_{i+1}$ as fields. Because the automorphisms of $\PSL_2(\o/\pfrak_k)$ are given by a composition of a field automorphism with conjugation (by an element of $\GL_2(\o/\pfrak_k)$), it follows that, up to precomposition with conjugation by an element of $\GL_2(\o/\pfrak_k)$, the map $\SL_2(\o/\pfrak_k)\surj \SL_2(\o/\pfrak_{i+1})/N'$ induces the map $\PSL_2(\o/\pfrak_k)\to \PSL_2(\o/\pfrak_{i+1})$ induced by a field isomorphism $\o/\pfrak_k\iso \o/\pfrak_{i+1}$.

In other words the map $\SL_2(\o/\pfrak_k)\surj \PSL_2(\o/\pfrak_{i+1})$ factors as $\SL_2(\o/\pfrak_k)\iso \SL_2(\o/\pfrak_{i+1})\surj \PSL_2(\o/\pfrak_{i+1})$, where the latter map induces the identity $\PSL_2(\o/\pfrak_{i+1})\to \PSL_2(\o/\pfrak_{i+1})$. Of course this then forces the latter map to be the canonical quotient. Note that we also have that a map $\SL_2(\F_q)\surj \SL_2(\F_q)$ which induces the identity $\PSL_2(\F_q)\simeq \PSL_2(\F_q)$ must also be the identity (for example by writing each element of $\SL_2(\F_q)$ as a product of commutators of elements of $\SL_2(\F_q)$).

So we finally conclude the following about our subgroup $H_p^{(i+1)}\subseteq \prod_{j=1}^{i+1} \SL_2(\o/\pfrak_j)$: there is a $g\in \GL_2(\o/\pfrak_{i+1})$, a $k\in \{1, \ldots, i\}$, and a field isomorphism $\phi: \o/\pfrak_k\simeq \o/\pfrak_{i+1}$ such that $$H_p^{(i+1)} = \left\{(m_j)_{j=1}^{i+1}\in \prod_{j=1}^{i+1} \SL_2(\o/\pfrak_j) : g\cdot m_{i+1}\cdot g^{-1} = \phi(m_k)\right\}.$$

But by Proposition \ref{large prime lemma} we know that $\F_p[H_p^{(i+1)}] = \prod_{j=1}^{i+1} M_2(\o/\pfrak_j)$, and the above subgroup evidently generates a proper $\F_p$-subalgebra of $\prod_{j=1}^{i+1} M_2(\o/\pfrak_j)$, a contradiction.

So we conclude that after all $N' = \SL_2(\o/\pfrak_{i+1})$, or in other words that $H_p^{(i+1)} = G_p^{(i+1)}$ and so we may continue the induction. Thus by induction $H_p^{(m)} = G_p^{(m)}$ as desired.

Consequently $H_p = G_p$. 

So now let us conclude by showing $H = G$. Since $H\subseteq G$ is closed, it suffices to show that, for all $\ell\geq \text{\^{\i}}_{g,K,S}$, $G_\ell\subseteq H$, where $G_\ell\iso G_\ell\times \prod_{\ell\neq p\geq \text{\^{\i}}_{g,K,S}} \{\id\}$ is regarded as a normal subgroup of $\prod_{p\geq \text{\^{\i}}_{g,K,S}} G_p$ in the evident way. Similarly regarding $H\cap G_\ell$ as a subgroup of $G_\ell$ (and a normal subgroup of $H$), evidently $H\cap G_\ell\subseteq H_\ell = G_\ell$, and to show that $G_\ell\subseteq H$ it suffices to show that $H\cap G_\ell = G_\ell$. As we have seen (in e.g.\ reducing proving $H_p = G_p$ to showing only that $H_p\pmod*{p} = G_p\pmod*{p}$), to show that $H\cap G_\ell = G_\ell$, it suffices to show that the canonical map $H\cap G_\ell\inj G_\ell\surj \prod_{\lambda\vert (\ell)} \SL_2(\o/\lambda)$ is surjective.

But because $H/(H\cap G_\ell)\inj \prod_{\ell\neq p\geq \text{\^{\i}}_{g,K,S}} G_p$, and because the right-hand side has no quotients of the form $\PSL_2(\F_q)$ with $q$ a power of $\ell$ (since the kernel of the canonical map $\SL_2(\o_\pfrak)\surj \SL_2(\o/\pfrak)$ is solvable via the filtration by principal congruence subgroups modulo the powers of $\pfrak$), it follows that indeed $H\surj H_\ell = G_\ell\surj \prod_{\lambda\vert (\ell)} \SL_2(\o/\lambda)$. Indeed the normal subgroup, say $N\subseteq H$, generated by $H\cap G_\ell$ and the kernel of $H\surj \prod_{\lambda\vert (\ell)} \SL_2(\o/\lambda)$ has the property that $H/N$ is a finite group which surjects upon no nontrivial simple groups (since $\prod_{\lambda\vert (\ell)} \SL_2(\o/\lambda)\surj H/N$ and the former only surjects upon $\PSL_2(\F_q)$ with $q$ a power of $\ell$, while also $\prod_{\ell\neq p\geq \text{\^{\i}}_{g,K,S}} G_p\surj H/N$, and we have seen that the former does not surject upon $\PSL_2(\F_q)$ with $q$ a power of $\ell$), whence it must be trivial.

So $N = H$, which is to say that the map $H\cap G_\ell\inj G_\ell\surj \prod_{\lambda\vert (\ell)} \SL_2(\o/\lambda)$ is surjective, as desired.

So $H\cap G_\ell = G_\ell$, whence $G_\ell\subseteq H$, whence for all $g\in \prod_{p\geq \text{\^{\i}}_{g,K,S}} G_p$ and for all $N\in \Z^+$ there is an $h_N\in H$ such that, for all $\text{\^{\i}}_{g,K,S}\leq p\leq N$, $h_N^{-1}\cdot g$ has trivial $p$-th coordinate, whence because $H\subseteq G$ is closed it follows that $g\in H$, so that $H = G$ as desired and we are done.
\end{proof}

\section{Proof of Corollary \ref{serre open image theorem for fake elliptic curves}.}

\begin{proof}[Proof of Corollary \ref{serre open image theorem for fake elliptic curves}.]
Let $X_{\o}(10^{10}!)/\Q$ be the corresponding Shimura curve with full $10^{10}!$-level structure. By Chevalley-Weil there is an explicit $L/K$ and a $\widetilde{P}\in X_{\o}(10^{10}!)(L)$ such that $\widetilde{P}\mapsto P$ under $X_{\o}(10^{10}!)\to X_{\o}$. The abelian variety corresponding to $\widetilde{P}$ is simply $A/L$, and the claim for $A/K$ follows from the claim for $A/L$ since $\Gal(\Qbar/L)\subseteq \Gal(\Qbar/K)$. But because all $10^{10}!$-torsion is defined over $L$ it follows that $A/L$ is semistable and so has good reduction everywhere (since $B\inj \End_L^0(A)$ does not act on $\G_m^{\times 2}$). We conclude by applying Lemma \ref{everything happens over an explicit finite extension} and then Theorem \ref{serre open image theorem for GL2-type abelian varieties}.
\end{proof}

\section{Proof of Corollary \ref{un peu d'effectivite for sirin curves}.}

\begin{proof}[Proof of Corollary \ref{un peu d'effectivite for sirin curves}.]
Given a \c{s}irin $C/K$ it is evident (via e.g.\ brute force) that there is an effectively computable \c{s}irin family $(C, K, L, \widetilde{C}, \phi, \pi)$. By Chevalley-Weil there is an explicit $\widetilde{L}/L$ such that all $P\in C(K)$ lift to a $\widetilde{P}\in \widetilde{C}(\widetilde{L})$. By spreading out the family $\pi: A\to \widetilde{C}$ there is an explicit finite set $S$ of places of $\widetilde{L}$ such that, for each such $\widetilde{P}$, the corresponding abelian variety $A_{\widetilde{P}}/\widetilde{L}$ has good reduction outside $S$ (and is by hypothesis of $\GL_2$-type over $\widetilde{L}$). The corollary now follows from Theorem \ref{upper bound on the number of GL2-type abelian varieties}.\footnote{One also has to bound the number of times a given $B/\widetilde{L}$ can occur as a fibre of the nonisotrivial family $\pi: A\to \widetilde{C}$ --- such a bound follows via Zarhin's trick applied to the generic fibre of $(A\times A^*)^{\times 4}\to \widetilde{C}$, whence said multiplicity is explicitly bounded in terms of the degree of the corresponding compactified map $\widetilde{C}\to \overline{A}_g$ with $g := 8\cdot \dim_{\widetilde{C}}{A}$ (since by Theorem \ref{the endomorphism ring is one of an explicit finite set of possibilities} the endomorphism ring of $(A_{\widetilde{P}}\times A_{\widetilde{P}}^*)^{\times 4}\sim_{\widetilde{L}} A_{\widetilde{P}}^{\times 8}$ is one of an explicit finite set of possibilities, it is a simple matter to estimate the number of direct factors of a relevant $(B\times B^*)^{\times 4}/\widetilde{L}$).}
\end{proof}

\section{Sketch of proof of Corollary \ref{an explicit form of a theorem of dimitrov}.}

\begin{proof}[Sketch of proof of Theorem \ref{an explicit form of a theorem of dimitrov}.]
Over $F$, Lemma $3.9$ of our \cite{my-first-effective-mordell-paper} amounts to an effectivization of Propositions $3.1$ and $3.5$ in Dimitrov's \cite{dimitrov}. An effective form of his Proposition $3.8$ then follows from his arguments using his Theorem $3.7$ (which is standard and due to Dickson). This gives the first two parts of the theorem.

To prove the third we must also effectivize his Proposition $3.17$ (from which an effective form of its Corollary $3.18$ follows). However the technique is precisely the same as in our proof of Lemma $3.9$ of our \cite{my-first-effective-mordell-paper}: where Dimitrov uses (using his notation in the proof of his Proposition $3.17$ and slightly paraphrasing) the argument "$c(f_\tau, v)\equiv \eps(v) c(f,v)\pmod*{\mathcal{P}}$ for infinitely many $\mathcal{P}$ implies $c(f_\tau, v) = \eps(v) c(f,v)$", we substitute our use of Faltings-Serre sets --- i.e.\ require such a congruence at only an explicit finite set of primes $v$ of the Galois closure of $F$ --- to obtain an explicit estimate.
\end{proof}

\appendix
\section{Finiteness of isogeny classes of (fake) elliptic curves.\label{the appendix}}

We are not sure if the following is interesting, but in case it is let us quickly explain why the arguments we used to prove Theorem \ref{a priori isogeny estimate for GL2-type abelian varieties} give a simple proof of the following.

\begin{thm}\label{finiteness of isogeny classes of fake elliptic curves}
Let $K/\Q$ be a number field. Let $A/K$ be either an elliptic curve or a fake elliptic curve.\footnote{$A/K$ is a "fake elliptic curve" if and only if it is an abelian surface $A/K$ admitting $D\inj \End_K^0(A)$ with $D/\Q$ a nonsplit quaternion algebra over $\Q$.} Then: there are only finitely many $B/K$ with $A\sim_K B$.
\end{thm}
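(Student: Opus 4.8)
The plan is to mimic the proof of Theorem \ref{a priori isogeny estimate for GL2-type abelian varieties}, but reasoning only about Galois representations and dispensing with the effectivity bookkeeping (so that all the ``$\ll 1$'' estimates may be replaced by ``finite''). Fix $A/K$ as in the statement. First I would reduce to the case where $A/K$ is split semistable with $\End_K(A)=\End_{\Qbar}(A)$ and with $\End_K(A)$ a \emph{maximal} order, using Lemma \ref{everything happens over an explicit finite extension} (passing to a finite extension $K'/K$ changes the isogeny class by a controlled amount --- and the descent argument at the start of the proof of Theorem \ref{a priori isogeny estimate for GL2-type abelian varieties} shows finiteness over $K'$ implies finiteness over $K$), then the Serre tensor product construction of Proposition \ref{everybody is isogenous to someone with maximal endomorphisms} together with Lemma \ref{reversal of arrows}. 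So without loss of generality $\End_K(A)=\o$ is either $\Z$ (elliptic curve case) or a maximal order in a nonsplit quaternion algebra $D/\Q$ (fake elliptic curve case), and we must show the set of $B/K$ with $B\sim_K A$ is finite.

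The key point is that, exactly as in the proof of Theorem \ref{a priori isogeny estimate for GL2-type abelian varieties}, every $K$-isogeny $\phi\colon A\to B$ has kernel $G=\bigoplus_p\widetilde G_p$ whose $p$-part, after enlarging by the $\o_p^{\opp}$-action (for all $p$, using Faltings' proof of the Tate conjecture and Nakayama in place of the explicit Lemmas \ref{large prime lemma} and \ref{small prime lemma}, to see $\Z_p[\rho_{A,p}(\Gal(\Qbar/K))]=\o_p^{\opp}$ --- note $A$ is automatically non-CM here since $\dim A\le 2$ and $\End_K^0(A)$ is $\Q$ or a quaternion algebra over $\Q$), becomes an $\o_p^{\opp}$-submodule $\Gamma_p\subseteq T_p(A)$; since $\o_p$ is a finite product of (possibly noncommutative) principal ideal domains, $\Gamma_p=\alpha_p\cdot T_p(A)$ for some $\alpha_p$, so $G_p=\ker(\beta_p\curvearrowright A[p^\infty])$ for $\beta_p$ with $\beta_p\alpha_p=p^{N_p}$, well-defined up to $\GL_1(\o_p)$ on the left (here $n=1$, so $\GL_n=\GL_1$). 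Assembling the class $(\beta_p)_p$ in $\left(\prod_p\GL_1(\o\otimes_\Z\Z_p)\right)\backslash\GL_1(\o\otimes_\Z\A_{\Q}^{\mathrm{fin}})/\GL_1(\o\otimes_\Z\Q)$ --- a finite set, being essentially a class group of $\o$ --- we conclude there is a $\gamma\in\o=\End_K(A)$ with $\ker\gamma\supseteq G$ and $[\ker\gamma:G]$ bounded by one of finitely many possibilities; hence $B=A/G$ is a quotient of $A/\ker\gamma$ by one of finitely many subgroups, so there are only finitely many such $B$ up to $K$-isomorphism. (One also uses Faltings' finiteness of isogeny classes is \emph{not} needed --- that is the point of the appendix --- but one does use his Tate conjecture for the endomorphism statement.)

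The main obstacle, as in the body of the paper, is controlling the endomorphism ring a priori: here it is trivial, since for $\dim A\le 2$ the Albert classification plus the hypothesis that $A$ is an elliptic or fake elliptic curve pins $\End_K^0(A)$ down to $\Q$ or the prescribed quaternion algebra $D/\Q$, so Proposition \ref{the endomorphism algebra is one of an explicit finite set of possibilities} is not needed. The only real content is the $\o_p^{\opp}$-invariance of the isogeny kernel, which rests on $\Z_p[\rho_{A,p}(\Gal(\Qbar/K))]=\o_p^{\opp}$ for \emph{every} $p$ --- and this is where one genuinely invokes Faltings (the Tate conjecture, plus Serre's/Tate's open-image-type arguments), but crucially \emph{not} the finiteness of isogeny classes, which is what makes the argument non-circular. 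I would also need a small remark in the fake elliptic curve case that the relevant module theory over a maximal order in a ramified quaternion algebra over $\Q_p$ works as in the paper (torsion-free finitely generated modules are free, Jacobson normal form), which is exactly the footnote-level discussion already appearing in the proof of Theorem \ref{a priori isogeny estimate for GL2-type abelian varieties}.
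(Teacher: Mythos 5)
Your skeleton --- mimicking the lattice/kernel argument of Theorem \ref{a priori isogeny estimate for GL2-type abelian varieties}, with the endomorphism-algebra control now trivial --- is indeed the paper's route, but your central step is wrong as stated, and the error sits exactly where the appendix has its only real content. You claim that Faltings' Tate conjecture plus Nakayama give $\Z_p[\rho_{A,p}(\Gal(\Qbar/K))]=\o_p^{\opp}$ for \emph{every} $p$. That is false at small primes: for an elliptic curve with a $K$-rational $p$-isogeny the mod-$p$ image lies in a Borel, so $\Z_p[\rho_{A,p}(\Gal(\Qbar/K))]$ is a proper subalgebra of $M_2(\Z_p)$. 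And even at large $p$ the Tate conjecture only gives the rational statement $\Q_p[\rho_{A,p}(\Gal(\Qbar/K))]=D_p^{\opp}$ (semisimplicity plus double centralizer); Nakayama upgrades a \emph{residual} big-image statement to the integral one, and that residual statement for almost all $p$ is not a formal consequence of the Tate conjecture --- it is precisely the content of Lemma \ref{irreducibility after some point} (equivalently, of an open-image theorem). What the kernel-enlarging argument actually needs is equality for almost all $p$ together with finite index at the remaining finitely many $p$, so that $\prod_p[G_p:\widetilde{G}_p]<\infty$ uniformly in the isogeny; the paper gets the finite-index part by a compactness remark and the almost-all-$p$ part by rerunning the proof of Lemma \ref{irreducibility after some point}, with Theorem \ref{tate conjecture for endomorphisms of fake elliptic curves} --- proved by repeating an argument of Serre (characters of $p$-divisible subquotients are algebraic Hecke characters, Weil bounds, connected-\'{e}tale sequence and Serre--Tate), with no input from Faltings --- supplying the irreducibility at the auxiliary prime. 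Your parenthetical ``plus Serre's/Tate's open-image-type arguments'' is where the whole difficulty lives, and leaving it as a black box is a genuine gap.

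Two further points. The purpose of the appendix (stated in the introduction) is to prove this finiteness \emph{by reasoning only about Galois representations}, i.e.\ without Faltings: the statement itself is a special case of Faltings' Finiteness I, and the standard integral/residual refinements one would otherwise quote (Zarhin's theorem, cf.\ the paper's own footnote) rely on Faltings' finiteness of isogeny classes, so a proof quoting Faltings' Tate conjecture forfeits the only interest of the theorem even if, with carefully chosen references (Serre for elliptic curves, Ohta for fake elliptic curves), it is not literally circular. Finally, your remark that $A$ is ``automatically non-CM'' is incorrect: elliptic curves may have CM and fake elliptic curves may be potentially CM, and Theorem \ref{tate conjecture for endomorphisms of fake elliptic curves} excludes exactly that case; the CM case needs its own (classical and easy) finiteness argument, parallel to the CM branch in the proof of Theorem \ref{a priori isogeny estimate for GL2-type abelian varieties}.
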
\noindent
The small primes analysis of Proposition \ref{small prime lemma} may even e.g.\ be replaced by a compactness argument.

In any case, given Lemma \ref{faltings' lemma}, in order to repeat the proof of Lemma \ref{irreducibility after some point} the only thing to show is the following, which we do by repeating an argument of Serre.

\begin{thm}\label{tate conjecture for endomorphisms of fake elliptic curves}
Let $K/\Q$ be a number field. Let $A/K$ be either a non-potentially-CM elliptic curve or a non-potentially-CM fake elliptic curve. Then: there is a $C_{K,A}\in \Z^+$ such that for all $p\geq C_{K,A}$ we have that $\Q_p[\rho_{A,p}(\Gal(\Qbar/K))] = M_2(\Q_p)$.
\end{thm}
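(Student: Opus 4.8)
The plan is to prove Theorem \ref{tate conjecture for endomorphisms of fake elliptic curves} by following the classical Serre-style argument that deduces ``big $\ell$-adic image'' for non-CM elliptic curves (and its quaternionic analogue), upgraded to an effective statement via the input of Lemma \ref{faltings' lemma}. First I would reduce to showing that $\rhobar_{A,p}\colon \Gal(\Qbar/K)\to \GL_2(\F_p)$ is absolutely irreducible and has image not contained in a normalizer of a Cartan for all $p\geq C_{K,A}$, together with the observation that then $\F_p[\rhobar_{A,p}(\Gal(\Qbar/K))] = M_2(\F_p)$ (because by the Dickson classification an absolutely irreducible subgroup of $\GL_2(\F_p)$ that is not contained in a normalizer of a Cartan and has order divisible by $p$, or more simply one whose projective image is $A_4$, $S_4$, $A_5$, or contains $\PSL_2(\F_p)$, spans $M_2(\F_p)$ — the only subgroups spanning a proper subalgebra are the reducible ones and those in a normalizer of a Cartan). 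Granting $\F_p[\rhobar_{A,p}(\Gal(\Qbar/K))] = M_2(\F_p)$, Nakayama (exactly as in the second statement of Lemma \ref{faltings' lemma}) gives $\Z_p[\rho_{A,p}(\Gal(\Qbar/K))] = M_2(\Z_p)$, and tensoring up to $\Q_p$ yields the claim; in the fake-elliptic-curve case one argues identically after noting $D\otimes_\Q\Q_p\simeq M_2(\Q_p)$ for all but finitely many $p$ and identifying the maximal order with $M_2(\Z_p)$ up to conjugacy.

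Next I would establish the two mod-$p$ statements effectively. For absolute irreducibility I would repeat verbatim the sketch given for Lemma \ref{irreducibility after some point}: were $\rhobar_{A,p}$ reducible (or dihedral), its semisimplification is a sum of characters $\chibar,\chibar'$ whose inertial restrictions at primes above $p$ are reductions of characters of CM $p$-divisible groups (by Raynaud/Fontaine-Laffaille, since $A$ has good reduction at primes away from a fixed finite set and semistable reduction elsewhere — here one uses the Tate-curve description at bad primes exactly as in Lemma \ref{description of the inertial representation of a GL2-type abelian variety with bad reduction}); using Artin reciprocity and height bounds on generators of $\o_K^\times$, for $p$ huge the relevant congruences of global units become equalities, so $\chibar,\chibar'$ lift to algebraic Hecke characters $\chi,\chi'$ of bounded conductor; then by Lemma \ref{faltings' lemma} applied at a Faltings-Serre set with respect to an auxiliary prime $\ell$, the congruence $\tr\rho_{A,\ell}\equiv\chi+\chi'$ becomes an equality, forcing $\rho_{A,\ell}$ to be reducible, which contradicts Faltings' proof of the Tate conjecture (using that $A$ is not potentially CM, so $\End^0_{\Qbar}(A)$ is $\Q$ or the quaternion algebra $D$ and $\rho_{A,\ell}$ has no $\Qbar_\ell$-rational sub). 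The exclusion of image in a normalizer of a Cartan is handled the same way: such image makes $\rhobar_{A,p}\vert_{\Gal(\Qbar/L)}$ reducible for an explicit quadratic (or biquadratic) $L/K$ unramified outside the bad set, and the same lifting-plus-Faltings-Serre argument over $L$ gives a contradiction for $p$ explicitly large.

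The main obstacle I expect is keeping the argument genuinely \emph{effective} at the step where one lifts the residual characters $\chibar,\chibar'$ to algebraic Hecke characters: this requires (i) controlling the inertial restrictions at primes above $p$, which for good reduction is Raynaud/Fontaine-Laffaille (clean for $p$ large) and for bad reduction is the explicit Tate-uniformization computation of Lemma \ref{description of the inertial representation of a GL2-type abelian variety with bad reduction}, and (ii) turning congruences modulo a huge prime into equalities using explicit height bounds on a generating set of $\o_K^\times$ and on the algebraic numbers appearing — i.e.\ Minkowski/Hermite-Minkowski bounds and Weil's Riemann-hypothesis bound $|a_\qfrak|_v\leq 2\sqrt{\Nm\,\qfrak}$. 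Once these bounds are in hand the rest is bookkeeping. As noted after the statement, in this elliptic / fake-elliptic setting one can alternatively avoid any delicate small-prime analysis entirely: the residual representations of bounded level form a ``compact'' family (finitely many isomorphism classes of Galois representations into $\GL_2$ of a fixed finite ring, unramified outside a fixed finite set, up to the constraints coming from good/semistable reduction), so one gets the finitely-many-bad-primes conclusion by a soft compactness argument rather than by quoting an external isogeny estimate — and this is precisely what replaces Proposition \ref{small prime lemma} in the appendix.
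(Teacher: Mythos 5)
Your proposal does prove the literal statement, but by a genuinely different and much heavier route than the paper, and the difference is worth spelling out. The paper's proof is Serre's direct argument on the $p$-adic representation itself: if $\Q_p[\rho_{A,p}(\Gal(\Qbar/K))]$ were a proper subalgebra of $M_2(\Q_p)$, then $(\rho_{A,p}\otimes\Qbar_p)^{\mathrm{s.s.}}\simeq\alpha\oplus\beta$ with $\alpha,\beta$ characters which arise from $p$-divisible groups (being subquotients), hence are $p$-adic realizations of algebraic Hecke characters; by Brauer--Nesbitt and compatibility one may move to a well-chosen prime so that $0\to\alpha\to\rho_{A,p}\to\beta\to 0$ with $\Z_p$-valued characters, and then locally at $\qfrak\mid(p)$ either $\beta$ is unramified everywhere above $p$ --- a weight-zero character, contradicting the Weil bounds --- or the connected--\'etale sequence splits and Serre--Tate forces $A/K_\qfrak$ to be CM, contradicting the non-potentially-CM hypothesis. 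No residual analysis, no Faltings--Serre sets, no effectivity machinery, and, crucially, no input from Faltings. Your route instead establishes residual absolute irreducibility for $p$ large by rerunning the machinery of Lemma \ref{irreducibility after some point} and then concludes by Nakayama; that part is fine in outline (and note that absolute irreducibility alone already forces $\F_p[\rhobar_{A,p}(\Gal(\Qbar/K))]=M_2(\F_p)$ by Burnside, so the exclusion of normalizers of Cartans is unnecessary --- indeed your parenthetical claim that subgroups of such normalizers span a proper subalgebra is false, though harmlessly so here).

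The substantive objection is your final contradiction, which appeals to Faltings' proof of the Tate conjecture to know that $\rho_{A,\ell}$ is irreducible. Once you permit yourself Faltings' semisimplicity plus the Tate conjecture for $A$, the theorem is immediate for every $p$ prime to the discriminant of $D$: the commutant of the image is $\End_K^0(A)\otimes\Q_p$, and the double centralizer theorem gives $\Q_p[\rho_{A,p}(\Gal(\Qbar/K))]=M_2(\Q_p)$ outright, so the entire residual apparatus buys nothing for this statement. Worse, within this appendix the appeal is self-defeating and essentially circular in spirit: Theorem \ref{tate conjecture for endomorphisms of fake elliptic curves} is stated precisely to supply, \emph{without} Faltings, the $\ell$-adic irreducibility input needed to run Lemma \ref{irreducibility after some point} for (fake) elliptic curves, so that finiteness of isogeny classes (Theorem \ref{finiteness of isogeny classes of fake elliptic curves}) is obtained by reasoning only about Galois representations. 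To make your argument serve that purpose, replace the appeal to Faltings by the Serre-style local dichotomy above (Weil bound in the everywhere-unramified case, Serre--Tate CM in the split connected--\'etale case), at which point you will have essentially reproduced the paper's proof and can discard the residual detour.
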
\noindent
(In case $A/K$ is a fake elliptic curve we are implicitly taking $C_{K,A}$ so large that $p\geq C_{K,A}$ implies that $\End_K^0(A)\otimes_{\Q} \Q_p\simeq M_2(\Q_p)$, whence we may write $\rho_{A,p}: \Gal(\Qbar/K)\to \GL_2(\Z_p)$.)

Since the argument is essentially Serre's we will be quite brief.

\begin{proof}[Sketch of proof of Theorem \ref{tate conjecture for endomorphisms of fake elliptic curves}.]
Suppose otherwise. Then $\Q_p[\rho_{A,p}(\Gal(\Qbar/K))]\subseteq M_2(\Q_p)$ is abelian. Thus $(\rho_{A,p}\otimes_{\Z_p} \Qbar_p)^\mathrm{s.s.}\simeq \alpha\oplus \beta$ with $\alpha, \beta: \Gal(\Qbar/K)\to \Qbar_p^\times$ characters. Said characters automatically arise from $p$-divisible groups since they are subquotients of $\rho_{A,p}\otimes_{\Z_p} \Qbar_p$, so they are $p$-adic realizations of algebraic Hecke characters. Thus by Brauer-Nesbitt we see that $(\rho_{A,\ell}\otimes_{\Z_\ell} \Qbar_\ell)^{\mathrm{s.s.}}\simeq \alpha\oplus \beta$ (where now $\alpha$ and $\beta$ denote the corresponding $\ell$-adic realizations) for all large $\ell$, so without loss of generality by changing $p$ we may assume that $\alpha, \beta: \Gal(\Qbar/K)\to \Z_p^\times$ and $0\to \alpha\to \rho_{A,p}\to \beta\to 0$. Now considering the inertial restrictions of $\alpha$ and $\beta$ at primes above $p$ we see that either $\beta\vert_{I_\qfrak} = \triv$ for all $\qfrak\vert (p)$ and thus $|\beta(\mfrak)| = 1$ for all large primes $\mfrak\subseteq \o_K$, contradicting Weil, or else for some $\qfrak\vert (p)$ the sequence $0\to \alpha\vert_{I_\qfrak}\to \rho_{A,p}\vert_{I_\qfrak}\to \beta\vert_{I_\qfrak}\to 0$ splits (via the connected-\'{e}tale sequence), whence by Serre-Tate $A/K_\qfrak$ is CM, another contradiction.
\end{proof}

\renewcommand{\refname}{References.}

\bibliographystyle{plain}

\bibliography{unpeudeffectivitepourlesvarietesmodulairesdehilbertblumenthal}

\end{document}